\DeclareMathOperator*{\Int}{int}
\DeclareMathOperator*{\relint}{relint}
\DeclareMathOperator*{\conv}{conv}
\newtheorem{thm}{Theorem}[section]
\newtheorem{cor}[thm]{Corollary}
\newtheorem{con}[thm]{Conjecture}
\newtheorem{lem}[thm]{Lemma}
\theoremstyle{definition}
\newtheorem{defn}[thm]{Definition}
\newtheorem{prob}{Problem}
\theoremstyle{remark}
\newtheorem{rem}[thm]{Remark}
\begin{document}

\title{On the multiple illumination numbers of convex bodies}

\author{Kirati Sriamorn\footnote{kirati.s@chula.ac.th}}

\affil{Department of Mathematics and Computer Science, Faculty of Science, Chulalongkorn University}

\maketitle

\begin{abstract}
  In this paper, we introduce an $m$-fold illumination number $I^m(K)$ of a convex body $K$ in Euclidean space $\mathbb{E}^d$, which is the smallest number of directions required to $m$-fold illuminate $K$, i.e., each point on the boundary of $K$ is illuminated by at least $m$ directions. We get a lower bound of $I^m(K)$ for any $d$-dimensional convex body $K$, and get an upper bound of $I^m(\mathbb{B}^d)$, where $\mathbb{B}^d$ is a $d$-dimensional unit ball. We also prove that $I^m(K)=2m+1$, for a $2$-dimensional smooth convex body $K$. Furthermore, we obtain some results related to the $m$-fold illumination numbers of convex polygons and cap bodies of $\mathbb{B}^d$ in small dimensions. In particular, we show that $I^m(P)=\left\lceil mn/{\left\lfloor\frac{n-1}{2}\right\rfloor}\right\rceil$, for a regular convex $n$-sided polygon $P$.
\end{abstract}

\bigskip

\textbf{Keywords} Multiple illumination $\cdot$ Illumination number$\cdot$ Hadwiger conjecture $\cdot$  Smooth boundary $\cdot$ Cap body $\cdot$ Spiky body $\cdot$ Regular convex polygon

\medskip

\textbf{Mathematics Subject Classification} 52A20 $\cdot$  52A55 $\cdot$ 52C17

\section{Introduction}
Consider a convex body $K$ in $d$-dimensional Euclidean space $\mathbb{E}^d$, a point $p$ on its boundary $\partial K$, and a point $u$ on the origin-centered $(d-1)$-unit sphere $\mathbb{S}^{d-1}$. We say that $u$ illuminates $p$ with respect to $K$ if and only if there is a point $q$ in the interior $\Int(K)$ of $K$, such that $q=p+\lambda u$ for some positive number $\lambda$. For a positive integer $m$, a convex body $K$ is $m$-fold illuminated by a multiset $U$ of directions in $\mathbb{S}^{d-1}$, if each point in $\partial K$ is illuminated by at least $m$ directions from $U$, with respect to $K$. The $m$-fold illumination number $I^m(K)$ of $K$ is the smallest number of directions required to $m$-fold illuminate $K$.

For $m=1$, it is clear that $I^1(K)$ is the ordinary illumination number $I(K)$ which was first given by Boltyanskii\cite{Boltyanskii}. The following conjecture is well-known.

\begin{con}
	(Illumination conjecture) The illumination number $I(K)$ of any $d$-dimensional convex body $K$ does not exceed $2^d$. Moreover, $I(K))=2^d$ if and only if $K$ is an affine image of a $d$-cube.
\end{con}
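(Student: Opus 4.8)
The statement written here is exactly the Boltyanski--Hadwiger \emph{illumination conjecture}, one of the principal open problems of discrete geometry; it has been proved unconditionally only in dimension $d=2$, and in higher dimensions only for restricted classes of bodies (zonoids, bodies of constant width, belt bodies, certain cap bodies, bodies with enough symmetry, and so on), while the best general upper bounds on $I(K)$ remain exponential and strictly larger than $2^d$. I therefore would not attempt a full proof. What I describe below is the line I would follow for the part that is genuinely provable, namely $d=2$, together with the point at which it refuses to generalize.

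\textbf{Reformulation.} First I would pass from illumination to covering by means of Boltyanski's classical equivalence: for a $d$-dimensional convex body $K$, $I(K)$ equals the least number of translates of the open set $\Int K$ whose union covers $K$, equivalently the least number of homothets $\lambda_i K+t_i$ with $0<\lambda_i<1$ covering $K$. Under this dictionary the conjectured bound $2^d$ becomes a purely covering statement, and the extremal case is transparent for the cube $K=[0,1]^d$: the $2^d$ translates of $\tfrac12 K$ centred at the vertices cover $K$, one checks that $2^d-1$ never do, and the remaining content of the conjecture is that no body other than an affine image of the cube forces $2^d$.

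\textbf{The planar case.} For $d=2$ I would first record the lower bound $I(K)\ge 3$: roughly, the set of outward normal directions at the boundary points illuminated by a single direction $u$ is an open half-circle of $\mathbb S^1$, and two open half-circles cannot cover $\mathbb S^1$ (they miss a closed arc, or a single point in the antipodal case), so two directions never suffice. For the upper bound I would circumscribe about $K$ a parallelogram $P$ touching $\partial K$ at the midpoints of its four sides, which always exists by a rotation/continuity argument; if $K=P$ then the four directions along the two diagonals of $P$ are both necessary and sufficient, giving $I(K)=4$ and identifying the parallelogram as extremal. If $K\neq P$, a short combinatorial argument (Levi) shows that $\partial K$ splits into three arcs, each contained in the illuminated region of one suitably chosen direction, so $I(K)=3$. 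Together these give $I(K)\le 4$ with equality precisely for parallelograms, that is, for affine images of the $2$-cube.

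\textbf{Main obstacle.} The difficulty lies entirely in $d\ge 3$. The planar argument rests on two-dimensional phenomena with no robust higher-dimensional replacement: circumscribed parallelograms whose side-midpoints lie on the body, the clean decomposition of a one-dimensional boundary into finitely many ``one-direction'' arcs, and the trivial bound $I(K)\ge 3$. In $\mathbb E^d$ the boundary is $(d-1)$-dimensional with far richer facial structure; producing \emph{any} covering of a general convex body by $2^d$ translates of its interior is itself open, and even the characterization of the cube as the unique extremizer is unsettled already in $d=3$. Realistically I would expect progress only on structured subclasses --- which is exactly the setting (smooth bodies, cap bodies, regular polygons) in which the present paper develops the $m$-fold analogue --- rather than on the conjecture in its stated generality.
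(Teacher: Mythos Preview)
Your assessment is correct: the statement is labeled in the paper as a \emph{conjecture}, not a theorem, and the paper offers no proof of it. The surrounding text merely surveys known partial results (the planar case, Lassak's centrally symmetric $d=3$ result, Papadoperakis, Prymak, Rogers, Huang et al.) before introducing the $m$-fold analogue. So there is no ``paper's own proof'' to compare your proposal against.

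Given that, your write-up is exactly the right response: you correctly flag the conjecture as open for $d\ge 3$, give the Boltyanski covering reformulation, sketch the genuine $d=2$ argument (Levi's three-direction illumination for non-parallelograms, four for parallelograms), and explain why the planar mechanism does not lift. Nothing to correct.
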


The illumination problem has been completely solved only in $\mathbb{E}^2$ (\cite{Boltyanskii},\cite{Gohberg},\cite{Hadwiger} and \cite{Levi}). For three- dimensional space $\mathbb{E}^3$, Lassak\cite{Lassak} proved that the conjecture is true for any centrally symmetric convex body $K$. For the non-symmetric case, the best result is due to Papadoperakis's work\cite{Papadoperakis}:  $I(K)\leq 16$, for any three-dimensional convex body $K$. In higher dimensions,  Prymak\cite{Prymak} recently showed that for any convex body $K$ in $ \mathbb{E}^4, \mathbb{E}^5$, and $\mathbb{E}^6$, we have $I(K)\leq 96$, $I(K)\leq 1091$, and $I(K)\leq 15373$, respectively. For a long time ago, the best general estimate for $d$-dimensional space was due to Rogers\cite{Rogers}:
$$I(K)\leq\binom{2d}{d}(\ln d+\ln\ln d+5)=O(4^d\sqrt{d}\ln d).$$ 
In 2018, Huang\cite{Huang} and his collaborations improved this bound to $$I(K)\leq c_14^de^{-c_2\sqrt{d}}.$$
for some universal constants $c_1$ and $c_2$. 

For a positive integer $m$ and a convex body $K$. One can see that if $K$ is ($1$-fold) illuminated by $\{u_1,.\ldots,u_n\}\subset \mathbb{S}^{d-1}$, then $K$ is $m$-fold illuminated by  the multiset $\{m\cdot u_1,\ldots, m\cdot u_n\}$, where $m$ is the multiplicity of $u_i$ for $i=1,\ldots,n$. It follows that $$I^m(K)\leq m\cdot I(K).$$ 
In general, we have
$$I^{m+n}(K)\leq I^m(K)+I^n(K).$$
It is easy to see that $I^m(K)=2^dm$, when $K$ is an affine image of a $d$-cube. Analogous to the illumination conjecture, we have the following conjecture.
\begin{con}
	For a positive integer $m$ and a $d$-dimensional convex body $K$,
	$$I^m(K)\leq 2^dm$$
	and $I^m(K)=2^dm$ if and only if $K$ is an affine image of a $d$-cube.
\end{con}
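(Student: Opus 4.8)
We must establish two things: the bound $I^m(K)\le 2^dm$ for every $d$-dimensional convex body $K$, and the characterisation of the equality case. The ``if'' half of the latter is already recorded in the discussion above --- an affine $d$-cube is $m$-fold illuminated by the $2^d$ directions pointing from its vertices to its centre, each taken with multiplicity $m$, and no smaller multiset works --- so the real content is the upper bound together with the ``only if'' implication: $I^m(K)=2^dm$ forces $K$ to be an affine image of a $d$-cube.

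For the upper bound the plan is to build an $m$-fold illuminating multiset directly rather than through $I^m(K)\le m\,I(K)$, which throws away the slack that multiplicity provides --- for a smooth planar body, for instance, $I^m(K)=2m+1\ll 3m=m\,I(K)$. Fix a finite optimal fractional illumination: directions $u_1,\dots,u_k\in\mathbb{S}^{d-1}$ and weights $w_i>0$ with $\sum_i w_i=I^{*}(K)$ (the fractional illumination number) such that every $p\in\partial K$ is illuminated by directions of total weight at least $1$. Since each $u_i$ illuminates an open subset of $\partial K$, the boundary is partitioned into finitely many cells on which the set of illuminating directions is constant, and on this finite system $m$-fold illumination is the integer covering program of minimising $\sum_i n_i$ subject to $\sum_{i\,:\,u_i\text{ illuminates }C}n_i\ge m$ for every cell $C$; its optimum is $I^m(K)$. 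Rounding the scaled fractional solution $m w_i$ to integers --- randomised rounding over the finitely many cells, say --- gives $I^m(K)\le(1+o_m(1))\,m\,I^{*}(K)$, hence $I^m(K)/m\to I^{*}(K)$, so the integer bound $I^m(K)\le 2^dm$ reduces to the fractional bound $I^{*}(K)\le 2^d$; turning this into the exact inequality $2^dm$ (not $(1+o_m(1))2^dm$) for the borderline bodies with $I^{*}(K)=2^d$ is the delicate point.

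For the ``only if'' direction, suppose $U=\{u_1,\dots,u_N\}$ is a multiset of size $N=2^dm$ that $m$-fold illuminates $K$. Consider the multiplicity function $f\colon\partial K\to\mathbb{Z}_{\ge 0}$ given by $f(p)=\#\{j:u_j\text{ illuminates }p\}$; then $f\ge m$ everywhere while $N=2^dm$. Integrating $f$ against a density on $\partial K$ calibrated so that each direction contributes exactly $1$ in the cube case, and invoking the general lower bound for $I^m(K)$ established in this paper, one should force $f=m$ almost everywhere and, beyond that, pin the support of $U$ to exactly $2^d$ distinct directions, each of multiplicity $m$, in a position affinely equivalent to the $2^d$ vertex-to-centre directions of a cube. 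A shadow-boundary argument --- of the kind used in the resolved planar and centrally symmetric cases of the illumination conjecture --- then shows that $\partial K$ breaks up into $2^d$ faces with the face lattice of a cube, so $K$ is an affine $d$-cube.

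The unavoidable obstacle is that the case $m=1$ of this statement is precisely the illumination conjecture, equality case included, which is open for $d\ge 3$ in general; consequently neither the upper bound nor the ``only if'' direction can be completed unconditionally in all dimensions with current techniques. What the program does yield unconditionally is the conjecture in full in every dimension where the illumination conjecture and its equality case are known --- all convex bodies when $d=2$, and centrally symmetric bodies when $d=3$ --- by combining $I^m(K)\le m\,I(K)$ with the extremal analysis above, together with the weaker general estimate $I^m(K)\le\bigl(c_1 4^d e^{-c_2\sqrt{d}}\bigr)m$ coming from the bound of Huang et al.; and the new conditional statement within reach is the sharp bound $I^m(K)\le 2^dm$ for all $m$, contingent only on the fractional illumination bound $I^{*}(K)\le 2^d$.
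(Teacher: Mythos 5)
The statement you are trying to prove is labelled a \emph{conjecture} in the paper, and the paper offers no proof of it: for $m=1$ it is exactly the Hadwiger--Boltyanskii illumination conjecture (equality case included), open for all $d\ge 3$. You correctly diagnose this at the end, which is to your credit, but it means your text is a research program rather than a proof, and it should not be presented as a ``proof proposal'' for the statement. The only parts that are actually complete are the ``if'' half of the equality case (an affine $d$-cube needs exactly $2^dm$ directions, since no direction illuminates two vertices and each vertex needs $m$) and the observation $I^m(K)\le m\,I(K)$, both of which the paper already records.

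Beyond that, each step of your program has a concrete gap. First, the reduction of $I^m(K)\le 2^dm$ to the fractional bound $I^{*}(K)\le 2^d$ does not close the problem: the fractional illumination conjecture $I^{*}(K)\le 2^d$ is itself open for general convex bodies (it is known for centrally symmetric bodies, while the general fractional bound is of order $\binom{2d}{d}$), so you have traded one open conjecture for another. Second, even granting $I^{*}(K)\le 2^d$, your rounding only yields $I^m(K)\le(1+o_m(1))\,2^dm$; you name the loss of the $(1+o_m(1))$ factor as ``the delicate point'' but supply no mechanism to remove it, and for small $m$ (in particular $m=1$) the asymptotic statement says nothing. Third, the ``only if'' direction is entirely schematic --- ``one should force $f=m$ almost everywhere,'' ``a shadow-boundary argument then shows'' --- with no construction of the calibrating density and no argument that equality in an integral inequality pins down the combinatorics of $\partial K$; even in the settled cases ($d=2$, and centrally symmetric $d=3$) the equality analysis is nontrivial and is not carried out here. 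The one genuinely useful observation you make --- that $I^m(K)/m$ converges to the fractional illumination number, so multiple illumination is governed by the LP relaxation for large $m$ --- is consistent with the paper's Theorem~\ref{main_smooth_two_dim} and would be worth developing on its own, but it does not prove the conjecture.
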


Denote by $\partial K$ and $\Int(K)$ the boundary and the interior of $K$, respectively. Hadwiger\cite{Hadwiger} has offered a point source interpretation of the illumination problem. Let $q$ be a point on the boundary $\partial K$, and let $p\in\mathbb{E}^d\setminus K$. We say that the point $p$ illuminates $q$ with respect to $K$, if the ray $pq$ intersects with the interior $\Int(K)$, and the line segment $\overline{pq}$ does not intersect $\Int(K)$. Denote by $I_p(K)$ the smallest number of point sources that completely illuminate $\partial K$. The illumination problems are also related to the covering problems proposed by Hadwiger, Markus and Gohberg. Hadwiger\cite{Hadwiger_cover} studied the smallest number of translates of $\Int(K)$ required to cover $K$, denote by $C(K)$. Later, Markus and Gohberg\cite{Gohberg} studied the smallest number of smaller homothetic copies of $K$ required to cover $K$, denote by $C_h(K)$. It is well-known that\cite{Boltyanskii2}
$$I(K)=I_p(K)=C(K)=C_h(K),$$
for any convex body $K$. Similar to $I^m(K)$, one can give the definitions of $I_p^m(K), C^m(K)$, and $C_h^m(K)$, respectively. We call $C^m(K)$ the $m$-fold covering number of $K$. Then we also have
$$I^m(K)=I_p^m(K)=C^m(K)=C_h^m(K).$$

Let $\mathbb{B}^d$ be the $d$-dimensional unit ball with center at the origin $o$. It is known that $I(\mathbb{B}^d)=d+1$. In fact, if the boundary of $K$ is smooth, i.e., there is a unique support hyperplane of $K$ at each boundary point of $K$, then $I(K)=d+1$\cite{Hadwiger_smooth}.

In \cite{Minkowski}, \cite{Naszodi}, \cite{Ivanov}, and \cite{Bezdek}, the authors studied the convex bodies named cap bodies and spiky balls, which are the union of convex hulls of $\mathbb{B}^d$ and a point $v\in V$, where $V$ is a finite subset of $\mathbb{E}^d\setminus\mathbb{B}^d$. We can generalize these concepts as follows.

\begin{defn}
	Let $V$ be a finite subset of $\mathbb{E}^d\setminus K$. We call the set $$\widehat{K}(V)=\bigcup_{v\in V}\conv(K\cup\{v\})$$
	a spiky body of $K$ with respect to $V$, where $\conv(\cdot)$ refers to the convex hull of the corresponding set.
\end{defn}
Note that a spiky body $\widehat{K}(V)$ may not be convex. For convenience, if $V=\{v\}$, then we define $\widehat{K}(v)=\widehat{K}(V)$.

\begin{defn}\label{def_cap_body}
	If $V$ is a countable set, and for any pair of distinct points $v_1,v_2\in V$, the line segment $\overline{v_1v_2}$ intersects $K$, then $\widehat{K}(V)$ is called a cap body of $K$ with respect to $V$.
\end{defn}

In this paper, we will prove the following results (see Section \ref{subsection_lower_bound}, \ref{subsection_smooth} and \ref{subsection_upper_bound_ball}, respectively).

\begin{thm}\label{main_lower_bound}
	Let $m$ and $d$ be positive integers, where $d\geq 2$. For any $d$-dimensional convex body $K$, we have
	$$ I^m(K)\geq 2m+(d-1).$$
\end{thm}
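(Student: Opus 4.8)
The plan is to pass from illumination to a purely spherical covering condition and then induct on the dimension. First I would record the standard observation that if the multiset $U=\{u_1,\dots,u_n\}$ $m$-fold illuminates $K$, then for every $\nu\in\mathbb{S}^{d-1}$ at least $m$ of the $u_i$ satisfy $\langle u_i,\nu\rangle<0$. Indeed, pick a boundary point $p$ maximizing $\langle\cdot,\nu\rangle$ over $K$, so that $\nu$ is an outer normal at $p$; any direction $u$ illuminating $p$ yields a point $p+\lambda u\in\Int(K)$ with $\lambda>0$, and since $\Int(K)$ lies strictly on the $\nu$-negative side of the supporting hyperplane of $K$ at $p$, we get $\lambda\langle u,\nu\rangle<0$, hence $\langle u,\nu\rangle<0$; as $p$ is $m$-fold illuminated, at least $m$ of the $u_i$ qualify. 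Thus it suffices to prove: if $u_1,\dots,u_n\in\mathbb{S}^{d-1}$ (a multiset) are such that every open half-space $\{\,x:\langle x,\nu\rangle<0\,\}$ through the origin contains at least $m$ of them, then $n\ge 2m+(d-1)$.

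I would prove this by induction on $d\ge 2$. For the base case $d=2$: choose $\nu_0\in\mathbb{S}^1$ with $\nu_0\perp u_1$. The index sets $A=\{\,i:\langle u_i,\nu_0\rangle<0\,\}$ and $B=\{\,i:\langle u_i,\nu_0\rangle>0\,\}=\{\,i:\langle u_i,-\nu_0\rangle<0\,\}$ are disjoint, each of size at least $m$ by the hypothesis applied to $\nu_0$ and to $-\nu_0$, and neither contains the index $1$ (since $\langle u_1,\nu_0\rangle=0$). Hence $n-1\ge|A|+|B|\ge 2m$, i.e. $n\ge 2m+1$.

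For the inductive step, fix the hypothesis in $\mathbb{E}^d$ with $d\ge 3$ and set $e=u_1$ and $H=e^\perp\cong\mathbb{E}^{d-1}$, identifying the unit sphere of $H$ with $\mathbb{S}^{d-2}$. Write $\bar u_i$ for the orthogonal projection of $u_i$ onto $H$; then $\langle u_i,\nu\rangle=\langle\bar u_i,\nu\rangle$ for all $\nu\in H$, and $\bar u_i=0$ precisely when $u_i=\pm e$. For any $\nu\in\mathbb{S}^{d-2}$, each of the (at least $m$) indices $i$ with $\langle u_i,\nu\rangle<0$ satisfies $\langle\bar u_i,\nu\rangle<0$, so $\bar u_i\neq 0$; discarding the zero projections (there is at least one, namely $\bar u_1=0$) and normalizing the remainder produces a multiset of at most $n-1$ unit vectors in $\mathbb{S}^{d-2}$ that still satisfies the covering hypothesis in dimension $d-1$. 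By the induction hypothesis their number is at least $2m+(d-2)$, whence $n-1\ge 2m+(d-2)$, i.e. $n\ge 2m+(d-1)$.

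The one genuinely important point --- and the step I would expect to be the crux --- is the decision to project along $e=u_1$ rather than along a generic direction: a generic hyperplane section retains all $n$ half-spaces and only gives $n\ge 2m+1$, whereas aligning $e$ with one of the directions forces that direction to contribute nothing to the section, which is exactly what produces the extra $+1$ per dimension (and, analogously, the choice $\nu_0\perp u_1$ is what upgrades the trivial bound $n\ge 2m$ to $n\ge 2m+1$ in the base case). Everything else is routine bookkeeping: checking that the projections of the ``active'' directions are nonzero, that the reduced family still satisfies the hypothesis, and that the count drops by at least one.
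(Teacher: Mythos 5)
Your proof is correct, and the key lemma you isolate (that $u$ illuminating a boundary point with outer normal $\nu$ forces $\langle u,\nu\rangle<0$, so that $m$-fold illumination is equivalent to every open linear half-space containing at least $m$ of the directions) is the same engine that drives the paper's argument. Where you differ is in how you exploit it. The paper does a one-shot count: take any $d-1$ of the directions $v_1,\dots,v_{d-1}\in U$, note that they all lie in some hyperplane $H$ through the origin, and let $H_1,H_2$ be the two support hyperplanes of $K$ parallel to $H$ with touching points $p_1,p_2$; then none of $v_1,\dots,v_{d-1}$ illuminates $p_1$ or $p_2$ (their inner products with the outer normal are zero), no single direction can illuminate both $p_1$ and $p_2$ (the outer normals are opposite), and each needs $m$ illuminators, so $|U|\ge (d-1)+2m$. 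Your version turns this into an induction on $d$ after first passing to the purely spherical statement: your base case $d=2$ (choose $\nu_0\perp u_1$, so $u_1$ lands in neither open half-space) is exactly the $d=2$ instance of the paper's argument, and each inductive step, by projecting along $e=u_1$ and discarding $\bar u_1=0$, ``spends'' one direction per dimension, which is what the paper does for all $d-1$ simultaneously. Both arguments are sound; yours is a bit longer because of the induction bookkeeping, but the explicit reformulation as a half-space covering statement about $\mathbb{S}^{d-1}$ is conceptually clean and makes the mechanism (each of $d-1$ directions can be wasted by an orthogonal choice of normal, and the remaining $2m$ come from an antipodal pair) transparent; the paper's version is shorter and stays entirely in the language of support hyperplanes of $K$.
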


\begin{thm}\label{main_smooth_two_dim}
	Let $m$ be a positive integer. Let $K$ be a $2$-dimensional convex body with smooth boundary. Then
	$$ I^m(K)= 2m+1.$$
\end{thm}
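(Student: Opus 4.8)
The plan is to prove the identity $I^m(K)=2m+1$ for a smooth planar convex body $K$ in two parts: the lower bound $I^m(K)\geq 2m+1$, which follows immediately from Theorem~\ref{main_lower_bound} by setting $d=2$, and the matching upper bound $I^m(K)\leq 2m+1$, which is where the real work lies. For the upper bound, the goal is to exhibit a multiset $U$ of $2m+1$ directions on $\mathbb{S}^1$ that $m$-fold illuminates $\partial K$.

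First I would recall the basic local picture of illumination for a smooth body: at a boundary point $p\in\partial K$ with (unique) outward unit normal $n(p)$, a direction $u\in\mathbb{S}^1$ illuminates $p$ if and only if $\langle u,n(p)\rangle<0$, i.e., $u$ lies in the open half-plane of directions pointing "inward" at $p$. Smoothness is exactly what makes this an open condition with a clean cutoff (no proper support cone with a nonempty interior of admissible directions; it is precisely a half-plane). Thus, parametrizing boundary points by the angle $\theta$ of their outward normal (the Gauss map, which for a smooth convex body is a surjection onto $\mathbb{S}^1$), a direction making angle $\alpha$ illuminates exactly the boundary points whose normal angle lies in the open arc $(\alpha+\pi/2,\alpha+3\pi/2)$ of length $\pi$ — an open semicircle of normal directions. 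So the problem reduces to the following purely combinatorial covering problem on the circle $\mathbb{S}^1$ of normal directions: choose a multiset of $2m+1$ points (the "centers" $\alpha_i+\pi$, say) on $\mathbb{S}^1$ so that every point of $\mathbb{S}^1$ is covered by at least $m$ of the open semicircular arcs of length $\pi$ centered at those points.

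Next I would solve that circle-covering problem. Take the $2m+1$ directions equally spaced: $\alpha_k=\frac{2\pi k}{2m+1}$ for $k=0,1,\dots,2m$; the corresponding open arcs of normal angles each have length $\pi$. Fix an arbitrary normal angle $\theta$; I must show it lies in at least $m$ of these arcs. The arc associated with $\alpha_k$ misses $\theta$ precisely when $\theta$ lies in the complementary closed arc of length $\pi$ centered at $\alpha_k$ (diametrically opposite), i.e., when $\theta$ is within angular distance $\pi/2$ of $\alpha_k+\pi$. Since the points $\alpha_k+\pi$ are also $2m+1$ equally spaced points, a short counting argument (an interval of length $\pi$ on a circle of circumference $2\pi$ containing the $2m+1$ equally spaced points, whose gaps are $\frac{2\pi}{2m+1}$) shows at most $m+1$ of them fall in any closed half-circle, hence at least $m$ of the arcs cover $\theta$; I would need to check the boundary/endpoint cases carefully since the illuminating arcs are open (the non-illuminating complementary arcs are closed), but equal spacing by an odd number $2m+1$ avoids any point being simultaneously an endpoint of two opposite arcs, so the count is robust. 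This yields $I^m(K)\leq 2m+1$.

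The main obstacle, and the step I would be most careful about, is the precise handling of the open/closed endpoints in the Gauss-map reduction together with the fact that the Gauss map of a general smooth convex body need not be injective (it is only a continuous surjection — flat pieces are excluded by smoothness only in the sense of having a unique supporting line, but the normal can still be locally constant only on single points for strictly convex bodies, while general smooth bodies may have segments in the boundary... actually a smooth convex body can contain a line segment, on which the normal is constant, so the Gauss map is surjective but not injective). I need the reduction to be an "iff" in both directions: that the set of directions illuminating a given boundary point is exactly an open half-plane determined by the outward normal (true for smooth bodies, using that the supporting line is unique), and that every $\theta\in\mathbb{S}^1$ is realized as an outward normal of some boundary point (surjectivity of the Gauss map for any convex body — the supporting line with outward normal $\theta$ touches $\partial K$). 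Once these two facts are pinned down, the argument is as above; I would state them as short lemmas, cite $I(K)=d+1$ for smooth $K$ from \cite{Hadwiger_smooth} as the $m=1$ precedent, and then the only genuinely new content is the equally-spaced construction and its covering count.
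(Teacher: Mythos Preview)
Your approach is correct and genuinely different from the paper's. You reduce the problem via the Gauss map to an $m$-fold covering of $\mathbb{S}^1$ by open semicircles and then use $2m+1$ equally spaced directions together with a pigeonhole count; this is clean, elementary, and the endpoint analysis you flag (oddness of $2m+1$ preventing antipodal pairs among the $\alpha_k$) is exactly what makes the closed-complement arcs contain at most $m+1$ of the points. The smoothness hypothesis is used precisely where you say: to get the equivalence ``$u$ illuminates $p$ iff $\langle u,n(p)\rangle<0$''; surjectivity of the Gauss map holds for any planar convex body, and non-injectivity (flat arcs) is harmless since you only ever use the normal direction.

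The paper instead builds an equiangular $(2m+1)$-gon $P=p_1\cdots p_{2m+1}$ circumscribed about $K$, views it as the cap body $\widehat{K}(\{p_1,\dots,p_{2m+1}\})$, and for each $i$ locates a point $v_i$ (the intersection of two non-adjacent edge lines) whose spike $S(K,v_i)$ contains $m$ consecutive vertices $p_{i+1},\dots,p_{i+m}$. A single direction $u_i$ illuminating $v_i$ is then pushed, via Lemma~\ref{illuminate_inside_spike} and Lemma~\ref{illuminate_closed_cap}, to illuminate the $m$ closed caps $\overline{C}(K,p_{i+k})$, whose union is $\partial K$. The net construction is morally the same equally-spaced set of $2m+1$ directions, but the paper routes everything through the cap-body lemmas of Section~2. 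That machinery pays off elsewhere (polygons, cap bodies of balls, Lemma~\ref{sub_cap_body_must_be_illumnated}), which is why the paper prefers it; your Gauss-map argument is shorter and more transparent for this particular theorem but does not directly feed the later sections.
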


\begin{thm}\label{main_ball_upper_bound}
	For any positive integers $m$ and $d$, where $d\geq 3$, we have
	$$I^m(\mathbb{B}^d)\leq (d-1)m+1+\left\lceil\frac{m}{2}\right\rceil.$$
\end{thm}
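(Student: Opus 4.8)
The plan is to translate the statement into a question about finite point sets on the sphere and then produce an explicit construction. Recall that a direction $u\in\mathbb{S}^{d-1}$ illuminates a boundary point $p\in\partial\mathbb{B}^{d}$ precisely when $\langle u,p\rangle<0$: if $p+\lambda u\in\Int(\mathbb{B}^{d})$ for some small $\lambda>0$, then $|p+\lambda u|^{2}=1+2\lambda\langle p,u\rangle+\lambda^{2}<1$ forces $\langle p,u\rangle<0$, and conversely $\langle p,u\rangle<0$ makes $p+\lambda u$ interior for $0<\lambda<-2\langle p,u\rangle$. Hence $\mathbb{B}^{d}$ is $m$-fold illuminated by a multiset $U\subset\mathbb{S}^{d-1}$ if and only if every open halfspace of $\mathbb{E}^{d}$ whose bounding hyperplane passes through the origin contains at least $m$ members of $U$ (equivalently: after deleting any $m-1$ points of $U$, the origin still lies in the interior of the convex hull of the remaining points). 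So it suffices to exhibit one such multiset of size $N:=(d-1)m+1+\lceil m/2\rceil$.

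I would build $U$ by induction on $d$. For the base dimension I start from a configuration in $\mathbb{E}^{2}$ that $m$-fold illuminates $\mathbb{B}^{2}$ — the vertices of a regular $(2m+1)$-gon, which work by Theorem \ref{main_smooth_two_dim} since $\mathbb{B}^{2}$ is smooth — and at each step I pass from $\mathbb{E}^{d-1}$ to $\mathbb{E}^{d}=\mathbb{E}^{d-1}\times\mathbb{R}$ by a coning (``antiprism'') operation: I split the $(d-1)$-dimensional configuration into two parts, lift one part to the spherical cap at latitude $+\varphi$ about the pole $e_{d}$ and the other to latitude $-\varphi$, for a carefully chosen angle $\varphi$, and I adjoin a small number of new unit vectors clustered near the two poles $\pm e_{d}$. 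Writing a lifted vector as $(u\cos\varphi,\pm\sin\varphi)$ with $u\in\mathbb{S}^{d-2}$, one sees that for a halfspace with unit normal $(q,t)$ the membership condition for a lifted point only involves $\langle u,q/|q|\rangle$ compared against a threshold that depends monotonically on $t$, so the lower-dimensional illumination property can be reused; the polar vectors are adjoined exactly to handle the halfspaces whose normal is nearly vertical. The number of new vectors needed is $\lceil m/2\rceil$ for the first step $\mathbb{E}^{2}\to\mathbb{E}^{3}$ (the odd parity of the $(2m+1)$-gon, which prevents antipodal pairs, is what makes this step cheap) and $m$ for each later step, and summing these increments onto $2m+1$ gives exactly $N$.

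The verification splits according to the tilt of the halfspace's bounding hyperplane relative to the equator $\{x_{d}=0\}$. For nearly horizontal halfspaces, essentially all of one of the two lifted caps lies inside, and its size is at least $m$ by the way we split. For nearly vertical halfspaces the $x_{d}$-coordinate is almost irrelevant and the count is controlled by the inductive hypothesis applied in dimension $d-1$. The remaining, intermediate halfspaces are the crux: as the halfspace tilts from vertical to horizontal, points of the ``far'' cap drop out one at a time while points of the ``near'' cap enter, and one must show the running total never falls below $m$. This is arranged by the choice of $\varphi$, by the way the lower-dimensional set is distributed between the two caps (so that exits and entries interleave), and by where the extra polar vectors sit; a compactness argument over the (compact) sphere of halfspace normals lets one fix all the parameters — the latitudes and the sizes of the polar perturbations — uniformly, choosing them in the right order (latitudes first, then the perturbations) so that no circularity arises.

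I expect the intermediate-tilt case, together with this uniform choice of parameters, to be the main obstacle: the bookkeeping of how many cap points remain simultaneously illuminated as a function of the tilt angle is delicate, and it is precisely where the unusual ``$+\lceil m/2\rceil$'' correction term is forced. The horizontal and vertical extremes, and the reduction to the planar result, should be comparatively routine once the construction is set up.
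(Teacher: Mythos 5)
Your reformulation (illumination iff $\langle u,p\rangle<0$, hence iff every open halfspace through the origin contains at least $m$ members of $U$) is correct, and the overall plan of inducting on $d$ starting from the planar $(2m+1)$-gon configuration matches the shape of the paper's argument. Your proposed first step $\mathbb{E}^{2}\to\mathbb{E}^{3}$ — split the $2m+1$ equatorial directions between two latitudes $\pm\varphi$ and adjoin $\lceil m/2\rceil$ vectors near a pole — is essentially how the paper establishes its base case $d=3$: the paper takes the $(2m+1)$ directions $u_{i}$ at angle $2i\pi/(2m+1)$ in the equatorial plane, perturbs the $z$-coordinate by alternating $\varepsilon$ (odd $i$) and $-\varepsilon^{2}$ (even $i$), and adds $\lceil m/2\rceil$ copies of the south pole direction. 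That computation, though explicit, is already nontrivial and handled case-by-case in the paper.

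Where you genuinely diverge — and where the gap is — is the inductive step from $\mathbb{B}^{d}$ to $\mathbb{B}^{d+1}$. You propose to keep repeating the antiprism splitting, and you yourself flag the bookkeeping for ``intermediate'' halfspaces as the crux. That is exactly the part that is not proved: showing that as the halfspace's normal tilts from vertical to horizontal, the count of illuminated directions never dips below $m$ requires establishing that exits from one lifted cap and entries into the other interleave, which is a combinatorial claim about your chosen splitting and latitudes and does not follow from a compactness argument. Compactness can upgrade ``for every normal, the count is $\geq m$'' to a uniform bound, but it cannot supply the pointwise inequality you need in the first place, and it certainly does not tell you how to distribute the $(d-1)$-dimensional set between the two caps so that the interleaving happens. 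As written the inductive step is a plan, not a proof.

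The paper avoids this entirely by a quite different device. Using the identity $I^{m}(\mathbb{B}^{d})=C^{m}(\mathbb{B}^{d})$ (illumination number equals covering number by translates of the open ball), it takes an $m$-fold covering of $\mathbb{B}^{d}$ by $n$ translates of $\Int(\mathbb{B}^{d})$, embeds the disk in the equatorial hyperplane of $\mathbb{E}^{d+1}$, and applies inverse stereographic projection from the north pole. The translated balls pull back to spherical caps on $\mathbb{S}^{d}$ that cover the closed southern hemisphere $m$-fold and are each contained in an open hemisphere centred at their own centre $c_{i}$; hence the directions $-c_{1},\dots,-c_{n}$ $m$-fold illuminate $\mathbb{S}^{d}_{-}$, and adjoining $m$ copies of the south-pole direction handles the rest. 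This gives $I^{m}(\mathbb{B}^{d+1})\le I^{m}(\mathbb{B}^{d})+m$ cleanly, with no intermediate-tilt analysis at all. If you want to salvage your route, the honest advice is either to carry out the interleaving argument in full for a concrete splitting (which I expect to be painful already for $d=4$), or to replace your inductive step with the covering/stereographic one, which is both shorter and dimension-independent.
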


Furthermore, we also obtain some results related to the illumination numbers of convex polygons (Section \ref{subsection_polygon}) and cap bodies of $\mathbb{B}^d$ in small dimensions (Section \ref{subsection_cap_body_of_ball}). In particular, we get the $m$-fold illumination numbers of regular convex polygons as follows.

\begin{thm}\label{main_regular_polygon}
	Let $K$ be a regular convex $n$-sided polygon, where $n\geq 3$. Then
	$$I^m(K)=\left\lceil\frac{mn}{\left\lfloor\frac{n-1}{2}\right\rfloor}\right\rceil.$$
	In addition, if $n$ satisfies one of the following conditions:
	\begin{enumerate}[label=(\roman*)]
		\item $n$ is odd and $n\geq 2m+1$,
		\item $n$ is even and $n\geq 4m+2$,
	\end{enumerate}
	then $I^m(K)=2m+1$.
\end{thm}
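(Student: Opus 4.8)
The plan is to reduce the problem, as is standard for polygons, to illuminating only the vertices of $K$, and then to a purely combinatorial covering problem on the cyclic group $\mathbb{Z}_n$. Label the edges of the polygon $e_1,\dots,e_n$ cyclically with outer unit normals $n_1,\dots,n_n$, and let $v_i$ be the common vertex of $e_i$ and $e_{i+1}$ (indices mod $n$). A direction $u$ illuminates a point in $\relint(e_i)$ if and only if $\langle u,n_i\rangle<0$, and it illuminates $v_i$ if and only if $\langle u,n_i\rangle<0$ and $\langle u,n_{i+1}\rangle<0$; in particular every direction that illuminates $v_i$ also illuminates all of $\relint(e_i)$ and $\relint(e_{i+1})$. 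Hence a multiset of directions $m$-fold illuminates $\partial K$ exactly when it $m$-fold illuminates $\{v_1,\dots,v_n\}$, so $I^m(K)$ equals the least size of a multiset of directions illuminating each vertex at least $m$ times.

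Next I would analyse the regular $n$-gon. For a fixed direction $u$ the set $B_u=\{\,i:\langle u,n_i\rangle<0\,\}$ of "back" edges is a cyclic interval of indices, and the vertices illuminated by $u$ are precisely those $v_i$ with $\{i,i+1\}\subseteq B_u$; thus they form a cyclic interval of exactly $|B_u|-1$ vertices. Since the normals $n_1,\dots,n_n$ of a regular $n$-gon are equally spaced on $\mathbb{S}^1$, $|B_u|$ is the number of them lying in an open half-plane, which is at most $n/2$ when $n$ is even (antipodal pairs) and at most $(n+1)/2$ when $n$ is odd; in both cases the maximum number of vertices a single direction can illuminate equals $k:=\lfloor (n-1)/2\rfloor$, and this maximum is attained. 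Using the $n$-fold rotational symmetry of $K$ (rotate both $K$ and $u$ by $2\pi/n$), for every $i$ there is a direction whose illuminated vertex set is exactly the block $\{v_i,v_{i+1},\dots,v_{i+k-1}\}$.

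The lower bound is then immediate from double counting: each of the $n$ vertices needs at least $m$ illuminating directions, so the number of (direction, illuminated vertex) incidences is at least $mn$; one direction contributes at most $k$ of them, so any illuminating multiset has size at least $mn/k$, hence at least $N:=\lceil mn/k\rceil$. For the matching upper bound I would build, via the previous paragraph, an $m$-fold cover of $\mathbb{Z}_n$ by $N$ cyclic intervals of length $k$: take the interval $\{p_j,p_j+1,\dots,p_j+k-1\}\bmod n$ where $p_j=\lfloor jmn/N\rfloor$ for $j=0,\dots,N-1$. The $p_j$ are non-decreasing, consecutive ones differ by at most $\lceil mn/N\rceil\le k$ (since $N\ge mn/k$ and $k\in\mathbb{Z}$), and $p_{N-1}\ge mn-\lceil mn/N\rceil\ge mn-k$; so as subsets of $\mathbb{Z}$ these intervals cover $\{0,1,\dots,mn-1\}$. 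Because $k<n$, no interval meets a residue class more than once, while $\{0,\dots,mn-1\}$ contains exactly $m$ members of each class, so every residue is covered at least $m$ times. Translating back, $N$ suitably rotated directions $m$-fold illuminate all vertices, whence $I^m(K)\le N$ and therefore $I^m(K)=\lceil mn/\lfloor(n-1)/2\rfloor\rceil$.

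The "in addition" clause is then arithmetic. For $n$ odd, $k=(n-1)/2$ and $mn/k=2m+\frac{2m}{n-1}$, so $\lceil mn/k\rceil=2m+\lceil\frac{2m}{n-1}\rceil$, which equals $2m+1$ exactly when $\frac{2m}{n-1}\le 1$, i.e. $n\ge 2m+1$; for $n$ even, $k=(n-2)/2$ and $mn/k=2m+\frac{4m}{n-2}$, giving $\lceil mn/k\rceil=2m+1$ exactly when $n\ge 4m+2$. (Theorem~\ref{main_lower_bound} supplies the matching lower bound $2m+(d-1)=2m+1$ in these cases, although it is not needed once the exact formula is established.) I expect the main obstacles to be the geometric and combinatorial cores, namely pinning down that the regular $n$-gon really gives the value $k=\lfloor(n-1)/2\rfloor$ — the even/odd split and the "attained for every block of consecutive vertices" claim must be handled carefully — and verifying the covering construction: the counting lower bound is trivial, but checking that $\lceil mn/k\rceil$ intervals genuinely yield an $m$-fold cyclic cover requires the floor estimates above to be carried out without off-by-one slips.
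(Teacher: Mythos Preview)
Your proposal is correct and follows essentially the same route as the paper: reduce to $m$-fold illuminating the vertex set, use a double-counting incidence argument for the lower bound $\lceil mn/k\rceil$ with $k=\lfloor(n-1)/2\rfloor$, and exhibit $\lceil mn/k\rceil$ directions whose vertex-blocks of length $k$ give an $m$-fold cyclic cover of $\mathbb{Z}_n$, followed by the same arithmetic for the odd/even ``in addition'' clause. The only cosmetic difference is the choice of block starting points: the paper simply takes $0,k,2k,\dots,(N-1)k$ (so that the blocks tile $\{0,\dots,Nk-1\}\supseteq\{0,\dots,mn-1\}$ in $\mathbb{Z}$), whereas you use $p_j=\lfloor jmn/N\rfloor$; both constructions verify in the same way and yield the same bound.
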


\section{Useful lemmas}
 In this section, we will study some properties of cap bodies $\widehat{K}(V)$, which will be very useful for proving our results.
 
	For any $p\in \widehat{K}(V)$, there exists a point $v\in V$ such that
	$$p\in\conv(K\cup\{v\}).$$
	Since $K$ is convex, we have that there is a point $q\in K$ such that
	$$p=\alpha\cdot q+\beta\cdot v,$$
	for some $\alpha,\beta\geq 0$, and $\alpha+\beta=1$.

\begin{lem}\label{spiky_convex_condition}
   Suppose that $V$ is a finite subset of $\mathbb{E}^d\setminus K$ such that for any pair of distinct points $v_1,v_2\in V$, the line segment $\overline{v_1v_2}$ intersects $K$. Then	$\widehat{K}(V)$ is convex and
	$$\widehat{K}(V)=\conv(K\cup V).$$
\end{lem}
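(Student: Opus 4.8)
The plan is to establish the set identity $\widehat{K}(V)=\conv(K\cup V)$; once this is known, convexity of $\widehat{K}(V)$ is automatic, since a convex hull is convex. Throughout I assume $V\neq\emptyset$ (the intended case; note also that $V\cap K=\emptyset$ because $V\subseteq\mathbb{E}^d\setminus K$, and that $K\neq\emptyset$). One inclusion is immediate: $\conv(K\cup\{v\})\subseteq\conv(K\cup V)$ for every $v\in V$, hence $\widehat{K}(V)=\bigcup_{v\in V}\conv(K\cup\{v\})\subseteq\conv(K\cup V)$. The content is the reverse inclusion.

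To prove $\conv(K\cup V)\subseteq\widehat{K}(V)$, I take an arbitrary $p\in\conv(K\cup V)$ and, using the convexity of $K$ to merge all $K$-terms of a convex representation of $p$ into a single point, write
$$p=\alpha q+\sum_{i=1}^{k}\beta_i v_i,\qquad q\in K,\quad v_1,\dots,v_k\in V\text{ distinct},\quad \alpha\geq 0,\quad \beta_i>0,\quad \alpha+\sum_{i=1}^{k}\beta_i=1.$$
The aim is to show $p\in\conv(K\cup\{v\})$ for some single $v\in V$, proved by induction on $k$; the cases $k\leq 1$ are trivial. For $k\geq 2$, the hypothesis on $V$ gives a point $w\in K\cap\overline{v_1v_2}$, and since $v_1,v_2\notin K$ we may write $w=(1-t)v_1+tv_2$ with $t\in(0,1)$. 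Comparing $\beta_1/(1-t)$ with $\beta_2/t$ determines which of the substitutions $v_1=\frac{1}{1-t}(w-tv_2)$ or $v_2=\frac{1}{t}(w-(1-t)v_1)$ keeps every coefficient nonnegative; carrying it out rewrites $\beta_1v_1+\beta_2v_2$ as $c\,w+\beta'v_j$ with $c,\beta'\geq 0$, $c+\beta'=\beta_1+\beta_2$, and $j\in\{1,2\}$. Absorbing $w$ into the $K$-term by convexity of $K$ — and in the degenerate case where the new $K$-coefficient is $0$, observing that $p$ is then just one of the $v_i$ — exhibits $p$ as a convex combination of a point of $K$ with at most $k-1$ distinct points of $V$, so the inductive hypothesis applies. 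This gives $\conv(K\cup V)\subseteq\widehat{K}(V)$, hence equality, and in particular $\widehat{K}(V)$ is convex.

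A variant that avoids the full induction: prove directly that $\widehat{K}(V)$ is convex by running the same chord argument on just two points of $\widehat{K}(V)$, then conclude $\conv(K\cup V)\subseteq\widehat{K}(V)$ because any convex set containing $K\cup V$ contains its convex hull. In either route, the main obstacle is elementary rather than deep: the bookkeeping in the reduction step — choosing which of $v_1,v_2$ to discard so that nonnegativity is preserved, verifying that the coefficients still sum to $1$, and handling the cases where some coefficient vanishes. The only use of the hypothesis on $V$ is to produce the chord point $w\in K\cap\overline{v_1v_2}$ with $t$ strictly in $(0,1)$, which is precisely what makes the substitution valid.
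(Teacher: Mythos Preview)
Your proof is correct, and the paper follows exactly the route you describe as a ``variant'': it verifies directly that $\widehat{K}(V)$ is closed under convex combinations of pairs $p_i=\alpha_i q_i+\beta_i v_i$ via the same chord-point substitution (taking $q=\alpha'v_1+\beta'v_2\in K$ and splitting into three cases according to the sign of $\alpha\beta_1\beta'-\beta\beta_2\alpha'$), then infers $\conv(K\cup V)\subseteq\widehat{K}(V)$ from convexity. Your primary inductive argument on the number $k$ of distinct $V$-points is just this same reduction iterated, so the two approaches are essentially identical; the paper's is marginally shorter for not setting up the induction. (Small aside: the ``degenerate case where the new $K$-coefficient is $0$'' never actually arises, since $c=\beta_1/(1-t)$ or $\beta_2/t$ is strictly positive.)
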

\begin{proof}
	Suppose that $p_1,p_2\in\widehat{K}(V)$ , then
	$$p_1=\alpha_1\cdot q_1+\beta_1\cdot v_1$$
	and
	$$p_2=\alpha_2\cdot q_2+\beta_2\cdot v_2,$$
	for some $q_1,q_2\in K$, $v_1,v_2\in V$, and $\alpha_1,\alpha_2, \beta_1, \beta_2\geq0$, where $\alpha_1+\beta_1=1$ and $\alpha_2+\beta_2=1$. 

	For any $\alpha,\beta\geq0$, where $\alpha+\beta=1$. We have
	$$\alpha\cdot p_1+\beta\cdot p_2=\alpha\alpha_1\cdot q_1+\beta\alpha_2\cdot q_2+\alpha\beta_1\cdot v_1+\beta\beta_2\cdot v_2.$$
	If $v_1=v_2$, then it is clear that $\alpha\cdot p_1+\beta\cdot p_2\in \conv(K\cup\{v_1\})\subset\widehat{K}(V)$.
	Now suppose that $v_1\neq v_2$. Since the line segment $\overline{v_1v_2}$ intersects $K$, there is a point $q\in K$ such that $q=\alpha'\cdot v_1+\beta'\cdot v_2$, for some $\alpha',\beta'>0$ and $\alpha'+\beta'=1$.
	If $\alpha\beta_1\beta'=\beta\beta_2\alpha'$, then
	$$\alpha\cdot p_1+\beta\cdot p_2=\alpha\alpha_1\cdot q_1+\beta\alpha_2\cdot q_2+\alpha\beta_1(\alpha')^{-1}\cdot q\in K\subset \widehat{K}(V).$$
	If $\alpha\beta_1\beta'>\beta\beta_2\alpha'$, then
	\begin{align*}
		\alpha\cdot p_1+\beta\cdot p_2 &=\alpha\alpha_1\cdot q_1+\beta\alpha_2\cdot q_2+\beta\beta_2(\beta')^{-1}\cdot q+(\alpha\beta_1-\beta\beta_2(\beta')^{-1}\alpha')\cdot v_1\\
		&\in \conv(K\cup\{v_1\})\subset \widehat{K}(V).
	\end{align*}
	If $\alpha\beta_1\beta'<\beta\beta_2\alpha'$, then
	\begin{align*}
		\alpha\cdot p_1+\beta\cdot p_2 &=\alpha\alpha_1\cdot q_1+\beta\alpha_2\cdot q_2+\alpha\beta_1(\alpha')^{-1}\cdot q+(\beta\beta_2-\alpha\beta_1(\alpha')^{-1}\beta')\cdot v_2\\
		&\in \conv(K\cup\{v_2\})\subset \widehat{K}(V).
	\end{align*}
	It follows that $\widehat{K}(V)$ is convex, and hence
	$$\conv(K\cup V)\subset \widehat{K}(V).$$
	On the other hand, it is clear that
	$$\widehat{K}(V)=\bigcup_{v\in V}\conv(K\cup\{v\})\subset\conv(K\cup V).$$ 
	Therefore,
	$$\widehat{K}(V)=\conv(K\cup V).$$ 
\end{proof}
\begin{rem}
	Lemma \ref{spiky_convex_condition} is also true, provided $V$ is countable.  From Definition \ref{def_cap_body}, we know that a cap body is always convex.
\end{rem}

\begin{defn}
	Let $K$ be a $d$-dimensional convex body, and let $v\in\mathbb{E}^d\setminus K$. We call the set $\widehat{K}(v)\setminus K$ a spike of $K$ with respect to the point $v$, and denote by $S(K,v)$.
\end{defn}

\begin{lem}\label{spiky_body_subset}
	Let $K$ be a $d$-dimensional convex body, and let $v,v'\in\mathbb{E}^d\setminus K$. If $v'\in S(K,v)$, then
	$$\widehat{K}(v')\subset\widehat{K}(v).$$
\end{lem}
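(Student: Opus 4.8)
The plan is to unwind the definitions and observe that the statement is essentially the monotonicity of convex hulls. Recall that for a single point $v\in\mathbb{E}^d\setminus K$ the spiky body is, directly from its definition, $\widehat{K}(v)=\conv(K\cup\{v\})$, which is convex and contains $K$. By hypothesis $v'\in S(K,v)=\widehat{K}(v)\setminus K$, so in particular $v'\notin K$ (hence $\widehat{K}(v')$ is well defined) and $v'\in\widehat{K}(v)=\conv(K\cup\{v\})$.

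The key step is then: since $\conv(K\cup\{v\})$ is a convex set that contains both $K$ and the point $v'$, it contains every convex combination of points of $K\cup\{v'\}$, i.e.\ it contains $\conv(K\cup\{v'\})$. Explicitly, an arbitrary point of $\widehat{K}(v')$ has the form $\alpha q+\beta v'$ with $q\in K$, $\alpha,\beta\ge 0$, $\alpha+\beta=1$; since $q\in K\subset\widehat{K}(v)$ and $v'\in\widehat{K}(v)$, convexity of $\widehat{K}(v)$ gives $\alpha q+\beta v'\in\widehat{K}(v)$. Therefore $\widehat{K}(v')=\conv(K\cup\{v'\})\subset\conv(K\cup\{v\})=\widehat{K}(v)$, as claimed.

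There is no real obstacle here; the only thing to be careful about is citing the right description of $\widehat{K}(v)$ for a singleton $V$ and checking that $v'\notin K$ so that the conclusion $\widehat{K}(v')\subset\widehat{K}(v)$ is a statement about a genuine spiky body. The argument does not even need Lemma \ref{spiky_convex_condition}, since convexity of a single-vertex spiky body is immediate from its being a convex hull.
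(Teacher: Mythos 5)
Your proof is correct and takes essentially the same approach as the paper: both arguments observe that $\widehat{K}(v)$ is a convex set containing $K$ and $v'$, hence it contains $\conv(K\cup\{v'\})=\widehat{K}(v')$. You simply spell out the convex-combination step more explicitly and note that convexity of a singleton spiky body is immediate without invoking Lemma \ref{spiky_convex_condition}.
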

\begin{proof}
Since $\widehat{K}(v)$ is convex, and $v'\in S(K,v)\subset \widehat{K}(v)$ and $K\subset \widehat{K}(v)$, we have that $\widehat{K}(v')=\conv(K\cup\{v'\})\subset\widehat{K}(v)$.
\end{proof}

\begin{defn}
	Let $K$ be a $d$-dimensional convex body, and let $v\in\mathbb{E}^d\setminus K$. We call the set
	$$\left\{p\in\partial K\mid p=\alpha\cdot q+(1-\alpha)\cdot v,~\mbox{for some}~q\in\Int(K), \alpha\in(0,1) \right\}$$ 
	an open cap of $K$ with respect to the point $v$, and denote by $C(K,v)$.
\end{defn}

In fact, the open cap $C(K,v)$ is the collection of all boundary points of $K$ that are illuminated by the point $v$ with respect to $K$, i.e.,
$$C(K,v)=\{p\in\partial K\mid p~\mbox{is illuminated by the point}~v~\mbox{with respect to}~K\}.$$

\begin{lem}\label{int_cap_body}
	If $p\in\Int(\widehat{K}(v))$ and $p\not\in \Int(K)$, then there exist $q\in\Int(K)$ and $\alpha\in(0,1)$ such that
	$$p=\alpha\cdot q+(1-\alpha)\cdot v.$$
\end{lem}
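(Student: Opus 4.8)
The plan is to exploit convexity of $\widehat{K}(v)$ together with the decomposition of points in $\widehat{K}(v)=\conv(K\cup\{v\})$ as convex combinations of a point of $K$ and the apex $v$. By Lemma \ref{spiky_convex_condition} (and the fact that for a single point $V=\{v\}$ the segment condition is vacuous), we know $\widehat{K}(v)=\conv(K\cup\{v\})$, so every $p\in\widehat{K}(v)$ can be written as $p=\beta\cdot q_0+(1-\beta)\cdot v$ for some $q_0\in K$ and $\beta\in[0,1]$. If $p\in\Int(\widehat{K}(v))$ and $p\notin\Int(K)$, then $p\neq v$ (since $v\in\partial\widehat{K}(v)$ as $v\notin K$), so $\beta\in(0,1]$; and $p\notin\Int(K)$ forces $\beta<1$, hence $\beta\in(0,1)$ already gives \emph{some} representation, but with $q_0$ possibly only in $\partial K$. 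The work is to upgrade $q_0\in K$ to $q\in\Int(K)$.

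First I would fix an interior point $z\in\Int(K)\subset\Int(\widehat{K}(v))$. Since $p\in\Int(\widehat{K}(v))$, the segment from $z$ through $p$ can be extended slightly beyond $p$ while staying in $\widehat{K}(v)$: there is $\varepsilon>0$ with $p'=p+\varepsilon(p-z)\in\widehat{K}(v)$. Write $p'=\beta'\cdot q'+(1-\beta')\cdot v$ with $q'\in K$, $\beta'\in[0,1]$. Then $p$ lies on the segment $\overline{z p'}$ strictly between its endpoints, so $p=\mu z+(1-\mu)p'$ for some $\mu\in(0,1)$. Substituting the representation of $p'$ gives
$$p=\mu z+(1-\mu)\beta'\cdot q'+(1-\mu)(1-\beta')\cdot v=\bigl(\mu+(1-\mu)\beta'\bigr)\Bigl(\tfrac{\mu}{\mu+(1-\mu)\beta'}z+\tfrac{(1-\mu)\beta'}{\mu+(1-\mu)\beta'}q'\Bigr)+(1-\mu)(1-\beta')\cdot v.$$
Set $q$ to be the bracketed convex combination of $z$ and $q'$; since $z\in\Int(K)$, $q'\in K$, and the coefficient of $z$ is strictly positive (as $\mu>0$), convexity of $K$ together with the standard fact that a convex combination of an interior point and any point of $K$ with positive weight on the interior point lies in $\Int(K)$ gives $q\in\Int(K)$. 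Writing $\alpha=\mu+(1-\mu)\beta'$, we have $p=\alpha\cdot q+(1-\alpha)\cdot v$ with $\alpha\in[0,1]$.

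It remains to check $\alpha\in(0,1)$. Since $\mu>0$ we have $\alpha>0$. If $\alpha=1$ then $p=q\in\Int(K)$, contradicting the hypothesis $p\notin\Int(K)$; hence $\alpha<1$. This completes the argument. The only mildly delicate point — and the one I would expect to need the most care — is the verification that the extension point $p'$ genuinely exists and lies in $\widehat{K}(v)$, i.e. the elementary fact that from an interior point of a convex body one can push slightly past any other interior point and remain inside; this is where the hypothesis $p\in\Int(\widehat{K}(v))$ (rather than merely $p\in\widehat{K}(v)$) is essential, since it guarantees an open ball around $p$ inside $\widehat{K}(v)$.
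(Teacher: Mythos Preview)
Your argument is correct, but it follows a genuinely different route from the paper's proof. The paper argues directly that the ray $vp$ must meet $\Int(K)$: if not, the line $vp$ misses $\Int(K)$, so by the hyperplane separation theorem there is a supporting hyperplane $H$ through the line $vp$ with $K$ (hence $\widehat{K}(v)$) on one side, contradicting $p\in\Int(\widehat{K}(v))$. Having found $q\in\Int(K)$ on the ray $vp$, the paper then rules out $\alpha\geq 1$ using a second representation $p=\lambda q'+(1-\lambda)v$ with $q'\in K$.

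Your approach instead fixes an auxiliary interior point $z\in\Int(K)$, extends the segment $\overline{zp}$ slightly past $p$ to a point $p'\in\widehat{K}(v)$ (using only that $p$ is interior to $\widehat{K}(v)$), and then combines the representation of $p'$ with $z$ to manufacture a representation of $p$ whose ``$K$-part'' automatically has positive weight on the interior point $z$, forcing $q\in\Int(K)$. This avoids the separation theorem entirely and relies only on the elementary fact that a convex combination of a point of $K$ and a point of $\Int(K)$, with positive weight on the latter, lies in $\Int(K)$. The trade-off is that your $q$ need not lie on the ray $vp$, whereas the paper's does; for the purposes of this lemma and its downstream use (e.g.\ in Lemma~\ref{illuminate_cap}, where only the existence of some $q\in\Int(K)$ with $u=\alpha\lambda^{-1}(q-v)$ is needed) this makes no difference.
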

\begin{proof}
	Since $p\in\Int(\widehat{K}(v))\subset \widehat{K}(v)$, we have that the ray $vp$ intersects $K$. Suppose, for the sake of contradiction, that the ray $vp$ does not intersect $\Int(K)$. Then the line $vp$ does not intersect $\Int(K)$. By the hyperplane separation theorem, there is a hyperplane $H$ such that $H$ contains the line $vp$, and $K$ lies in a closed half space $H^+$ that contains the hyperplane $H$ as its boundary. Since $v\in H\subset H^+$ and $K\subset H^+$, we have that $\widehat{K}(v)\subset H^+$. Hence, $p\in \Int(\widehat{K}(v))\subset \Int(H^+)$. This is a contradiction, since $p\in H$.
	Therefore, the ray $vp$ intersects $\Int(K)$. Suppose that $q\in\Int(K)$ and $q$ lies on the ray $vp$. Then there exists $\alpha>0$ such that
	$$p=\alpha\cdot q+(1-\alpha)\cdot v.$$
	It is clear that $\alpha\neq 1$, since $p\not\in \Int(K)$. From $p\in \widehat{K}(v)$, there exist $q'\in K$ and $\lambda\in[0,1]$ such that
	$$p=\lambda\cdot q'+(1-\lambda)\cdot v.$$
	If $\alpha> 1$, then we obtain
	$$ p=\frac{(1-\lambda)\alpha}{\alpha-\lambda}\cdot q+\frac{(\alpha-1)\lambda}{\alpha-\lambda}\cdot q'\in\Int(K),$$
	which is a contradiction. Hence, $\alpha\in(0,1)$.
\end{proof}

By Lemma \ref{int_cap_body}, one can deduce that
\begin{cor}\label{cap_equal_to_int_spkie}
	$C(K,v)=\partial K\cap\Int(\widehat{K}(v))$.
\end{cor}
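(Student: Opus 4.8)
The plan is to prove the two set inclusions separately, with the substantive direction being an immediate consequence of Lemma \ref{int_cap_body}.

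First I would establish $\partial K\cap\Int(\widehat{K}(v))\subseteq C(K,v)$. Let $p\in\partial K\cap\Int(\widehat{K}(v))$. Since $p\in\partial K$ we have $p\notin\Int(K)$, so the hypotheses of Lemma \ref{int_cap_body} are met, and the lemma yields $q\in\Int(K)$ and $\alpha\in(0,1)$ with $p=\alpha\cdot q+(1-\alpha)\cdot v$. By the defining condition of the open cap, this is exactly the statement that $p\in C(K,v)$. No further argument is needed for this inclusion.

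For the reverse inclusion $C(K,v)\subseteq\partial K\cap\Int(\widehat{K}(v))$, take $p\in C(K,v)$, so that $p\in\partial K$ and $p=\alpha\cdot q+(1-\alpha)\cdot v$ for some $q\in\Int(K)$ and $\alpha\in(0,1)$. Since $p\in\partial K$ already, it remains only to check $p\in\Int(\widehat{K}(v))$. Here I would use the standard fact that for a convex set $C$, if $x\in\Int(C)$ and $y\in C$, then $\lambda\cdot x+(1-\lambda)\cdot y\in\Int(C)$ for every $\lambda\in(0,1]$. Apply this with $C=\widehat{K}(v)$, which is convex by Lemma \ref{spiky_convex_condition} (equivalently $\widehat{K}(v)=\conv(K\cup\{v\})$), with $x=q$ and $y=v$: since $K\subseteq\widehat{K}(v)$ we get $\Int(K)\subseteq\Int(\widehat{K}(v))$, so $q\in\Int(\widehat{K}(v))$, while $v\in\widehat{K}(v)$; hence $p=\alpha\cdot q+(1-\alpha)\cdot v\in\Int(\widehat{K}(v))$, as desired.

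I do not expect a real obstacle here; the only mild point of care is the treatment of the degenerate endpoints. The condition $\alpha\in(0,1)$ in the definition of $C(K,v)$ keeps $p$ strictly between $q$ and $v$, which is exactly what the convexity fact requires, and the requirement $q\in\Int(K)$ (rather than merely $q\in K$) is what makes $q$ an interior point of $\widehat{K}(v)$; both are built into the definition of the open cap. If one prefers a self-contained argument, the convexity fact can be proved by taking a ball $B(q,\varepsilon)\subseteq\widehat{K}(v)$ and observing that $\alpha\cdot B(q,\varepsilon)+(1-\alpha)\cdot v\subseteq\widehat{K}(v)$ is a ball around $p$, but citing it as standard should be adequate.
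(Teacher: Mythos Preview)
Your proof is correct and follows exactly the approach the paper intends: the paper simply states that the corollary is deduced from Lemma \ref{int_cap_body}, and your argument spells out precisely that deduction, with the reverse inclusion handled by the standard convexity fact that a proper convex combination of an interior point and any point of a convex set lies in the interior. Nothing more is needed.
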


\begin{lem}\label{illuminate_cap}
		Let $K$ be a $d$-dimensional convex body, and let $v\in\mathbb{E}^d\setminus K$. For any given $u\in\mathbb{S}^{d-1}$. If the direction $u$ illuminates the point $v$ with respect to the cap body $\widehat{K}(v)$, then 
		\begin{enumerate}[label=(\roman*)]
			\item $u$ illuminates each point in $S(K,v)\cap\partial(\widehat{K}(v))$ with respect to $\widehat{K}(v)$,
			\item $u$ illuminates each point in $C(K,v)$ with respect to $K$.
		\end{enumerate}
\end{lem}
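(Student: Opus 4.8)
The plan is to reduce both parts to one convexity computation, with an auxiliary geometric claim needed for (ii). Abbreviate $\widehat{K}:=\widehat{K}(v)=\conv(K\cup\{v\})$, which is convex (Lemma~\ref{spiky_convex_condition} applies, the pair condition being vacuous for a single point), and recall the elementary fact that if $C$ is a convex body, $x\in C$ and $y\in\Int(C)$, then $\alpha x+(1-\alpha)y\in\Int(C)$ for every $\alpha\in[0,1)$; the same holds inside $K$. By hypothesis there is $\lambda>0$ with $q_{0}:=v+\lambda u\in\Int(\widehat{K})$.

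For (i), let $p\in S(K,v)\cap\partial\widehat{K}$. If $p=v$ there is nothing to prove. Otherwise, since $p\in\widehat{K}\setminus K$, we may write $p=\alpha q+(1-\alpha)v$ with $q\in K$ and $\alpha\in(0,1)$, and then
$$p+(1-\alpha)\lambda u=\alpha q+(1-\alpha)(v+\lambda u)=\alpha q+(1-\alpha)q_{0}\in\Int(\widehat{K}),$$
because $q\in K\subset\widehat{K}$, $q_{0}\in\Int(\widehat{K})$ and $1-\alpha>0$; as $(1-\alpha)\lambda>0$, this says $u$ illuminates $p$ with respect to $\widehat{K}$.

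For (ii), let $p\in C(K,v)$ and write $p=\alpha q+(1-\alpha)v$ with $q\in\Int(K)$ and $\alpha\in(0,1)$ (the definition of the open cap). If I can produce $\lambda^{*}>0$ with $v+\lambda^{*}u\in\Int(K)$, then exactly as in (i), $p+(1-\alpha)\lambda^{*}u=\alpha q+(1-\alpha)(v+\lambda^{*}u)\in\Int(K)$, so $u$ illuminates $p$ with respect to $K$. Thus the crux is the claim that the ray $R^{+}:=\{v+t u:t>0\}$ meets $\Int(K)$. To prove it, set $\mu:=\max\{t\ge 0:v+tu\in\widehat{K}\}$, which is finite and satisfies $\mu\ge\lambda>0$, and put $w:=v+\mu u\in\partial\widehat{K}$. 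Part (i) forces $w\notin S(K,v)$: if $w\in S(K,v)\cap\partial\widehat{K}$, then $u$ would illuminate $w$ with respect to $\widehat{K}$, giving points $v+(\mu+\varepsilon)u\in\widehat{K}$ and contradicting maximality of $\mu$; hence $w\in\widehat{K}\setminus S(K,v)=K$. Since $v\in\partial\widehat{K}$ and the line $R$ through $v$ and $q_{0}$ meets $\Int(\widehat{K})$, the point $v$ is an endpoint of the chord $R\cap\widehat{K}$, so the ray from $v$ in direction $-u$ misses $\widehat{K}$; as $v\notin K$, it follows that $R\cap K=R^{+}\cap K$ is a sub-chord of $[v,w]$ that contains $w$ but not $v$. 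If this sub-chord were contained in $\partial K$, then $R$ would miss $\Int(K)$ entirely; separating the line $R$ from $\Int(K)$ by a hyperplane $H$ and noting that $R$ meets the bounding hyperplane $H$ at $w$, one gets $R\subset H$, hence $v\in H$, hence $\widehat{K}=\conv(K\cup\{v\})$ lies in the closed half-space bounded by $H$, whence $R\cap\Int(\widehat{K})=\emptyset$ — contradicting $q_{0}\in R\cap\Int(\widehat{K})$. Therefore $R^{+}\cap K\not\subset\partial K$, which yields the required $\lambda^{*}$, and (ii) follows.

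The step I expect to be the main obstacle is exactly this last claim, $R^{+}\cap\Int(K)\neq\emptyset$, i.e. ruling out that the ray from $v$ in direction $u$ merely grazes $\partial K$ tangentially. This is where one genuinely uses that $u$ \emph{illuminates} $v$ with respect to $\widehat{K}$ — so that $v+tu$ enters $\Int(\widehat{K})$, not just $\partial\widehat{K}$ — via the supporting/separating-hyperplane argument; part (i) is invoked precisely to force the exit point $w$ of the ray onto $K$ rather than onto the lateral boundary of the spike $S(K,v)$, after which the tangency case is incompatible with $q_{0}$ being an interior point.
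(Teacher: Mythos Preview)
Your proof is correct, and the underlying mechanism is the same as the paper's: a convex-combination computation once one knows the open ray $\{v+tu:t>0\}$ meets $\Int(K)$. The paper, however, does not derive that last fact here; it simply invokes the already-proven Lemma~\ref{int_cap_body} applied to $q_0=v+\lambda u$, which (when $q_0\notin\Int(K)$) immediately yields a point $q\in\Int(K)$ with $u=\alpha\lambda^{-1}(q-v)$. With this $q$ in hand, both (i) and (ii) become one-line computations that land in $\Int(K)$ (not merely $\Int(\widehat{K})$ as in your (i), though your weaker landing of course suffices). Your argument for the crux of (ii) is effectively an inline reproof of Lemma~\ref{int_cap_body} via the same separating-hyperplane contradiction; the one genuinely new wrinkle is that you use part~(i) to force the exit point $w=v+\mu u$ off the spike and onto $K$, whereas the paper establishes the ray-meets-$\Int(K)$ fact independently, prior to this lemma.
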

\begin{proof}
	Since the direction $u$ illuminates the point $v$ with respect to $\widehat{K}(v)$, there exist $p\in \Int(\widehat{K}(v))$ and $\lambda>0$ such that $p=v+\lambda\cdot u$. From Lemma \ref{int_cap_body}, there exist $q\in\Int(K)$ and $\alpha\in(0,1)$ such that
	$$p=\alpha\cdot q+(1-\alpha)\cdot v.$$ 
	It follows that 
	$$u=\alpha\lambda^{-1}\cdot(q-v).$$
	Suppose that $s\in S(K,v)\cap\partial(\widehat{K}(v))$. Then there exist $t\in K$ and $\beta\in[0,1]$ such that
	$$s=(1-\beta)\cdot t+\beta\cdot v.$$
	If $\beta=0$ then $s=t\in K$ which is impossible, since $s\in S(K,v)=\widehat{K}(v)\setminus K$. Let $\varepsilon=\beta \lambda\alpha^{-1}$. Then
	$$s+\varepsilon\cdot u=(1-\beta)\cdot t+\beta\cdot q\in \Int(K)\subset\Int(\widehat{K}(v)).$$
	Hence, $u$ illuminates $s$ with respect to $\widehat{K}(v)$.
	
	Now consider any point $z\in C(K,v)$. Then $z\in\partial K$ and
	$$z=(1-\gamma) \cdot y+\gamma\cdot v,$$
	for some $y\in\Int(K)$ and $\gamma\in(0,1)$. Let $\delta=\gamma\lambda\alpha^{-1}$. We have
	$$z+\delta\cdot u=(1-\gamma)\cdot y+\gamma\cdot q\in\Int(K).$$
	Therefore, $u$ illuminates $z$ with respect to $K$.
\end{proof}

By adapting the arguments used in the first part of Lemma \ref{illuminate_cap}, one can obtain the following lemma.
\begin{lem}\label{illuminate_inside_spike}
	Let $K$ be a $d$-dimensional convex body, and let $v\in\mathbb{E}^d\setminus K$. For any given $u\in\mathbb{S}^{d-1}$. If the direction $u$ illuminates the point $v$ with respect to the spiky body $\widehat{K}(v)$, then for any $s\in S(K,v)$, the direction $u$ illuminates the point $s$ with respect to $\widehat{K}(s)$.
\end{lem}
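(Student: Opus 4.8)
The plan is to imitate the first part of the proof of Lemma~\ref{illuminate_cap}, stopping as soon as we have landed inside $\Int(K)$. First I would extract the algebraic consequence of the hypothesis. Since $u$ illuminates $v$ with respect to $\widehat{K}(v)$, there are $p\in\Int(\widehat{K}(v))$ and $\lambda>0$ with $p=v+\lambda\cdot u$; shrinking $\lambda$ if necessary we may assume $p\not\in\Int(K)$, so that Lemma~\ref{int_cap_body} applies and yields $q\in\Int(K)$ and $\alpha\in(0,1)$ with $p=\alpha\cdot q+(1-\alpha)\cdot v$. Comparing the two expressions for $p$ gives $\lambda\cdot u=\alpha\cdot(q-v)$, hence
\[
u=\alpha\lambda^{-1}\cdot(q-v),
\]
which is exactly the identity exploited in Lemma~\ref{illuminate_cap}.

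Next I would fix an arbitrary $s\in S(K,v)=\widehat{K}(v)\setminus K$ and write $s=(1-\beta)\cdot t+\beta\cdot v$ with $t\in K$ and $\beta\in[0,1]$. The value $\beta=0$ is impossible, since it would force $s=t\in K$; thus $\beta\in(0,1]$. Setting $\varepsilon=\beta\lambda\alpha^{-1}>0$ and substituting the identity for $u$ (so that $\varepsilon\cdot u=\beta\cdot(q-v)$), a one-line computation gives
\[
s+\varepsilon\cdot u=(1-\beta)\cdot t+\beta\cdot v+\beta\cdot(q-v)=(1-\beta)\cdot t+\beta\cdot q.
\]
Because $t\in K$, $q\in\Int(K)$, and $\beta>0$, the right-hand side is a convex combination of a point of $K$ with an interior point of $K$ that assigns positive weight to the latter, so $(1-\beta)\cdot t+\beta\cdot q\in\Int(K)$. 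Finally, $\Int(K)$ is open and $K\subset\widehat{K}(s)$, whence $\Int(K)\subset\Int(\widehat{K}(s))$; therefore $s+\varepsilon\cdot u\in\Int(\widehat{K}(s))$ with $\varepsilon>0$, which by definition means that $u$ illuminates $s$ with respect to $\widehat{K}(s)$.

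I do not expect a genuine obstacle here: the statement is essentially the ``first part'' computation of Lemma~\ref{illuminate_cap} read off into $\widehat{K}(s)$ rather than $\widehat{K}(v)$. The only points that deserve a moment's care are justifying $\beta>0$ from $s\not\in K$ (so the terminal point is genuinely interior, not merely in $K$), the harmless reduction to $p\not\in\Int(K)$ so that Lemma~\ref{int_cap_body} is applicable, and the elementary inclusion $\Int(K)\subset\Int(\widehat{K}(s))$ invoked at the end.
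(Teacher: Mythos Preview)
Your proof is correct and follows exactly the paper's approach: both extract $u=\alpha\lambda^{-1}(q-v)$ via Lemma~\ref{int_cap_body}, write $s=(1-\beta)t+\beta v$ with $\beta>0$, and verify $s+\beta\lambda\alpha^{-1}u=(1-\beta)t+\beta q\in\Int(K)\subset\Int(\widehat{K}(s))$. Your only addition is the explicit reduction ``shrinking $\lambda$ if necessary'' to ensure $p\not\in\Int(K)$ before invoking Lemma~\ref{int_cap_body}; the paper omits this check, so your version is slightly more careful but otherwise identical.
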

\begin{proof}
		Since $u$ illuminates the point $v$ with respect to $\widehat{K}(v)$, there exist $p\in \Int(\widehat{K}(v))$ and $\lambda>0$ such that $p=v+\lambda\cdot u$. From Lemma \ref{int_cap_body}, there exist $q\in\Int(K)$ and $\alpha\in(0,1)$ such that
	$$p=\alpha\cdot q+(1-\alpha)\cdot v.$$ 
	It follows that 
	$$u=\alpha\lambda^{-1}\cdot(q-v).$$
	For any $s\in S(K,v)$, there exist $t\in K$ and $\beta\in[0,1]$ such that
	$$s=(1-\beta)\cdot t+\beta\cdot v.$$
	If $\beta=0$ then $s=t\in K$ which is impossible, since $s\in S(K,v)=\widehat{K}(v)\setminus K$. Let $\varepsilon=\beta \lambda \alpha^{-1}$. Then
	$$s+\varepsilon\cdot u=(1-\beta)\cdot t+\beta\cdot q\in \Int(K)\subset\Int(\widehat{K}(s)).$$
	Hence, $u$ illuminates $s$ with respect to $\widehat{K}(s)$.
\end{proof}

\begin{defn}
	We call the set
	$$C(K,v)\cup\{p\in\partial K\mid ~\mbox{the ray}~vp~\mbox{does not intersect}~\Int(K)\}$$
	a closed cap of $K$ respect to $v$, and denote by $\overline{C}(K,v)$.
\end{defn}

\begin{lem}\label{cap_subset}
	Let $K$ be a $d$-dimensional convex body, and let $v,v'\in\mathbb{E}^d\setminus K$. If $v'\in S(K,v)$, then
	$$C(K,v')\subset C(K,v)~\mbox{and}~\overline{C}(K,v')\subset\overline{C}(K,v).$$
\end{lem}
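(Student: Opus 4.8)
The plan is to treat the two inclusions separately, observing that the first is an immediate consequence of facts already established, while the second requires a small extra idea because $\overline C(K,v)$ is \emph{not} simply the closure of $C(K,v)$ (so one cannot just pass to closures in the first inclusion).

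\textbf{First inclusion.} Since $v'\in S(K,v)$, Lemma \ref{spiky_body_subset} gives $\widehat K(v')\subseteq\widehat K(v)$, hence $\Int(\widehat K(v'))\subseteq\Int(\widehat K(v))$. Intersecting with $\partial K$ and applying Corollary \ref{cap_equal_to_int_spkie} twice yields
$$C(K,v')=\partial K\cap\Int(\widehat K(v'))\subseteq\partial K\cap\Int(\widehat K(v))=C(K,v).$$

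\textbf{Second inclusion: reformulation of $\overline C$.} I would first record that for any $w\in\mathbb E^d\setminus K$ and any $p\in\partial K$,
$$p\in\overline C(K,w)\iff [w,p]\cap\Int(K)=\emptyset$$
(equivalently, the open segment, since $w,p\notin\Int K$). For ``$\Rightarrow$'': if the ray $wp$ misses $\Int K$ then so does $[w,p]\subseteq\text{ray }wp$; and if $p\in C(K,w)$, writing $p=\alpha q+(1-\alpha)w$ with $q\in\Int K$, $\alpha\in(0,1)$, the point $q=w+\tfrac1\alpha(p-w)$ sits on the ray $wp$ strictly beyond $p$, so a second point of $\Int K$ strictly inside $[w,p]$ would, by convexity of $\Int K$, put $p$ itself in $\Int K$ — impossible. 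For ``$\Leftarrow$'': if $[w,p]\cap\Int K=\emptyset$ then either the ray $wp$ misses $\Int K$ entirely, giving $p\in\overline C(K,w)$ by definition, or the ray meets $\Int K$ at some parameter necessarily larger than $1$ (the segment being clean and $p\notin\Int K$), exhibiting $p$ as a convex combination of $w$ and an interior point, i.e.\ $p\in C(K,w)$. The part that needs the most care is making sure this dichotomy is genuinely exhaustive and matches the ray-based definition in all configurations, including when $\mathrm{line}(w,p)$ supports $K$ or the chord $K\cap\mathrm{line}(w,p)$ lies in a flat face of $\partial K$; this is the main obstacle, and once it is settled the rest is short.

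\textbf{Second inclusion: monotonicity in the apex.} Granting the reformulation, it remains to show: if $v'\in S(K,v)$ and $[v',p]\cap\Int K=\emptyset$, then $[v,p]\cap\Int K=\emptyset$. I would do this with one use of the separation theorem. The convex sets $[v',p]$ and $\Int K$ are disjoint with $\Int K$ open and nonempty, so there is a hyperplane $H=\{x:\langle a,x\rangle=c\}$ with $\langle a,\cdot\rangle\ge c$ on $K$ (hence $>c$ on the open set $\Int K$) and $\langle a,\cdot\rangle\le c$ on $[v',p]$; since $p\in K$, we get $\langle a,p\rangle=c$, so $H$ supports $K$ at $p$. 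Because $v'\in S(K,v)=\widehat K(v)\setminus K=\conv(K\cup\{v\})\setminus K$, write $v'=(1-\beta)z+\beta v$ with $z\in K$ and $\beta\in(0,1]$ (here $\beta>0$ since $v'\notin K$). Then
$$c\ \ge\ \langle a,v'\rangle\ =\ (1-\beta)\langle a,z\rangle+\beta\langle a,v\rangle\ \ge\ (1-\beta)c+\beta\langle a,v\rangle,$$
so $\langle a,v\rangle\le c$. Thus $v$ and $p$ both lie in the closed half-space $\{\langle a,\cdot\rangle\le c\}$, which is disjoint from $\Int K\subseteq\{\langle a,\cdot\rangle>c\}$; hence $[v,p]\cap\Int K=\emptyset$, i.e.\ $p\in\overline C(K,v)$ by the reformulation. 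Letting $p$ range over $\overline C(K,v')$ gives $\overline C(K,v')\subseteq\overline C(K,v)$, completing the proof.
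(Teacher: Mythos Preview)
Your proof is correct. The first inclusion is argued exactly as in the paper. For the second inclusion the core idea is also the same---produce a supporting hyperplane of $K$ at $p$ that contains $v'$, then use $v'\in S(K,v)=\conv(K\cup\{v\})\setminus K$ to place $v$ on the non-$K$ side of that hyperplane---but the organization differs slightly. The paper does not reformulate $\overline C$; having already shown $C(K,v')\subset C(K,v)$, it only needs to treat $p\in\overline C(K,v')\setminus C(K,v')$, where the \emph{ray} (hence the line) $v'p$ misses $\Int K$, separates that line from $\Int K$ to get $H$, and then splits on whether the ray $vp$ hits $\Int K$. Your segment reformulation $\overline C(K,w)=\{p\in\partial K:[w,p]\cap\Int K=\emptyset\}$ lets you handle both pieces of $\overline C(K,v')$ in one stroke and avoids the final case split, at the price of the small verification you flag; either way the separating-hyperplane step is the heart of the argument.
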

\begin{proof}
	By Lemma \ref{spiky_body_subset} and Corollary \ref{cap_equal_to_int_spkie}, we have
	$$C(K,v')=\partial K\cap \Int(\widehat{K}(v'))\subset\partial K\cap \Int(\widehat{K}(v))=C(K,v).$$
	Now we suppose that $p\in\partial K$ and the ray $v'p$ does not intersect $\Int(K)$. Then the line $v'p$ does not intersect $\Int(K)$. Let $H$ be a support hyperplane of $K$ at the point $p$ such that $H$ passes through $v'$. If the ray $vp$ does not intersect $\Int(K)$, then $p\in\overline{C}(K,v)$. If the ray $vp$ intersects $\Int(K)$, then $v$ does not belong to $H$. Since $v'\in S(K,v)$, we have that $v$ and $K$ lie on the opposite sides of the hyperplane $H$. It follows that the line segment $\overline{vp}$ does not intersect $\Int(K)$. Therefore, $p$ is illuminated by $v$ with respect to $K$, i.e., $p\in C(K,v)\subset\overline{C}(K,v)$.
	
\end{proof}

\begin{lem}\label{illuminate_closed_cap}
	Suppose that the boundary of $K$ is smooth. Let $v\in\mathbb{E}^d\setminus K$.  For any given $u\in\mathbb{S}^{d-1}$. If the direction $u$ illuminates the point $v$ with respect to the cap body $\widehat{K}(v)$, then $u$ illuminates each point in $\overline{C}(K,v)$ with respect to $K$.
\end{lem}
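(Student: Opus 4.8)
The plan is to show that every point $p\in\overline{C}(K,v)$ is illuminated by $u$ with respect to $K$. As in the opening lines of the proof of Lemma~\ref{illuminate_cap}, the hypothesis together with Lemma~\ref{int_cap_body} gives a point $q\in\Int(K)$ and numbers $\alpha\in(0,1)$, $\lambda>0$ with $v+\lambda u=\alpha\cdot q+(1-\alpha)\cdot v$, so that $u=\alpha\lambda^{-1}\cdot(q-v)$ is a positive multiple of $q-v$, where $q$ is an interior point of $K$. If $p\in C(K,v)$, then part (ii) of Lemma~\ref{illuminate_cap} already gives the conclusion, so the only case that needs work is a ``rim'' point $p\in\partial K$ for which the ray $vp$ does not meet $\Int(K)$.

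The key step for such a $p$ is to locate $v$ on the unique supporting hyperplane of $K$ at $p$. Since $p\in K$ and $v\notin K$, convexity forces the whole line $\ell$ through $v$ and $p$ (not just the ray $vp$) to miss $\Int(K)$: a point of $\ell$ on the far side of $v$ lying in $\Int(K)$ would exhibit $v$ as a proper convex combination of that interior point and $p$, hence $v\in\Int(K)$, a contradiction. Now $\Int(K)$ is open and convex and disjoint from the affine set $\ell$, so a standard separation argument (translating the separating hyperplane onto $\ell$ if necessary) yields a hyperplane $H\supseteq\ell$ having $\Int(K)$ strictly on one side; this $H$ supports $K$ and contains $p$, hence by smoothness it is \emph{the} supporting hyperplane of $K$ at $p$, with some outer unit normal $n$. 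Since $v\in\ell\subseteq H$ we have $\langle v-p,n\rangle=0$, while $\langle q-p,n\rangle<0$ because $q$ lies in the open half-space $\{x:\langle x-p,n\rangle<0\}$; therefore
$$\langle u,n\rangle=\alpha\lambda^{-1}\langle q-v,n\rangle=\alpha\lambda^{-1}\bigl(\langle q-p,n\rangle-\langle v-p,n\rangle\bigr)=\alpha\lambda^{-1}\langle q-p,n\rangle<0.$$

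It remains to deduce from $\langle u,n\rangle<0$ that $u$ illuminates $p$, i.e.\ that $p+\varepsilon u\in\Int(K)$ for all sufficiently small $\varepsilon>0$. This is the only genuine obstacle, and it is exactly the place where smoothness cannot be dispensed with (it fails, for instance, at a vertex of a polygon). I would handle it as a short self-contained fact about smooth convex bodies: if $p+\varepsilon u$ avoided $\Int(K)$ for a sequence $\varepsilon\downarrow0$, then either $p+\varepsilon u$ leaves $K$ along a subsequence — in which case convexity makes the whole open ray $\{p+tu:t>0\}$ miss $K$, and a supporting hyperplane at $p$ separating this ray from $\Int(K)$ has outer normal $n$ with $\langle u,n\rangle\ge0$ — or $p+\varepsilon u$ stays in $K$ for all small $\varepsilon>0$, hence (being outside $\Int(K)$ for arbitrarily small $\varepsilon$, and so, by convexity, for all small $\varepsilon$) lies on $\partial K$ along a whole segment issuing from $p$, which then lies in a supporting hyperplane at $p$ and forces $\langle u,n\rangle=0$. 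Either outcome contradicts $\langle u,n\rangle<0$. Hence $u$ illuminates $p$ with respect to $K$, and the lemma follows.
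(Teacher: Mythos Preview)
Your proof is correct and follows essentially the same route as the paper's: reduce to the ``rim'' points using Lemma~\ref{illuminate_cap}, use separation plus smoothness to place $v$ on the unique supporting hyperplane $H$ at $p$, and then exploit that $u$ points strictly to the $K$-side of $H$ to conclude that the ray $p+tu$ enters $\Int(K)$. The only difference is cosmetic---the paper phrases the last step geometrically (the line through $p$ in direction $u$ is not parallel to $H$, hence by uniqueness of $H$ it meets $\Int(K)$), whereas you compute $\langle u,n\rangle<0$ and spell out the contrapositive via a short case analysis; both arguments encode the same separation/uniqueness fact.
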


\begin{proof}
	From the proof of Lemma \ref{illuminate_cap}, we know that there exist $q\in\Int(K)$ and $\lambda>0$ such that
	$$u=\lambda\cdot(q-v).$$
	By Lemma \ref{illuminate_cap}, it suffices to show that the direction $u$ illuminates each point in
	$$\{p\in\partial K\mid ~\mbox{the ray}~vp~\mbox{does not intersect}~\Int(K)\},$$
	with respect to $K$. Suppose that $p\in\partial K$, and the ray $vp$ does not intersect $\Int(K)$. Then the line $vp$ does not intersect $\Int(K)$.  Let $H$ be the support hyperplane of $K$ at the point $p$. Since $K$ is smooth, we get $v\in H$. Suppose that $H^+$ is the closed half space that contains $K$, and $\partial H^+=H$. Since $v,p\in H$ and
	$v+\lambda^{-1}\cdot u=q\in\Int(K)\subset \Int(H^{+})$, we have
	$$p+\lambda^{-1}\cdot u\in \Int(H^{+}).$$
	Let $L$ be the line passing through $p$ and parallel to $u$. Then $L$ is not parallel to $H$.   Since $K$ has a unique support hyperplane at $p$, one obtains that $L$ intersects $\Int(K)$. This implies that the direction $u$ illuminates $p$ with respect to $K$. 
\end{proof}

\begin{rem}\label{replace_smooth_condition}
	In fact, in Lemma \ref{illuminate_closed_cap}, even when the boundary $K$ is not smooth,  we still can prove that the direction $u$ illuminates a  point $q$ in $\{p\in\partial K\mid ~\mbox{the ray}~vp~\mbox{does not intersect}~\Int(K)\}$ with respect to $K$, if $K$ has a unique support hyperplane at the point $q$.
\end{rem}

\begin{lem}\label{possible_positions_of_point_on_boundary_of_K}
 Let $K$ be a $d$-dimensional convex body. Let $V$ be a finite subset of $\mathbb{E}^d\setminus K$ that satisfies the condition in Lemma \ref{spiky_convex_condition}. Suppose that $p\in\partial K$. Then one of the following statements is true.
 \begin{enumerate}[label=(\roman*)]
 	\item $p\not\in \partial S(K,v)$ for all $v\in V$.
 	\item $p\in \partial \widehat{K}(v)\cap \partial S(K,v)$ for some $v\in V$.
 	\item $p\in C(K,v)$ for some $v\in V$.
 \end{enumerate}
 \begin{proof}
 	Suppose that there is a point $v\in V$ such that $p\in \partial S(K,v)$ and $p\not\in \partial \widehat{K}(v)$. Then $p\in\Int(\widehat{K}(v))$. By Corollary \ref{cap_equal_to_int_spkie}, we get $p\in \partial K\cap \Int(\widehat{K}(v))=C(K,v)$.
 \end{proof}
 
\end{lem}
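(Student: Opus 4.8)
The plan is to prove the lemma by a short case analysis, keyed on whether statement (i) fails, and to close out the two remaining alternatives using Corollary~\ref{cap_equal_to_int_spkie}.

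First I would dispose of the trivial branch: if $p\notin\partial S(K,v)$ for every $v\in V$, then (i) holds and there is nothing more to do. So suppose (i) fails, i.e.\ there is some $v\in V$ with $p\in\partial S(K,v)$. The one point that needs a moment's care is to check that $p$ actually lies in $\widehat{K}(v)$. For this I would note that $S(K,v)=\widehat{K}(v)\setminus K$, and that $\widehat{K}(v)=\conv(K\cup\{v\})$ is a convex body (by Lemma~\ref{spiky_convex_condition} with $V=\{v\}$), hence closed; therefore $\partial S(K,v)\subseteq\cl(S(K,v))\subseteq\widehat{K}(v)$, so $p\in\widehat{K}(v)$.

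Next I would split according to the position of $p$ in the convex body $\widehat{K}(v)$, which—now that $p\in\widehat{K}(v)$—is a genuine dichotomy. If $p\in\partial\widehat{K}(v)$, then $p\in\partial\widehat{K}(v)\cap\partial S(K,v)$, which is precisely statement (ii). Otherwise $p\in\Int(\widehat{K}(v))$; combining this with the hypothesis $p\in\partial K$ and Corollary~\ref{cap_equal_to_int_spkie}, which says $C(K,v)=\partial K\cap\Int(\widehat{K}(v))$, yields $p\in C(K,v)$, i.e.\ statement (iii). This exhausts all cases.

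I do not anticipate a real obstacle here: the statement is essentially a bookkeeping trichotomy, and the only substantive ingredient is Corollary~\ref{cap_equal_to_int_spkie}. The only spot where one could slip is forgetting to justify $\partial S(K,v)\subseteq\widehat{K}(v)$, without which the ``boundary of $\widehat{K}(v)$ versus interior of $\widehat{K}(v)$'' alternative would not be exhaustive.
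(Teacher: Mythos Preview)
Your argument is correct and follows essentially the same route as the paper: assume (i) fails, pick $v$ with $p\in\partial S(K,v)$, and then split on whether $p\in\partial\widehat{K}(v)$ or $p\in\Int(\widehat{K}(v))$, invoking Corollary~\ref{cap_equal_to_int_spkie} in the latter case. Your explicit justification that $\partial S(K,v)\subseteq\cl(S(K,v))\subseteq\widehat{K}(v)$ is a detail the paper leaves implicit, but otherwise the proofs coincide.
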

 For any point $p\in\mathbb{E}^d$ and $\delta>0$, we denote by $B(p,\delta)$ the ball in $\mathbb{E}^d$ with center $p$ and radius $\delta$. 
 
 \begin{lem}\label{boundary_of_spike_ray_not_intersect_int_K}
 	For any $p\in\partial K\cap\partial\widehat{K}(v)\cap\partial(S(K,v))$, we have that the ray $vp$ does not intersect $\Int(K)$.
 \end{lem}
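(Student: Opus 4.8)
The plan is a proof by contradiction: assume the ray $vp$ meets $\Int(K)$. Since $\Int(K)$ is convex and $p\notin\Int(K)$, the portion of the ray lying in $\Int(K)$ is a relatively open subinterval of the ray avoiding $p$, so we may pick a point $q\in\Int(K)$ on the ray that lies either (Case A) strictly beyond $p$ --- so that $p$ is strictly between $v$ and $q$ --- or (Case B) strictly between $v$ and $p$. In Case A, $p$ is a proper convex combination of $v\in\widehat{K}(v)$ and $q\in\Int(K)\subseteq\Int(\widehat{K}(v))$, so the line-segment principle for convex sets gives $p\in\Int(\widehat{K}(v))$, contradicting $p\in\partial\widehat{K}(v)$. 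Hence everything reduces to ruling out Case B, and this is where the hypothesis $p\in\partial(S(K,v))$ enters.

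In Case B, write $q=(1-t)v+tp$ with $t\in(0,1)$ and fix $\rho>0$ with $B(q,\rho)\subseteq\Int(K)$. Using $p\in\partial\widehat{K}(v)$, choose a support hyperplane $H=\{\ell=c\}$ of $\widehat{K}(v)$ at $p$, so that $\ell(p)=c$, $\ell\ge c$ on $\widehat{K}(v)$, and $\ell>c$ on $\Int(\widehat{K}(v))$. Since $q\in\Int(K)\subseteq\Int(\widehat{K}(v))$ we get $\ell(q)>c$, and as $\ell(q)=(1-t)\ell(v)+tc$ this forces $\ell(v)>c$, i.e. $v$ lies strictly on the interior side of $H$. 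I will show this implies that $\widehat{K}(v)$ coincides with $K$ near $p$, i.e. $B(p,\delta)\cap\widehat{K}(v)\subseteq K$ for some $\delta>0$; this contradicts $p\in\partial(S(K,v))\subseteq\cl\bigl(\widehat{K}(v)\setminus K\bigr)$ and finishes the proof.

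To prove the neighborhood claim, suppose for contradiction there are $x_n\in\widehat{K}(v)\setminus K$ with $x_n\to p$, and write $x_n=(1-\mu_n)k_n+\mu_n v$ with $k_n\in K$ and $\mu_n\in(0,1)$ (here $\mu_n>0$ since $x_n\notin K$, and $\mu_n<1$ eventually since $x_n\to p\ne v$). Applying $\ell$ and using $\ell(k_n)\ge c$, $\ell(v)>c$ and $\ell(x_n)\to\ell(p)=c$ yields $\mu_n\to0$, hence $k_n=(x_n-\mu_n v)/(1-\mu_n)\to p$. Now consider the segment $\sigma_n(\tau)=(1-\tau)k_n+\tau v$: one has $\sigma_n(0)=k_n\in K$ and $\sigma_n(1-t)=tk_n+(1-t)v\to tp+(1-t)v=q\in\Int(K)$, so $\sigma_n(1-t)\in B(q,\rho)\subseteq\Int(K)$ for all large $n$. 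Since $\mu_n\to0<1-t$, for large $n$ the point $x_n=\sigma_n(\mu_n)$ lies strictly between $\sigma_n(0)\in K$ and $\sigma_n(1-t)\in\Int(K)$, so the line-segment principle gives $x_n\in\Int(K)$, contradicting $x_n\notin K$.

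The main obstacle is Case B, and in particular that it cannot be dispensed with: for a ``far'' boundary point of a cap body the ray $vp$ really does cross $\Int(K)$, so one is forced to exploit that $p$ also lies on the boundary of the spike. The key leverage is the strict inequality $\ell(v)>c$ extracted from $p\in\partial\widehat{K}(v)$; this is exactly what makes $\mu_n\to0$ quickly enough, relative to the fixed scale $1-t$, for the fixed interior ball $B(q,\rho)$ around $q$ to swallow the points $x_n$ sitting on the segments from $k_n$ towards $v$.
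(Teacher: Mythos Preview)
Your proof is correct and shares the paper's overall skeleton---proof by contradiction, with the same two-case split according to whether the interior intersection of the ray lies beyond $p$ (your Case~A, the paper's ``segment $\overline{vp}$ does not intersect $\Int(K)$'') or between $v$ and $p$ (Case~B). Case~A is handled identically in both.

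In Case~B the two arguments diverge. The paper works with support hyperplanes of $K$ at \emph{all} nearby boundary points $q\in B(p,\delta)\cap\partial K$: since each segment $\overline{vq}$ meets $\Int(K)$, the apex $v$ lies in every $H_q^+$, hence so does $S(K,v)\subset\conv(K\cup\{v\})$; one then argues that $B(p,\delta)\setminus K$ misses $\bigcap_q H_q^+$, so the spike is absent near $p$. Your route instead uses a \emph{single} support hyperplane $H=\{\ell=c\}$ of $\widehat{K}(v)$ at $p$, extracts the strict inequality $\ell(v)>c$ from $q\in\Int(\widehat{K}(v))$, and runs a sequential argument: any $x_n\in S(K,v)$ with $x_n\to p$ forces $\mu_n\to 0$ (via the linear functional), whence $k_n\to p$ and the segment from $k_n$ toward $v$ enters $\Int(K)$ near $q$ before reaching $x_n$, pushing $x_n$ into $\Int(K)$ by the line-segment principle.

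What each buys: the paper's picture is more directly geometric but leaves the claim ``$B(p,\delta)\setminus K$ does not overlap $\bigcap_{q\in B(p,\delta)\cap\partial K}H_q^+$'' to the reader (it needs a metric-projection observation to make fully precise). Your argument avoids that local intersection claim entirely by trading the family of hyperplanes for one hyperplane of the cap body plus a clean limiting computation; it is self-contained and uses only the line-segment principle and continuity.
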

 \begin{proof}
  Let $p\in\partial K\cap\partial\widehat{K}(v)\cap\partial(S(K,v))$.	For the sake of contradiction, suppose that the ray $vp$ intersects $\Int(K)$. If the line segment $\overline{vp}$ does not intersect $\Int(K)$, then   $p\in C(K,v)\subset\Int(\widehat{K}(v))$ which is impossible, since $p\in \partial\widehat{K}(v)$. Now suppose that the line segment $\overline{vp}$ intersects $\Int(K)$. By convexity and continuity, there exists $\delta>0$ such that for each $q\in B(p,\delta)\cap K$, the line segment $\overline{vq}$ intersects $\Int(K)$. For $q\in\partial K$, let $H_q$ be a support hyperplane of $K$ at the point $q$. Denote by $H_q^+$ the closed half space that contains $K$, and $\partial H_q^+=H_q$. It is clear that $K$ and $v$ are contained in $\bigcap_{q\in B(p,\delta)\cap \partial K} H_q^+$. It follows that $S(K,v)$ is a subset of $\bigcap_{q\in B(p,\delta)\cap \partial K} H_q^+$. Furthermore, we have that $B(p,\delta)\setminus K$ does not overlap with $\bigcap_{q\in B(p,\delta)\cap \partial K} H_q^+$.  One can deduce that $B(p,\delta)$ does not contain any point in $S(K,v)$, which is a contradiction, since $p\in \partial(S(K,v))$.
 \end{proof}
 
 \begin{lem}\label{not_on_boundary_of_spike}
 	Let $K$ be a $d$-dimensional convex body. Let $V$ be a finite subset of $\mathbb{E}^d\setminus K$ that satisfies the condition in Lemma \ref{spiky_convex_condition}. Let $p\in \partial K\cap\partial\widehat{K}(V)$ and $u\in\mathbb{S}^{d-1}$. Suppose that $p\not\in \partial(S(K,v))$ for all $v\in V$. Then the direction $u$ illuminates $p$ with respect to $K$ if and only if $u$ illuminates $p$ with respect to $\widehat{K}(V)$.
 \end{lem}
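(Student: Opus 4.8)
The plan is to prove the two implications separately. The direction ``$u$ illuminates $p$ with respect to $K$ $\Rightarrow$ $u$ illuminates $p$ with respect to $\widehat{K}(V)$'' is immediate and uses neither the hypothesis $p\notin\partial S(K,v)$ nor smoothness: if $q=p+\lambda u\in\Int(K)$ for some $\lambda>0$, then since $K\subseteq\widehat{K}(V)$ and $\Int(K)$ is open we have $\Int(K)\subseteq\Int(\widehat{K}(V))$, so $q\in\Int(\widehat{K}(V))$. All the work is in the reverse direction.

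The key step I would establish is that $\widehat{K}(V)$ and $K$ coincide in a small ball around $p$. By Lemma \ref{spiky_convex_condition}, $\widehat{K}(V)=\conv(K\cup V)=\bigcup_{v\in V}\widehat{K}(v)$, so it is enough to treat each $\widehat{K}(v)$ separately. Fix $v\in V$. Since $p\in\partial K\subseteq K$ and $S(K,v)=\widehat{K}(v)\setminus K$ is disjoint from $K$, we get $p\notin S(K,v)$, hence $p\notin\Int(S(K,v))$; together with the hypothesis $p\notin\partial S(K,v)$ this yields $p\notin\cl(S(K,v))$. As $\cl(S(K,v))$ is closed, there is $\delta_v>0$ with $B(p,\delta_v)\cap S(K,v)=\emptyset$, i.e.\ $B(p,\delta_v)\cap\widehat{K}(v)\subseteq K$. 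Setting $\delta=\min_{v\in V}\delta_v>0$ (possible since $V$ is finite) and taking the union over $v\in V$ gives $B(p,\delta)\cap\widehat{K}(V)\subseteq K$; combined with $K\subseteq\widehat{K}(V)$ this is the equality $B(p,\delta)\cap\widehat{K}(V)=B(p,\delta)\cap K$. Since $B(p,\delta)$ is open, passing to interiors on this open set gives $B(p,\delta)\cap\Int(\widehat{K}(V))=B(p,\delta)\cap\Int(K)$.

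To finish the reverse direction, suppose $u$ illuminates $p$ with respect to $\widehat{K}(V)$, so $p+\lambda_0u\in\Int(\widehat{K}(V))$ for some $\lambda_0>0$. Because $\widehat{K}(V)$ is convex (Lemma \ref{spiky_convex_condition}) and $p\in\widehat{K}(V)$, the half-open segment $\{\,p+t\lambda_0u:0<t\le1\,\}$ lies in $\Int(\widehat{K}(V))$; choosing $t$ so small that $\lambda:=t\lambda_0<\delta$ puts $p+\lambda u$ in $B(p,\delta)\cap\Int(\widehat{K}(V))=B(p,\delta)\cap\Int(K)\subseteq\Int(K)$, which says exactly that $u$ illuminates $p$ with respect to $K$.

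The only points requiring care are the topological deduction that $p\notin\partial S(K,v)$ forces an entire ball around $p$ to avoid $S(K,v)$ (which is where the location of $p$ relative to $K$ — case (i) of Lemma \ref{possible_positions_of_point_on_boundary_of_K} — is used), and the reliance on Lemma \ref{spiky_convex_condition} to pass from the individual spiky bodies $\widehat{K}(v)$ to $\widehat{K}(V)$ and to know $\widehat{K}(V)$ is convex; I do not expect either to present a genuine obstacle.
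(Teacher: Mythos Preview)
Your proof is correct and follows exactly the paper's approach: establish the local coincidence $B(p,\delta)\cap\widehat{K}(V)=B(p,\delta)\cap K$ from the hypothesis $p\notin\partial S(K,v)$ and the finiteness of $V$, then read off the equivalence of illumination. The paper compresses all of this into two sentences, merely asserting the local equality and that ``this implies our desired result''; you have simply supplied the details (that $p\notin\cl(S(K,v))$, the passage to interiors, and the convexity argument pushing the illuminating point close to $p$) that the paper leaves to the reader.
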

 \begin{proof}
 	Since $p\not\in \partial(S(K,v))$ for all $v\in V$ and $V$ is finite, there exists $\delta>0$ such that
 	$B(p,\delta)\cap\widehat{K}(V)= B(p,\delta)\cap K$.  This implies our desired result.
 \end{proof}
 
  Let $V$ be a finite (or countable) subset of $\mathbb{E}^d\setminus K$ that satisfies the condition in Lemma \ref{spiky_convex_condition}. For any given $v\in V$ and $u\in\mathbb{S}^{d-1}$. Since $v\not\in \widehat{K}(V\setminus{v})$ and $\widehat{K}(V\setminus{v})$ is closed, there exists $\delta>0$ such that
  $B(v,\delta)\cap\widehat{K}(V)= B(v,\delta)\cap \widehat{K}(v)$. Hence,  the direction $u$ illuminates $v$ with respect to $\widehat{K}(V)$ if and only if $u$ illuminates $v$ with respect to $\widehat{K}(v)$.
  
\begin{lem}\label{sub_cap_body_must_be_illumnated}
	Let $K$ be a $d$-dimensional convex body. Let $V_1$ and $V_2$ be two finite subsets of $\mathbb{E}^d\setminus K$ such that both satisfy the condition in Lemma \ref{spiky_convex_condition}. Suppose that $K$ has a unique support hyperplane at each point in $$\bigcup_{v\in V_1}\left(\partial K\cap\partial\widehat{K}(v)\cap\partial(S(K,v))\right).$$   If $\widehat{K}(V_2)\subset \widehat{K}(V_1)$ and the cap body $\widehat{K}(V_1)$ is $m$-fold illuminated by a multiset $U$ of directions in $\mathbb{S}^{d-1}$, then $\widehat{K}(V_2)$ is also $m$-fold illuminated by $U$. Therefore,
	$$I^m\left(\widehat{K}(V_2)\right)\leq I^m\left(\widehat{K}(V_1)\right).$$
\end{lem}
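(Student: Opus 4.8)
The plan is to fix an arbitrary boundary point $p\in\partial\widehat{K}(V_2)$, exhibit at least $m$ directions of $U$ that illuminate $p$ with respect to $\widehat{K}(V_2)$, and then deduce the inequality by applying the conclusion to a minimum-size $m$-fold illuminating multiset $U$ of $\widehat{K}(V_1)$. Two preliminary facts are used throughout. First, as the paragraph preceding the lemma shows, for each $v\in V_i$ we have $\widehat{K}(v)\subset\widehat{K}(V_i)$ with $\Int\widehat{K}(v)\subset\Int\widehat{K}(V_i)$, $v\in\partial\widehat{K}(V_i)$, and ``$u$ illuminates $v$ with respect to $\widehat{K}(V_i)$'' is equivalent to ``$u$ illuminates $v$ with respect to $\widehat{K}(v)$''; in particular every $v_1\in V_1$ is illuminated by at least $m$ directions of $U$ with respect to $\widehat{K}(v_1)$. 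Second, I would prove the auxiliary claim that \emph{every $v_2\in V_2$ is illuminated by at least $m$ directions of $U$ with respect to $\widehat{K}(v_2)$}: since $v_2\in V_2\subset\widehat{K}(V_2)\subset\widehat{K}(V_1)=\bigcup_{v\in V_1}\widehat{K}(v)$ and $v_2\notin K$, there is $v_1\in V_1$ with $v_2\in S(K,v_1)$, and Lemma~\ref{illuminate_inside_spike}, applied with $s=v_2$, carries each of the $\ge m$ directions illuminating $v_1$ to an illumination of $v_2$ with respect to $\widehat{K}(v_2)$.

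With these in hand I would split on whether $p\in K$. If $p\notin K$, then $p\in\widehat{K}(V_2)=\bigcup_{v\in V_2}\widehat{K}(v)$ forces $p\in S(K,v_2)$ for some $v_2\in V_2$, and since $p\in\partial\widehat{K}(V_2)$ and $\widehat{K}(v_2)\subset\widehat{K}(V_2)$ it follows that $p\in S(K,v_2)\cap\partial\widehat{K}(v_2)$. The auxiliary claim gives $\ge m$ directions of $U$ illuminating $v_2$ with respect to $\widehat{K}(v_2)$, and part~(i) of Lemma~\ref{illuminate_cap} makes each of them illuminate $p$ with respect to $\widehat{K}(v_2)$, hence with respect to $\widehat{K}(V_2)$.

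If instead $p\in\partial K\cap\partial\widehat{K}(V_2)$, then because $\Int K\subset\Int\widehat{K}(V_2)$ it suffices to illuminate $p$ at least $m$ times \emph{with respect to $K$}. Here I would apply Lemma~\ref{possible_positions_of_point_on_boundary_of_K} with $V=V_1$ to $p\in\partial K$, which gives three cases. (a) $p\in C(K,v_1)$ for some $v_1\in V_1$: the $\ge m$ directions illuminating $v_1$ with respect to $\widehat{K}(v_1)$ illuminate every point of $C(K,v_1)$ — in particular $p$ — with respect to $K$, by part~(ii) of Lemma~\ref{illuminate_cap}. (b) $p\in\partial\widehat{K}(v_1)\cap\partial S(K,v_1)$ for some $v_1\in V_1$: then $p$ belongs to the set on which $K$ is assumed to have a unique support hyperplane, by Lemma~\ref{boundary_of_spike_ray_not_intersect_int_K} the ray $v_1p$ does not meet $\Int K$, and Remark~\ref{replace_smooth_condition} (the non-smooth version of Lemma~\ref{illuminate_closed_cap}) carries each of the $\ge m$ directions illuminating $v_1$ to an illumination of $p$ with respect to $K$. (c) $p\notin\partial S(K,v_1)$ for every $v_1\in V_1$: since $p\in K$ we also have $p\notin S(K,v_1)$, so $p$ avoids the closed set $\cl(S(K,v_1))$ for each of the finitely many $v_1$, and hence a ball $B(p,\delta)$ misses all spikes; as $\widehat{K}(V_1)=K\cup\bigcup_{v_1\in V_1}S(K,v_1)$, if $p$ were in $\Int\widehat{K}(V_1)$ we could shrink $\delta$ to get $B(p,\delta)\subset\widehat{K}(V_1)$ and hence $B(p,\delta)\subset K$, contradicting $p\in\partial K$; so $p\in\partial\widehat{K}(V_1)$ (using $p\in\widehat{K}(V_2)\subset\widehat{K}(V_1)$), and Lemma~\ref{not_on_boundary_of_spike} with $V=V_1$ makes illumination of $p$ with respect to $K$ equivalent to illumination with respect to $\widehat{K}(V_1)$, which holds for $\ge m$ directions of $U$. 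In each case $p$ is illuminated $\ge m$ times with respect to $K$, hence with respect to $\widehat{K}(V_2)$. This covers all $p\in\partial\widehat{K}(V_2)$, so $U$ $m$-fold illuminates $\widehat{K}(V_2)$, and $I^m(\widehat{K}(V_2))\le I^m(\widehat{K}(V_1))$ follows.

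I expect the delicate point to be case~(b): at a ``rim'' point $p\in\partial K\cap\partial\widehat{K}(v_1)\cap\partial S(K,v_1)$, where a spike of $V_1$ peels away from $K$, a direction illuminating the apex $v_1$ need not point into $K$ unless $K$ has a unique support hyperplane at $p$ — this is precisely what Lemma~\ref{illuminate_closed_cap} and Remark~\ref{replace_smooth_condition} are designed to handle, and it is why the hypothesis imposes a smoothness condition on $K$ exactly along the rims of the $V_1$-spikes (the $V_2$-spikes never need it, as they are always illuminated from the apex inward via Lemma~\ref{illuminate_cap}(i)). A secondary subtlety is that $p$, or a vertex $v_2\in V_2$, may lie in $\Int\widehat{K}(V_1)$, where the hypothesis on $U$ says nothing directly; this is why the argument is routed through a genuine boundary vertex $v_1\in V_1$ using Lemma~\ref{illuminate_inside_spike}.
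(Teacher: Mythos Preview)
Your proof is correct and follows essentially the same route as the paper's: both split on whether $p\in S(K,v_2)$ for some $v_2\in V_2$ (handled via Lemma~\ref{illuminate_inside_spike} and Lemma~\ref{illuminate_cap}(i)) or $p\in\partial K$ (handled via the trichotomy of Lemma~\ref{possible_positions_of_point_on_boundary_of_K} applied with $V=V_1$, using Lemmas~\ref{not_on_boundary_of_spike}, \ref{boundary_of_spike_ray_not_intersect_int_K} with Remark~\ref{replace_smooth_condition}, and Lemma~\ref{illuminate_cap}(ii), respectively). Your case~(c) even supplies the justification that $p\in\partial\widehat{K}(V_1)$, which the paper states without argument.
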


\begin{proof}
	 Suppose that $p\in\partial\widehat{K}(V_2)$. Then either $p\in\partial K$, or $p\in S(K,v)$ for some $v\in V_2$.
	 
	 \textbf{Case 1:} $p\in S(K,v)$ for some $v\in V_2$. Since $\widehat{K}(V_2)\subset\widehat{K}(V_1)$, there exists $v' \in V_1$ such that $v\in S(K,v')$. Because $\widehat{K}(V_1)$ is $m$-fold illuminated by $U$, we have that $v'$ is $m$-fold illuminated by $U$ with respect to $\widehat{K}(V_1)$, and hence $v'$ is $m$-fold illuminated by $U$ with respect to $\widehat{K}(v')$. By Lemma \ref{illuminate_inside_spike}, one obtains that $v$ is $m$-fold illuminated by $U$ with respect to $\widehat{K}(v)$. It follows, by Lemma \ref{illuminate_cap}, that  $p$ is $m$-fold illuminated by $U$ with respect to $\widehat{K}(v)$. Since $\Int\left(\widehat{K}(v)\right)\subset\Int\left(\widehat{K}(V_2)\right)$,  $p$ is $m$-fold illuminated by $U$ with respect to $\widehat{K}(V_2)$.
	 
	\textbf{Case 2:} $p\in\partial K$. By Lemma \ref{possible_positions_of_point_on_boundary_of_K}, we consider the following three cases.
	 \begin{itemize}
	 	\item[] \textbf{Case 2.1:} $p\not\in \partial S(K,v)$ for all $v\in V_1$. Then $p\in\partial \widehat{K}(V_1)$, and hence $p$ is $m$-fold illuminated by $U$ with respect to $\widehat{K}(V_1)$. By Lemma \ref{not_on_boundary_of_spike}, we obtain that  $p$ is $m$-fold illuminated by $U$ with respect to $K$.
	 	
	 	\item[] \textbf{Case 2.2:} $p\in\partial \widehat{K}(v)\cap\partial S(K,v)$ for some $v\in V_1$. Then $K$ has a unique support hyperplane $H_p$ at the point $p$. By Lemma \ref{boundary_of_spike_ray_not_intersect_int_K}, we have that the ray $vp$ does not intersect $\Int(K)$. Since $v$ is $m$-fold illuminated by $U$ with respect to $\widehat{K}(v)$, it follows, from Remark \ref{replace_smooth_condition}, that $p$ is $m$-fold illuminated by $U$ with respect to $K$.
	 	
	 	\item[] \textbf{Case 2.3:} $p\in C(K,v)$ for some $v\in V_1$. Since $v$ is $m$-fold illuminated by $U$ with respect to $\widehat{K}(v)$, by Lemma \ref{illuminate_cap}, we obtain that $p$ is $m$-fold illuminated by $U$ with respect to $K$.
	 \end{itemize}
	 From the above three sub-cases, we can conclude that if $p\in\partial K$, then $p$ is $m$-fold illuminated by $U$ with respect to $\widehat{K}(V_2)$, since $\Int(K)\subset \Int\left(\widehat{K}(V_2)\right)$.
\end{proof}
\begin{rem}
	In Lemma \ref{sub_cap_body_must_be_illumnated}, $V_2$ could be empty set. 
\end{rem}

\section{Main results}

\subsection{A lower bound of $m$-fold illumination numbers of $d$-dimensional convex bodies}\label{subsection_lower_bound}
Let $K$ be a $d$-dimensional convex body. 
Suppose that $K$ is $m$-fold illuminated by a multiset $U$ of directions in $\mathbb{S}^{d-1}$. Let $V=\{v_1,\ldots,v_{d-1}\}\subset U$, and let $H$ be a hyperplane parallel to all directions in $V$. Denote by $H_1$ and $H_2$ two distinct support hyperplanes of $K$ parallel to $H$. Let $p_1\in H_1\cap K$ and $p_2\in H_2\cap K$. Then, the points $p_1$ and $p_2$ both are not illuminated by $\{v_1,\ldots,v_{d-1}\}$. Furthermore,  $p_1$ and $p_2$ cannot be simultaneously illuminated  by any direction $u\in U$. Note that $p_1$ and $p_2$ both must be illuminated by at least $m$ directions. It follows that $U$ contains at least $(d-1)+2m$ directions, and hence $$I^m(K)\geq 2m+(d-1).$$ This completes the proof of  Theorem \ref{main_lower_bound}.

\subsection{$m$-fold illumination numbers of two-dimensional convex bodies with smooth boundary}\label{subsection_smooth}
Suppose that $K$ is a $2$-dimensional convex body with smooth boundary. One can construct an equiangular  $(2m+1)$-sided polygon $P=p_1p_2\cdots p_{2m+1}$ such that for $i=1,2,\ldots,2m+1$, the side $p_{i}p_{i+1}$ of $P$ touches the boundary of $K$, here we define $p_{k+2m+1}=p_k$ for all integers $k$. Clearly, the polygon $P$ can be seen as the cap body $\widehat{K}(\{p_1,p_2,\ldots,p_{2m+1}\})$. Let $v_i$ be the intersection of the lines $p_{i}p_{i+1}$  and $p_{i+m}p_{i+m+1}$. Observe that each exterior angle of $P$ is equal to $2\pi/(2m+1)$, and  $2m\pi/(2m+1)<\pi$. One can deduce that  $p_{i+1}, p_{i+2},\ldots,p_{i+m}\in S(K,v_i)$. Let $u_i$ be a direction that illuminates $v_i$ with respect to $\widehat{K}(v_i)$.  By Lemma \ref{illuminate_inside_spike}, we have that the direction $u_i$ illuminates the points $p_{i+k}$ with respect to $\widehat{K}(p_{i+k})$, for all $k=1,2,\ldots,m$. Therefore, by Lemma \ref{illuminate_closed_cap}, one obtains that $u_i$ illuminates all points in the closed cap $\overline{C}(K,p_{i+k})$ with respect to $K$, for $k=1,2,\ldots,m$. Hence, $\overline{C}(K,p_i)$ is illuminated by the direction $u_{i-k}$ with respect to $K$, for all $k=1,2,\ldots,m$. Note that $$\partial K=\bigcup_{i=1}^{2m+1}\overline{C}(K,p_i).$$
It follows immediately that $K$  is $m$-fold illuminated by the set of directions $\{u_1,u_2,\ldots,u_{2m+1}\}$. Hence, $I^m(K)\leq 2m+1$. On the other hand, by Theorem \ref{main_lower_bound}, we know that $I^m(K)\geq 2m+1$. This completes the proof of Theorem \ref{main_smooth_two_dim}. 

\subsection{$m$-fold illumination numbers of convex polygons}\label{subsection_polygon}
Let $P$ be a convex polygon. If $P$ is a parallelogram, then $I^m(P)=4m$. If $P$ is a triangle, then $I^m(P)=3m$. From the proof of Theorem \ref{main_smooth_two_dim}, one may observe that $I^m(P)=2m+1$, if $P$ is an equiangular convex $(2m+1)$-sided polygon. In general,  by modifying the arguments used in the proof of Theorem \ref{main_smooth_two_dim} (see Section \ref{subsection_smooth}), one can easily prove the following useful lemma.
\begin{lem}\label{convex_2m_1_polygon}
	Let $m\geq 2$. Let $P=p_1p_2\cdots p_{2m+1}$ be a convex $(2m+1)$-sided polygon. For any integer $i$, we define $p_{i+2m+1}=p_i$, and denote by $\alpha_i$ the radian measure of exterior angle of $P$ at the vertex $p_i$. Suppose that $\alpha_k+\alpha_{k+1}+\cdots+\alpha_{k+m-1}<\pi$, for all integer $k$. Then $I^m(P)=2m+1$.
\end{lem}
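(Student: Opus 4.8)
The plan is to follow the scheme of the proof of Theorem \ref{main_smooth_two_dim}, replacing the smooth body $K$ by the polygon $P$ itself and being careful that $P$ is already a polygon, so the role played there by "$K$" and by the cap body $\widehat{K}(V)$ collapses. First I would view $P$ as a cap body of a suitable smaller convex body, or more directly argue combinatorially on the edges: for each $i$, let $\ell_i$ denote the line through the edge $p_ip_{i+1}$, and let $v_i=\ell_i\cap\ell_{i+m}$ be the intersection of the line of edge $i$ with the line of edge $i+m$. The hypothesis $\alpha_k+\cdots+\alpha_{k+m-1}<\pi$ says precisely that, walking from edge $\ell_i$ to edge $\ell_{i+m}$ along the boundary, the total turning is less than $\pi$, so the two lines meet on the far side of $P$ and the vertices $p_{i+1},\dots,p_{i+m}$ all lie in the triangle-like region cut off, i.e.\ $p_{i+1},\dots,p_{i+m}\in S(P',v_i)$ for the convex body $P'=\conv\{p_j:\ j\ne i+1,\dots,i+m\}$ bounded by the other edges; equivalently each such $p_{i+k}$ is illuminated, with respect to $P$, by any direction that illuminates $v_i$ with respect to $\widehat{P'}(v_i)$.

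The key step is then exactly as in Section \ref{subsection_smooth}: choose for each $i$ a direction $u_i\in\mathbb{S}^1$ that illuminates $v_i$ with respect to the cap body $\widehat{P'}(v_i)$. By Lemma \ref{illuminate_inside_spike}, $u_i$ illuminates each $p_{i+k}$ ($k=1,\dots,m$) with respect to $\widehat{P'}(p_{i+k})$. Since at each vertex $p_{i+k}$ the polygon $P$ has two support lines but each relevant boundary point other than the vertex itself has a unique support line, I would invoke Lemma \ref{illuminate_closed_cap} together with Remark \ref{replace_smooth_condition} to conclude that $u_i$ illuminates every point of the closed cap $\overline{C}(P',p_{i+k})$ with respect to $P$, and that the vertex $p_{i+k}$ itself is illuminated too. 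Reindexing, the closed cap $\overline{C}(P',p_i)$ is illuminated by each of the $m$ directions $u_{i-1},u_{i-2},\dots,u_{i-m}$. Since $\partial P=\bigcup_{i=1}^{2m+1}\overline{C}(P',p_i)$, every boundary point of $P$ is illuminated by at least $m$ of the $2m+1$ directions $u_1,\dots,u_{2m+1}$, giving $I^m(P)\le 2m+1$. The matching lower bound $I^m(P)\ge 2m+1$ is immediate from Theorem \ref{main_lower_bound} with $d=2$.

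The main obstacle I anticipate is the bookkeeping at the vertices: a vertex $p_j$ of $P$ does not have a unique support line, so Lemma \ref{illuminate_closed_cap} does not apply to it verbatim, and one must check separately that $p_j$ is $m$-fold illuminated. This is handled by noting that $p_j$ is an interior point of each spike $S(P',v_i)$ for the $m$ values $i=j-1,\dots,j-m$ (not an endpoint of it, because the turning hypothesis is strict), so for those $i$ the point $p_j$ lies in the open cap $C(P',v_i)$ rather than on its boundary, and Lemma \ref{illuminate_cap}(ii) applies directly. One also must verify that $v_i\notin P$ (so that $\widehat{P'}(v_i)$ is a genuine cap body and Lemma \ref{illuminate_inside_spike} applies), which again follows from $\alpha_i+\cdots+\alpha_{i+m-1}<\pi$ and the fact that the complementary turning $\alpha_{i+m}+\cdots+\alpha_{i-1}$ across the other $m+1$ edges exceeds $2\pi-\pi=\pi$, forcing the lines $\ell_i$ and $\ell_{i+m}$ to cross outside $P$. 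The remaining details are the same routine continuity and convexity arguments already used for the smooth case.
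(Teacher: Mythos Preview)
Your overall plan matches the paper's: take $v_i=\ell_i\cap\ell_{i+m}$, choose $u_i$ illuminating $v_i$, and show that $u_i$ illuminates $p_{i+1},\dots,p_{i+m}$; the angle hypothesis $\alpha_{i+1}+\cdots+\alpha_{i+m}<\pi$ plays exactly the role that $2m\pi/(2m+1)<\pi$ plays in Section~\ref{subsection_smooth}, and the lower bound from Theorem~\ref{main_lower_bound} is fine.

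The problem is your auxiliary body $P'=\conv\{p_j:j\ne i+1,\dots,i+m\}$. First, it depends on $i$, so statements like ``$\overline{C}(P',p_i)$ is illuminated by $u_{i-1},\dots,u_{i-m}$'' and ``$\partial P=\bigcup_i\overline{C}(P',p_i)$'' are not well-posed: for each of those $m$ directions the base body is a \emph{different} polygon $P'_{i-k}$. Second, even for a fixed $i$, the closed cap $\overline{C}(P',p_{i+k})$ is a subset of $\partial P'$, not of $\partial P$; the part of $\partial P'$ visible from $p_{i+k}$ is essentially the chord $[p_i,p_{i+m+1}]$, which is not an edge of $P$, so these caps do not cover $\partial P$. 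Your fallback claim that $p_j$ lies in the open cap $C(P',v_i)$ is likewise false, since $C(P',v_i)\subset\partial P'$ while $p_j\notin P'$; and $p_j$ is not an interior point of the spike $S(P',v_i)$ when $i=j-1$ or $i=j-m$ (it sits on the segment $[p_{j-1},v_{j-1}]$, respectively $[v_{j-m},p_{j+1}]$).

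There are two clean repairs. The one closest to Section~\ref{subsection_smooth}, and presumably what the paper intends, is to replace your $i$-dependent $P'$ by a \emph{fixed} smooth convex body $K$ inscribed in $P$ and touching every edge, so that $P=\widehat{K}(\{p_1,\dots,p_{2m+1}\})$; then the argument of Theorem~\ref{main_smooth_two_dim} runs verbatim with the given $\alpha_i$, and Lemma~\ref{illuminate_cap}(i) (whose proof lands the translated point in $\Int(K)\subset\Int(P)$) illuminates the edges of $P$ directly. Alternatively, keep your $P'_i$ only for the vertex step: Lemma~\ref{illuminate_inside_spike} (which needs just $p_j\in S(P'_i,v_i)$, not interiority) already gives $p_{i+k}+\varepsilon u_i\in\Int(P'_i)\subset\Int(P)$, so every vertex $p_j$ is $m$-fold illuminated with respect to $P$ by $u_{j-1},\dots,u_{j-m}$. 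For a convex polygon that is all you need, because any direction illuminating a vertex lies in the open interior half-plane of both incident edges and therefore illuminates every interior point of those edges as well.
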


In general, one can prove the following lemma.
\begin{lem}
	Let $m\geq 2$. Let $P=p_1p_2\cdots p_{n}$ be a convex $n$-sided polygon, where $n\geq 2m+1$. For any integer $i$, we define $p_{i+n}=p_i$, and denote by $\alpha_i$ the radian measure of exterior angle of $P$ at the vertex $p_i$. Suppose that there exists a  sequence of integers $\left(n_k\right)_{k\in\mathbb{N}}$ such that
	\begin{enumerate}[label=(\roman*)]
		\item $0=n_0<n_1<n_2<\cdots<n_{2m}$ and $n_{i+(2m+1)k}=n_i+nk$ for all $i,k=0,1,2,\ldots$,
		\item $\beta_k+\beta_{k+1}+\cdots+\beta_{k+m-1}<\pi$ for all integer $k$, where $$\beta_k=\sum_{i=n_{k-1}+1}^{n_k}\alpha_i.$$ 
	\end{enumerate} 
	Then $I^m(P)=2m+1$.
\end{lem}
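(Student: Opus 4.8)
The plan is to reduce to Lemma \ref{convex_2m_1_polygon} by \emph{coarsening} $P$ into an auxiliary convex $(2m+1)$-gon $Q$ whose exterior angles are exactly the $\beta_k$, and then transporting the illumination of $Q$ back to $P$ by means of Lemma \ref{sub_cap_body_must_be_illumnated}.

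First I would fix a smooth convex body $K\subseteq P$ that touches every edge of $P$ in its relative interior --- concretely, the body obtained from $P$ by rounding each corner with a circular arc of small radius $\varepsilon>0$, whose boundary consists of sub-segments of the edge-lines of $P$ joined by those arcs, and which hence has a unique supporting line (hence a unique support hyperplane) at every boundary point. Since $K\subseteq P$, the vertices $p_i$ lie outside $K$, $K$ meets every edge of $P$, and $K$ --- being connected and touching edges on both sides of any diagonal --- meets every diagonal of $P$; thus $\{p_1,\dots,p_n\}$ satisfies the hypothesis of Lemma \ref{spiky_convex_condition}, so $P=\widehat K(\{p_1,\dots,p_n\})$.

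Next, for each $k$ (indices mod $2m+1$) let $\ell_k$ be the line carrying the edge $p_{n_k}p_{n_k+1}$, let $H_k^+$ be the closed half-plane bounded by $\ell_k$ that contains $P$, and put $q_k=\ell_{k-1}\cap\ell_k$. Traversing $\partial P$, the supporting line turns from $\ell_{k-1}$ to $\ell_k$ by exactly the exterior angles at $p_{n_{k-1}+1},\dots,p_{n_k}$, i.e. by $\beta_k$; since (ii) forces each $\beta_k<\pi$, each $q_k$ is an honest point, it lies outside $K$ because $\ell_k$ is tangent to $K$, and $q_1,\dots,q_{2m+1}$ are the vertices of the convex $(2m+1)$-gon $Q:=\bigcap_k H_k^+$, which contains $P$, has $\ell_k$ tangent to $K$ for every $k$, and has exterior angle $\beta_k$ at $q_k$. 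The same reasoning as in the previous paragraph shows $\{q_1,\dots,q_{2m+1}\}$ satisfies the hypothesis of Lemma \ref{spiky_convex_condition}, so $Q=\widehat K(\{q_1,\dots,q_{2m+1}\})$, and $\widehat K(\{p_i\})=P\subseteq Q=\widehat K(\{q_k\})$. The exterior angles $\beta_k$ of the $(2m+1)$-gon $Q$ satisfy precisely the hypothesis of Lemma \ref{convex_2m_1_polygon}, so $I^m(Q)=2m+1$; since $K$ is smooth, the ``unique support hyperplane'' hypothesis of Lemma \ref{sub_cap_body_must_be_illumnated} holds automatically, so applying that lemma with $V_1=\{q_k\}$ and $V_2=\{p_i\}$ gives $I^m(P)\le I^m(Q)=2m+1$. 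Combined with $I^m(P)\ge 2m+(d-1)=2m+1$ from Theorem \ref{main_lower_bound}, this yields $I^m(P)=2m+1$.

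The step I expect to cause the most trouble is the coarsening itself: one must check carefully that the blocks of edges cut out by the $n_k$ genuinely play the role of the vertices of $Q$ --- that $q_1,\dots,q_{2m+1}$ are distinct and in convex position, that they enclose $P$, and that both vertex sets satisfy the intersection condition of Lemma \ref{spiky_convex_condition} (for which it is convenient that $K$ touches all sides of $Q$ as well as all edges of $P$). Distinctness of the $q_k$ --- equivalently, that no three of $\ell_{k-1},\ell_k,\ell_{k+1}$ are concurrent --- holds in general position; in the degenerate cases one can instead inline the proof of Lemma \ref{sub_cap_body_must_be_illumnated}, rerunning the construction of Lemma \ref{convex_2m_1_polygon} directly with the (possibly repeated) points $q_k$, the auxiliary apexes $v_k=\ell_k\cap\ell_{k+m}$ (which exist because $\beta_{k+1}+\dots+\beta_{k+m}<\pi$ by (ii)), and directions $u_k$ illuminating $v_k$ with respect to $\widehat K(v_k)$ --- an argument that never needs the $q_k$ to be distinct.
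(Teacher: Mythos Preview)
Your proposal is correct. The paper itself does not supply a proof of this lemma; it merely writes ``In general, one can prove the following lemma'' immediately after Lemma~\ref{convex_2m_1_polygon}, so the intended argument is the direct block-by-block adaptation of Section~\ref{subsection_smooth}, which is precisely what you sketch as your fallback in the last paragraph (work with the edge-lines $\ell_k$, the apexes $v_k=\ell_k\cap\ell_{k+m}$, and directions $u_k$ illuminating the whole block $p_{n_k+1},\dots,p_{n_{k+m}}$). Your main route is genuinely different and more modular: rather than redo the spike bookkeeping, you coarsen $P$ to a circumscribed $(2m+1)$-gon $Q$ whose exterior angles are the $\beta_k$, read off $I^m(Q)=2m+1$ from Lemma~\ref{convex_2m_1_polygon}, and then push the illumination down to $P$ via Lemma~\ref{sub_cap_body_must_be_illumnated} using an auxiliary smooth inscribed body $K$. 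This buys you a clean reduction and reuses the existing lemmas verbatim, at the cost of setting up $K$ and checking the cap-body hypotheses; the paper's implicit direct argument avoids the auxiliary $K$ but repeats the Section~\ref{subsection_smooth} mechanics.

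One simplification: the degeneracy you worry about cannot occur, so the fallback is never needed. Each $\ell_k$ is a supporting line of the planar convex body $K$, and through any point $x\notin K$ there pass \emph{exactly two} supporting lines of $K$ (the two boundary rays of the visibility cone from $x$). Since distinct edges of a convex polygon lie on distinct lines, the $\ell_k$ are pairwise distinct, hence no three of them can be concurrent; it follows that the intersection points $q_k=\ell_{k-1}\cap\ell_k$ are pairwise distinct, and $Q$ is always an honest convex $(2m+1)$-gon. With this observation your main argument goes through without any case analysis.
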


\begin{lem}\label{illumination_number_of_K_contained_in_polygon}
	Let $K$ be a $2$-dimensional convex body. Let $P$ be a convex $n$-sided polygon containing $K$ such that each side of $P$ touches the boundary of $K$, and $I^m(P)\leq l$. Suppose that for each side $E$ of $P$, if $E\cap \partial K$ contains only one point $p$, then $K$ has a unique support line at the point $p$. Then $I^m(K)\leq l$.
\end{lem}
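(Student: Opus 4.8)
The plan is to realise $P$ as a cap body of $K$ and then $m$-fold illuminate $K$ "through" $P$. Let $V$ be the set of vertices of $P$. Since $K\subseteq P=\conv(V)$ we have $P=\conv(K\cup V)$; moreover any two distinct vertices of $P$ are joined by a segment meeting $K$ (if they are the endpoints of a common side, that side touches $\partial K$; otherwise each of the two arcs of $\partial P$ between them contains a full side of $P$, so the chord lies between two lines each touching $K$, and convexity forces the chord to meet $K$). Hence $V$ satisfies the hypothesis of Lemma \ref{spiky_convex_condition}, so $P=\widehat K(V)$ is a cap body. Fix a multiset $U$ of $I^m(P)\le l$ directions that $m$-fold illuminates $P$; I will show $U$ $m$-fold illuminates $K$, which gives $I^m(K)\le|U|\le l$. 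One cannot simply invoke Lemma \ref{sub_cap_body_must_be_illumnated} with $V_1=V$ and $V_2=\emptyset$: when a side $E$ of $P$ meets $\partial K$ in a nondegenerate segment, the endpoint of $E\cap\partial K$ nearest an endpoint $v\notin K$ of $E$ lies in $\partial K\cap\partial\widehat K(v)\cap\partial S(K,v)$ and may be a corner of $K$, so the hypothesis of that lemma can fail. I therefore re-run its case analysis, disposing of those corners by a separate tangent-cone argument.

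Fix $p\in\partial K$; it suffices to exhibit $m$ directions of $U$ illuminating $p$ with respect to $K$. Apply Lemma \ref{possible_positions_of_point_on_boundary_of_K} with $V_1=V$: either (i) $p\notin\partial S(K,v)$ for all $v\in V$, or (ii) $p\in\partial\widehat K(v)\cap\partial S(K,v)$ for some $v\in V$, or (iii) $p\in C(K,v)$ for some $v\in V$. In case (i) I claim $p\in\partial\widehat K(V)$: otherwise $p\in\Int P$, and as $p\in\partial K$ there is $w$ with $p+\varepsilon w\notin K$ yet $p+\varepsilon w\in P=K\cup\bigcup_{v\in V}S(K,v)$ for all small $\varepsilon>0$; since $V$ is finite this forces $p\in\cl(S(K,v))\setminus S(K,v)=\partial S(K,v)$ for some $v$, a contradiction. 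Then Lemma \ref{not_on_boundary_of_spike} shows that illumination of $p$ with respect to $K$ and with respect to $\widehat K(V)=P$ coincide, and $P$ is $m$-fold illuminated by $U$. In case (iii), $v$ is a vertex of $P$, hence $m$-fold illuminated by $U$ with respect to $\widehat K(V)=P$, and so with respect to $\widehat K(v)$ (which coincides with $\widehat K(V)$ near $v$); Lemma \ref{illuminate_cap}(ii) then transports this to every point of $C(K,v)$, in particular $p$.

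It remains to treat case (ii); here $v\notin K$. By Lemma \ref{boundary_of_spike_ray_not_intersect_int_K} the ray $vp$ misses $\Int K$, hence (by convexity of $K$) so does the whole line $vp$, which is therefore a support line of $K$ at $p$ through $v$. The two sides of $P$ at $v$ lie on the two support lines of $K$ through $v$, so line $vp$ is the line of one of them, say $E_i$, whence $p\in E_i\cap\partial K$. If $E_i\cap\partial K=\{p\}$, or $p$ lies in the relative interior of $E_i\cap\partial K$, then $K$ has a unique support line at $p$ — by the hypothesis of the lemma in the first instance, because $\partial K$ is flat there in the second — and, $v$ being $m$-fold illuminated with respect to $\widehat K(v)$ as in case (iii), Remark \ref{replace_smooth_condition} gives that $p$ is $m$-fold illuminated with respect to $K$. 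Otherwise $E_i\cap\partial K=[a,b]$ is nondegenerate and $p$ is an endpoint; since the endpoint of $[a,b]$ farther from $v$ does not lie on $\partial S(K,v)$ (near it $\widehat K(v)$ locally coincides with $K$), $p$ is the nearer endpoint $a$, with $v,a,b$ in this order on the support line.

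The crux is the remaining sub-case: $p=a$ is a corner of $K$. Here $a-v$ is a positive multiple of $b-a$, and $b-a$ — pointing along the flat arc $[a,b]\subseteq\partial K$ out of $a$ — lies in the tangent cone $T$ of $K$ at $p$. As $T$ is a convex cone, for every $k\in K$ we get $k-v=(k-p)+(p-v)\in T+T=T$; therefore the tangent cone of $\widehat K(v)$ at $v$, namely the cone generated by $K-v$, is contained in $T$, and so is its interior in $\Int(T)$. Hence any direction illuminating $v$ with respect to $\widehat K(v)$ illuminates $p$ with respect to $K$, and since $v$ is $m$-fold illuminated by $U$ with respect to $\widehat K(v)$ (as in case (iii)), so is $p$ with respect to $K$. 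This exhausts all positions of $p$, so $U$ $m$-fold illuminates $K$. The one genuine obstacle is precisely this last sub-case — noticing that Lemma \ref{sub_cap_body_must_be_illumnated} is not directly applicable, and that the tangent-cone inclusion above (which uses the lemma's hypothesis on single-point contacts only to rule out corners on single-touch sides) rescues the corner points.
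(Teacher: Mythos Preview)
Your proof is correct and follows essentially the same approach as the paper: realize $P=\widehat K(V)$ as a cap body of $K$, push illumination of the vertices down to $C(K,v)$ via Lemma~\ref{illuminate_cap}, handle smooth contact points on the sides via Remark~\ref{replace_smooth_condition}, and treat the possibly non-smooth endpoints of $E_i\cap\partial K$ by a separate argument. The only substantive difference is that last step: the paper disposes of an endpoint $s_i$ by showing the line through $s_i$ in direction $u$ separates $t_i$ from a point $q\in\Int K$ and hence meets $\Int K$, whereas you obtain the same conclusion via the inclusion of tangent cones $\mathrm{cone}(K-v)\subseteq T_pK$; these are equivalent geometric observations.
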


\begin{proof}
	Assume that $P=p_1p_2\cdots p_{n}$ and let $p_{n+1}=p_1$. Since each side of $P$ touches the boundary of $K$, we have that $P=\widehat{K}(\{p_1,p_2,\ldots,p_{n}\})$. Denote by $[s,t]$ the line segment with endpoints $s$ and $t$. For $i=1,2,\ldots,n$, let $[s_i,t_i]=[p_i,p_{i+1}]\cap\partial K$, where $s_i$ lies between $p_i$ and $t_i$. It is clear that
	$$\partial K=\bigcup_{i=1}^{n}C(K,p_i)\cup \bigcup_{i=1}^{n}[s_i,t_i].$$
	Since $I^m(P)\leq l$, there exists a multiset $U$ of directions in $\mathbb{S}^1$ such that $P$ is $m$-fold illuminated by $U$, and $|U|\leq l$. Then for $i=1,2,\ldots,n$, the vertex $p_i$ is $m$-fold illuminated by $U$ with respect to $P$. By Lemma \ref{illuminate_cap}, we have that each point in $C(K,p_i)$ is $m$-fold illuminated by $U$ with respect to $K$. 
	
	Now consider $p\in[s_i,t_i]$, for some $i=1,2,\ldots,n$.  We will show that $p$ is $m$-fold illuminated by $U$ with respect to $K$.
	\begin{itemize}
		\item[] \textbf{Case 1:} $s_i=t_i$. Then, from the given condition, $K$ has a unique support line at the point $p$. Note that $p\in\overline{C}(K,p_i)\setminus C(K,p_i)$ and $p_i$ is $m$-fold illuminated by $U$ with respect to $P$. From Remark \ref{replace_smooth_condition}, we obtain that $p$ is $m$-fold illuminated by $U$ with respect to $K$.
		\item[] \textbf{Case 2:} $s_i\neq t_i$. If $p\in(s_i,t_i)$, then $K$ has a unique support line at the point $p$. By the same argument used in Case 1, one obtains that $p$ is $m$-fold illuminated by $U$ with respect to $K$. Now suppose that $p=s_i$. For any direction $u$ that illuminates $p_i$ with respect to $P$. From the proof of Lemma \ref{illuminate_cap}, there exists $q\in \Int(K)$ such that $u=\lambda\cdot(q-p_i)$ for some $\lambda>0$.  Denote by $L$ the line passing through $s_i$ and parallel to $u$. It is clear that $t_i$ and $q$ lie on the opposite sides of the line $L$. Note that $t_i\in K$ and $q\in\Int(K)$. It follows that $L$ intersect $\Int(K)$. This implies that $u$ illuminates $s_i$ with respect to $K$. Since $p_i$ is $m$-fold illuminated by $U$ with respect to $P$, we have that $s_i$ is $m$-fold illuminated by $U$ with respect to $K$. If $p=t_i$, then similar to the case $p=s_i$, we may consider the illumination at the point $p_{i+1}$, and obtain that  $t_i$ is $m$-fold illuminated by $U$ with respect to $K$.
	\end{itemize}
	In conclusion, we have that each point in $\partial K$ is $m$-fold illuminated by $U$ with respect to $K$, and hence $I^m(K)\leq |U|\leq l$. 
\end{proof}

\begin{rem}
	In Lemma \ref{illumination_number_of_K_contained_in_polygon}, if $l=2m+1$, then we have that $I^m(K)\leq 2m+1$. On the other hand, by Theorem \ref{main_lower_bound}, we know that $I^m(K)\geq 2m+1$. Therefore, $I^m(K)=2m+1$.
\end{rem}
Now we consider the $m$-fold illumination numbers of regular convex polygons.

\begin{lem}\label{illumination_number_of_regular_polygon}
	Let $K$ be a regular convex $n$-sided polygon, where $n\geq 3$. Then
	$$I^m(K)=\left\lceil\frac{mn}{\left\lfloor\frac{n-1}{2}\right\rfloor}\right\rceil.$$
\end{lem}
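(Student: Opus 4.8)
The plan is to write $L=\left\lfloor\frac{n-1}{2}\right\rfloor$ and $N=\left\lceil\frac{mn}{L}\right\rceil$, reduce the statement to a purely combinatorial covering problem on the $n$ vertices of $K$, and then prove the two matching bounds $I^m(K)\ge N$ and $I^m(K)\le N$.

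First I would set up the reduction. Label the vertices $p_0,\dots,p_{n-1}$ cyclically and let $\nu_0,\dots,\nu_{n-1}\in\mathbb S^1$ be the outward unit edge normals, with $\nu_i$ normal to $[p_i,p_{i+1}]$; these are equally spaced with angular gap $2\pi/n$, and the normal cone of $K$ at $p_i$ is spanned by $\nu_{i-1}$ and $\nu_i$. A short computation with the support function shows that $u\in\mathbb S^1$ illuminates, with respect to $K$, a point in the relative interior of $[p_i,p_{i+1}]$ exactly when $\langle u,\nu_i\rangle<0$, and illuminates the vertex $p_i$ exactly when $\langle u,\nu_{i-1}\rangle<0$ and $\langle u,\nu_i\rangle<0$. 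Two consequences follow. (a) The set of directions illuminating $p_i$ is contained in the set of directions illuminating the relative interior of each of the two edges meeting at $p_i$; hence a multiset $U$ of directions $m$-fold illuminates $\partial K$ if and only if it $m$-fold illuminates all $n$ vertices. (b) The vertices illuminated by a fixed $u$ are precisely those $p_i$ with both $\nu_{i-1}$ and $\nu_i$ lying in the open half-plane $\{x:\langle u,x\rangle<0\}$, whose trace on $\mathbb S^1$ is an open arc of length exactly $\pi$; since the $\nu_j$ are equally spaced, such an arc contains at most $\lceil n/2\rceil$ of them when $n$ is odd and at most $n/2$ when $n$ is even, and these bounds are attained. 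Unwinding this in the two parities shows that a single direction always illuminates a block of at most $L$ consecutive vertices, and that conversely every block of exactly $L$ consecutive vertices is the set of vertices illuminated by some suitable $u$.

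Given (a) and (b), the lower bound is a double count: an $m$-fold illuminating multiset $U$ must $m$-fold illuminate the $n$ vertices, so $\sum_{u\in U}\bigl(\#\text{vertices illuminated by }u\bigr)\ge mn$; since every summand is at most $L$, we get $|U|\ge mn/L$, hence $|U|\ge N$. For the upper bound I would exhibit $N$ directions realizing the blocks $A_j=\{p_{jL},p_{jL+1},\dots,p_{jL+L-1}\}$ (indices modulo $n$) for $j=0,1,\dots,N-1$. As integer sets, before reduction modulo $n$, these blocks partition $\{0,1,\dots,NL-1\}$, so the number of times a vertex $p_r$ is illuminated equals the number of integers in $\{0,\dots,NL-1\}$ congruent to $r$ modulo $n$. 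Writing $NL=mn+s$ with $0\le s<L$ (this inequality being exactly what defines the ceiling), that number is $m+1$ for $0\le r<s$ and $m$ for $s\le r\le n-1$; so every vertex, and therefore by (a) all of $\partial K$, is $m$-fold illuminated, giving $I^m(K)\le N$.

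The part I expect to be delicate is the sharp count in (b): pinning down that the exact maximum number of vertices one direction can illuminate is $\left\lfloor\frac{n-1}{2}\right\rfloor$, and not $\left\lceil\frac{n-1}{2}\right\rceil$, forces one to separate the parity of $n$ and to be careful that a normal lying on the bounding line of the (open) half-plane must be excluded; one must also verify that a prescribed block of exactly $L$ consecutive vertices, with its two neighbouring vertices suppressed, is genuinely realizable by a direction. Everything else is routine bookkeeping.
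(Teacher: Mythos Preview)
Your proposal is correct and follows essentially the same route as the paper's proof: both obtain the lower bound by the same double count (each of the $n$ vertices needs multiplicity $m$, each direction illuminates at most $L=\lfloor\frac{n-1}{2}\rfloor$ vertices), and both realize the upper bound by the same explicit construction, taking $N$ directions whose illuminated vertex-blocks are the length-$L$ arithmetic progressions starting at $0,L,2L,\dots,(N-1)L$ modulo $n$. Your write-up is more detailed---you spell out the reduction from $\partial K$ to the vertex set, the parity analysis behind the bound $L$, and the verification that $NL=mn+s$ with $0\le s<L$ forces each residue class to be hit at least $m$ times---whereas the paper simply asserts the key combinatorial facts (``the direction $u$ can simultaneously illuminate at most $\lfloor\frac{n-1}{2}\rfloor$ vertices'' and the existence of a $u_i$ illuminating a prescribed block) and wraps up the coverage count in one line.
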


\begin{proof}
	First we will show that $$I^m(K)\geq \frac{mn}{\left\lfloor\frac{n-1}{2}\right\rfloor}.$$
	Suppose that $K$ is $m$-fold illuminated by a multiset $U$ of directions in $\mathbb{S}^1$. Denote by $I(K,U)$ the multiset of all pairs of $(p,u)$ such that $p$ is a vertex of $K$, $u\in U$, and $u$ illuminates $p$ with respect to $K$. Since $K$ is $m$-fold illuminated by $U$, each vertex of $K$ must be illuminated by at least $m$ directions from $U$. Hence, we have
	$$|I(K,U)|\geq mn,$$
	where $|\cdot|$ refers to the sum of multiplicities of all elements in the corresponding set. On the other hand, since $K$ is a regular convex $n$-sided polygon, we have that for any $u\in U$, the direction $u$ can simultaneously illuminate at most $\left\lfloor\frac{n-1}{2}\right\rfloor$ vertices of $K$, and hence
	$$|I(K,U)|\leq |U|\cdot \left\lfloor\frac{n-1}{2}\right\rfloor.$$
	It follows that
	$$|U|\geq \frac{mn}{\left\lfloor\frac{n-1}{2}\right\rfloor}.$$
	
	Now we will show that there exists a multiset  $U$ of directions in $\mathbb{S}^1$ with $$|U|=\left\lceil\frac{mn}{\left\lfloor\frac{n-1}{2}\right\rfloor}\right\rceil$$ such that $K$ is $m$-fold illuminated by $U$. Suppose that $K=p_1p_2\cdots p_n$. For any integer $i$, we define $p_{i+n}=p_i$, and let $u_i$ be a direction that illuminates $$p_i,p_{i+1},\ldots,p_{i+\left\lfloor\frac{n-1}{2}\right\rfloor-1}$$
    with respect to $K$. Choose
    $$U=\left\{u_i\mid i=k\cdot\left\lfloor\frac{n-1}{2}\right\rfloor ,~k=0,1,\ldots,\left\lceil\frac{mn}{\left\lfloor\frac{n-1}{2}\right\rfloor}\right\rceil-1\right\}.$$
    Since 
    $$\left\lfloor\frac{n-1}{2}\right\rfloor\cdot\left\lceil\frac{mn}{\left\lfloor\frac{n-1}{2}\right\rfloor}\right\rceil\geq mn,$$
    one obtains that each vertex $p_i$ of $K$ is $m$-fold illuminated by $U$ with respect to $K$. This implies that the polygon $K$ is $m$-fold illuminated by $U$.
\end{proof}

If $n$ is odd and $n\geq 2m+1$, then
$$\frac{mn}{\left\lfloor\frac{n-1}{2}\right\rfloor}=\frac{2mn}{n-1},$$
and
$$2m<\frac{2mn}{n-1}\leq 2m+1.$$
If $n$ is even and $n\geq 4m+2$, then
$$\frac{mn}{\left\lfloor\frac{n-1}{2}\right\rfloor}=\frac{2mn}{n-2},$$
and
$$2m<\frac{2mn}{n-2}\leq 2m+1.$$
Using the result in Lemma \ref{illumination_number_of_regular_polygon}, we complete the proof of Theorem \ref{main_regular_polygon}.

\subsection{An upper bound of $m$-fold illumination numbers of $\mathbb{B}^d$} \label{subsection_upper_bound_ball}
Now we consider the $m$-fold illumination numbers of $d$-dimensional unit balls $\mathbb{B}^d$. Denote by $\langle\cdot,\cdot\rangle_d$ the dot product of Euclidean space $\mathbb{E}^d$. Let $u\in\mathbb{S}^{d-1}$ and $p\in \partial \mathbb{B}^d=\mathbb{S}^{d-1}$. It is easy to see that the direction $u$ illuminates the point $p$ with respect to $\mathbb{B}^d$ if and only if $\langle u,p\rangle_d<0$.
\begin{lem}\label{three_dim_ball_upper_bound}
	For any positive integers $m$, we have
	$$ I^m(\mathbb{B}^3)\leq 2m+1+\left\lceil\frac{m}{2}\right\rceil.$$
\end{lem}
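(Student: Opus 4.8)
The plan is to construct an explicit multiset $U$ of directions in $\mathbb{S}^2$ of total size $2m+1+\lceil m/2\rceil$ that $m$-fold illuminates $\mathbb{B}^3$, using the characterization that $u$ illuminates $p\in\mathbb{S}^2$ iff $\langle u,p\rangle_3<0$. First I would set up a product-type construction: view $\mathbb{S}^2$ via a "north–south" axis, say the $z$-axis, and split the sphere into the two polar caps and an equatorial band. For the equatorial band one uses directions lying in (or near) the horizontal plane: taking $2m+1$ equally spaced horizontal directions $w_1,\dots,w_{2m+1}$ (the vertices of a regular $(2m+1)$-gon on the equator), each point $p$ with small $z$-coordinate satisfies $\langle w_j,p\rangle_3<0$ for at least $m$ of the $w_j$'s — this is exactly the $2$-dimensional smooth-body count from Theorem \ref{main_smooth_two_dim} applied fiberwise, and by a compactness/continuity argument it persists for $|z|\le\varepsilon$ for some $\varepsilon>0$. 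That handles everything except the two caps $\{p: z>\varepsilon\}$ and $\{p: z<-\varepsilon\}$.

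Next I would cover the two polar caps. The key economy is that a single direction close to $-e_3$ (i.e. pointing "down") illuminates the \emph{entire} open upper cap $\{p: \langle e_3,p\rangle_3>0\}$ at once, and likewise one direction near $e_3$ handles the lower cap; but to get $m$-fold coverage of each cap naively costs $2m$ extra directions, which is too many. Instead I would tilt: observe that a direction of the form $u = (\text{small horizontal part}) - e_3$ illuminates the whole upper hemisphere \emph{and also} a thin sliver of the lower hemisphere near the equator on one side, and symmetrically $u' = (\text{small horizontal part}) + e_3$ illuminates the whole lower hemisphere plus a sliver of the upper one. By carefully choosing $\lceil m/2\rceil$ "up-ish" directions and $\lceil m/2\rceil$ "down-ish" directions — or, more likely, by reusing the band directions $w_j$ tilted slightly up or down so that they do double duty — one arranges that each cap point gets $m$ illuminations while only $\lceil m/2\rceil$ genuinely new directions are spent beyond the $2m+1$ equatorial ones. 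Concretely: replace the $2m+1$ horizontal $w_j$ by $2m+1$ directions tilted alternately up and down by a tiny angle; the up-tilted ones ($\lceil$half$\rceil$ of them, roughly $m$) contribute toward illuminating the lower cap, the down-tilted ones toward the upper cap, and one adds only $\lceil m/2\rceil$ further auxiliary directions to top up whichever cap falls short of $m$.

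The main obstacle I anticipate is the bookkeeping at the boundary between "band" and "cap" regions: one must check that for the transitional points (those with $z\approx\pm\varepsilon$) the counts from the tilted band directions and the auxiliary cap directions add up to at least $m$ \emph{simultaneously}, since a point near the upper edge of the band loses some band illuminations (because the directions were tilted the "wrong" way for it) and must make them up from the down-ish auxiliary directions. Getting the tilt angle small enough relative to $\varepsilon$, and the parity/ceiling arithmetic to land exactly on $2m+1+\lceil m/2\rceil$ rather than one more, is the delicate part; I would handle it by treating $m$ even and $m$ odd separately and choosing the split of the auxiliary directions accordingly. A secondary point worth verifying explicitly is that the strict inequality $\langle u,p\rangle_3<0$ is preserved under the small perturbations — this is where the compactness of $\mathbb{S}^2$ and the openness of the illumination condition are used, so I would state that once, up front, and then argue everything at the level of "for all sufficiently small tilt."
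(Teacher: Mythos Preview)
Your plan matches the paper's construction in outline: $2m+1$ nearly-equatorial directions tilted alternately up and down, together with $\lceil m/2\rceil$ polar auxiliaries. The one substantive device you have not named, and which is what makes the auxiliary count land on $\lceil m/2\rceil$ rather than roughly $m$, is that the tilts must be \emph{asymmetric in magnitude}. If you tilt symmetrically by $\pm\varepsilon$, then both an upper-intermediate zone (where the angularly-close up-tilted $u_i$ begin to fail because $\varepsilon z>0$) and a lower-intermediate zone (where the angularly-close down-tilted $u_i$ begin to fail because $-\varepsilon z>0$) fall short of $m$ illuminations, each by about $m/2$; topping up both caps then costs $\approx m$ extra directions, not $\lceil m/2\rceil$.

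The paper instead tilts the $m$ odd-indexed directions up by $\varepsilon$ and the $m+1$ even-indexed directions down by only $\varepsilon^2$. Because the down-tilt is second order, the angularly-close down-tilted $u_i$ continue to work all the way down to $z=-\sqrt{1-\varepsilon^2}$, so the entire band $z\in[-\sqrt{1-\varepsilon^2},\tfrac12]$ is covered by the $m$ angularly-close $u_i$ irrespective of their tilt sign. Below $-\sqrt{1-\varepsilon^2}$ the point is so near the south pole that \emph{all} $m$ up-tilted $u_i$ illuminate it (the $\varepsilon z$ term dominates), so the lower cap needs no auxiliaries at all. Only the region $z>\tfrac12$ is genuinely short: there at least $\lfloor m/2\rfloor$ of the angularly-close $u_i$ are even-indexed (down-tilted), and $\lceil m/2\rceil$ copies of the single direction $(0,0,-1)$ complete the count. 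With this asymmetric tilt the argument is uniform in $m$; no separate even/odd cases are required.
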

\begin{proof}
Suppose that $0<\varepsilon<\cos\frac{m\pi}{2m+1}\leq\frac12$.
Let $$w_1=w_2=\cdots=w_{\lceil\frac{m}{2}\rceil}=(0,0,-1).$$
 For any integer $i$, let $$u_i=\left(-\sqrt{1-\varepsilon_i^2}\cos\frac{2i\pi}{2m+1},-\sqrt{1-\varepsilon_i^2}\sin\frac{2i\pi}{2m+1}, \varepsilon_i\right),$$
 where for $j=0,1,2,\ldots,2m$, we define
 $$\varepsilon_j=
 \begin{cases}
 \varepsilon, & j~\mbox{is odd}\\
 -\varepsilon^2, & j~\mbox{is even},
 \end{cases} 
$$
 and $\varepsilon_{j+2m+1}=\varepsilon_j$, for all integer $j$. Then $u_{j}=u_i$ if and only if $j-i$ is divided by $2m+1$.

For any $p=(\sqrt{1-z^2}\cos\theta,\sqrt{1-z^2}\sin\theta,z)\in\partial\mathbb{B}^3$, where $\theta\in[0,2\pi)$ and $z\in[-1,1]$. We consider the following three cases. 

\textbf{Case} $z\in[-1,-\sqrt{1-\varepsilon^2})$.  Then
$$\frac{\sqrt{1-z^2}}{-z}<\frac{\varepsilon}{\sqrt{1-\varepsilon^2}}.$$
For $i=1,3\ldots,2m-1$,
\begin{align*}
	\langle u_i,p\rangle &=-\sqrt{1-\varepsilon^2}\sqrt{1-z^2}\cos\left(\frac{2i\pi}{2m+1}-\theta\right)+\varepsilon z\\
	&\leq \sqrt{1-\varepsilon^2}\sqrt{1-z^2}+\varepsilon z <0.
\end{align*}

 \textbf{Case} $z\in\left[-\sqrt{1-\varepsilon^2},\frac12\right]$. Suppose that $i$ is an integer such that $$\frac{-m+(2m+1)\pi^{-1}\theta}{2}\leq i\leq \frac{m+(2m+1)\pi^{-1}\theta}{2}.$$
 Then
 $$-\frac{m\pi}{2m+1}\leq\frac{2i\pi}{2m+1}-\theta\leq \frac{m\pi}{2m+1}.$$
 Note that there are at least $m$ numbers of integers $i$ satisfy the above condition.
  Consider
 \begin{align*}
 	\langle u_i,p\rangle &=-\sqrt{1-\varepsilon_i^2}\sqrt{1-z^2}\cos\left(\frac{2i\pi}{2m+1}-\theta\right)+\varepsilon_i z\\
 	&\leq -\sqrt{1-\varepsilon_i^2}\sqrt{1-z^2}\cos\left(\frac{m\pi}{2m+1}\right)+\varepsilon_i z\\
 	&< -\varepsilon\sqrt{1-\varepsilon_i^2}\sqrt{1-z^2}+\varepsilon_i z\\
 \end{align*}
 If $\varepsilon_i=\varepsilon$ and $z\in\left[-\sqrt{1-\varepsilon^2},0\right]$, then we have $-\varepsilon\sqrt{1-\varepsilon_i^2}\sqrt{1-z^2}+\varepsilon_i z<0$.
 If $\varepsilon_i=\varepsilon$ and $z\in\left(0,\frac12\right]$, then we have
 $$
 	\frac{z}{\sqrt{1-z^2}} \leq \frac{1}{\sqrt{3}}<\frac{\sqrt{3}}{2}
 			< \sqrt{1-\varepsilon^2},
 $$
 and hence $-\varepsilon\sqrt{1-\varepsilon^2}\sqrt{1-z^2}+\varepsilon z<0$. If $\varepsilon_i=-\varepsilon^2$ and $z\in\left[0,\frac12\right]$, then we easily get $-\varepsilon\sqrt{1-\varepsilon_i^2}\sqrt{1-z^2}+\varepsilon_i z<0$.
 If $\varepsilon_i=-\varepsilon^2$ and $z\in\left[-\sqrt{1-\varepsilon^2},0\right)$, then
 $$
 \frac{z}{\sqrt{1-z^2}} \geq \frac{-\sqrt{1-\varepsilon^2}}{\varepsilon}>\frac{-\sqrt{1-\varepsilon^4}}{\varepsilon},
 $$
 which implies that $-\varepsilon\sqrt{1-\varepsilon^4}\sqrt{1-z^2}-\varepsilon^2 z<0$.

\textbf{Case} $z\in\left(\frac12,1\right]$. Suppose that $i$ is an integer such that
$$\frac{-m+(2m+1)\pi^{-1}\theta}{2}\leq i\leq \frac{m+(2m+1)\pi^{-1}\theta}{2}.$$
and the (least non-negative) remainder of the division of $i$ by $2m+1$ is even. Note that there are at least $\left\lfloor\frac{m}{2}\right\rfloor$ numbers of integers $i$ satisfy the above conditions. Observe that
 \begin{align*}
	\langle u_i,p\rangle &=-\sqrt{1-\varepsilon^4}\sqrt{1-z^2}\cos\left(\frac{2i\pi}{2m+1}-\theta\right)-\varepsilon^2 z\\
	&\leq -\sqrt{1-\varepsilon^4}\sqrt{1-z^2}\cos\left(\frac{m\pi}{2m+1}\right)-\varepsilon^2 z\\
	&< -\varepsilon\sqrt{1-\varepsilon^4}\sqrt{1-z^2}-\varepsilon^2z<0.\\
\end{align*}
Furthermore, for $j=1,2,\ldots,\left\lceil\frac{m}{2}\right\rceil$, we have $\langle w_j,p\rangle =-z<0$.
	
From the above three cases, one can conclude that $\mathbb{B}^3$ is $m$-fold illuminated by the multiset of directions  $$U=\left\{u_0,u_1,\ldots,u_{2m},w_1,\ldots,w_{\lceil\frac{m}{2}\rceil}\right\}.$$
This implies that 
	$$ I^m(\mathbb{B}^3)\leq 2m+1+\left\lceil\frac{m}{2}\right\rceil.$$
\end{proof}

\textbf{Proof of Theorem \ref{main_ball_upper_bound}:}	We will use an induction on $d$. By Lemma \ref{three_dim_ball_upper_bound}, the result is true for $d=3$. Now we suppose that the result is true for $\mathbb{B}^d$. Recall that
$$I^m(\mathbb{B}^d)=C^m(\mathbb{B}^d),$$
where $C^m(\mathbb{B}^d)$ is the $m$-fold covering number of $\mathbb{B}^d$. Then
$$C^m(\mathbb{B}^d)\leq (d-1)m+1+\left\lceil\frac{m}{2}\right\rceil.$$
Suppose that $v_1,\ldots,v_n\in\mathbb{E}^d$ such that $\mathbb{B}^d$ is $m$-fold covered by $$\Int(\mathbb{B}^d)+v_1,\ldots,\Int(\mathbb{B}^d)+v_n.$$
and $n\leq (d-1)m+1+\left\lceil\frac{m}{2}\right\rceil$. Assume that
$$v_i=(v_{i1},\ldots,v_{id}),$$
for $i=1,2,\ldots,n$.
 Let
$$B=\{(x_1,\ldots,x_d,0)\in\mathbb{E}^{d+1}\mid x_1^2+\cdots+x_d^2\leq1\},$$
and
$$u_i=(v_{i1},\ldots,v_{id},0),$$
for $i=1,2,\ldots,n$. Then $B$ is $m$-fold covered by $$\relint(B)+u_1,\ldots,\relint(B)+u_n,$$
where $\relint(\cdot)$ refers to the relative interior of the corresponding set.
Let $H$ be the hyperplane in $\mathbb{E}^{d+1}$ passing through the origin and orthogonal to $(0,\ldots,0,1)$. Denote by $\rho$ the stereographic projection from $\mathbb{S}^d$ to $H$ with center of projection $(0,\ldots,0,1)$. Let 
$$\mathbb{S}^d_{-}=\{(x_1,\ldots,x_{d+1})\mid (x_1,\ldots,x_{d+1})\in\mathbb{S}^d~\mbox{and}~x_{d+1}\leq 0\}.$$
Clearly, $\rho^{-1}(B)=\mathbb{S}^d_{-}$. For $i=1,\ldots,n$, we denote by $c_i$ the center of the spherical  circle $\rho^{-1}(\relint(B)+u_i)$ on $\mathbb{S}^d$. Note that for each $i=1,\ldots,n$, $\rho^{-1}(\relint(B)+u_i)$ must be contained in the open hemisphere on $\mathbb{S}^d$ with center at $c_i$. Moreover, $\mathbb{S}^d_{-}$ is $m$-fold covered by $\rho^{-1}(\relint(B)+u_1),\ldots,\rho^{-1}(\relint(B)+u_n)$. It follows that $\mathbb{S}^d_{-}$ is $m$-fold illuminated by the multiset of directions $\{-c_1,\ldots,-c_n\}$,
with respect to $\mathbb{B}^{d+1}$. Let $w_1=\cdots=w_m=(0,\ldots,0,-1)$. Then $\mathbb{B}^{d+1}$ is $m$-fold illuminated by
$\{-c_1,\ldots,-c_n,w_1,\ldots,w_m\}$.
It follows that $$I^m(\mathbb{B}^{d+1})\leq n+m\leq dm+1+\left\lceil\frac{m}{2}\right\rceil.$$ This completes the proof of Theorem \ref{main_ball_upper_bound}.

\begin{rem}
	In fact, we have proved that $$I^m(\mathbb{B}^{d+1})\leq m+I^m(\mathbb{B}^d).$$
\end{rem}

\subsection{$m$-fold illumination numbers of some cap bodies of $\mathbb{B}^d$ in small dimensions}\label{subsection_cap_body_of_ball}
Recently, Ivanov\cite{Ivanov} and Bezdek\cite{Bezdek} studied the ($1$-fold) illumination numbers of cap bodies and spiky balls, respectively. In this subsection, we will study the $m$-fold illumination numbers of some cap bodies of $\mathbb{B}^d$, when $d=2,3$.

 In $\mathbb{E}^2$, it is known that(\cite{Boltyanskii},\cite{Gohberg},\cite{Hadwiger} and \cite{Levi})
$$
I(K)=
\begin{cases}
	4, & K~\mbox{is a parallelogram},\\
	3, & ~\mbox{others}.
\end{cases}
$$
Now we suppose that $K$ is a cap body of $\mathbb{B}^2$. If $K$ is a parallelogram, then  $I^m(K)=4m$. If $K$ is not a parallelogram, then 
$$I^m(K)\leq mI(K)=3m.$$
This upper bound is sharp, because $I^m(K)=3m$, provided $K$ is a triangle. On the other hand, from Lemma \ref{sub_cap_body_must_be_illumnated} and Theorem \ref{main_smooth_two_dim}, one gets $$I^m(K)\geq I^m(\mathbb{B}^2)=2m+1.$$
By Lemma \ref{sub_cap_body_must_be_illumnated} and Lemma \ref{convex_2m_1_polygon}, we have
\begin{lem}\label{contained_in_polygon}
	Let $P$ be a convex $(2m+1)$-sided polygon circumscribed about $\mathbb{B}^2$, i.e., each side of $P$ touches the boundary of  $\mathbb{B}^2$. Suppose that $P$ satisfies the condition in Lemma \ref{convex_2m_1_polygon}. If $K$ is a cap body of $\mathbb{B}^2$ such that $K\subset P$, then
	$$I^m(K)=2m+1.$$
\end{lem}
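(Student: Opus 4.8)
The plan is to read this statement off Lemma~\ref{sub_cap_body_must_be_illumnated} (applied with $\mathbb{B}^2$ in the role of the base body) together with Lemma~\ref{convex_2m_1_polygon}, and then close with the lower bound of Theorem~\ref{main_lower_bound}. Concretely, since $K\subset P$ and both $P$ and $K$ are cap bodies of the smooth body $\mathbb{B}^2$, any multiset of directions that $m$-fold illuminates $P$ also $m$-fold illuminates $K$; taking an optimal such multiset for $P$, which has size $2m+1$ by Lemma~\ref{convex_2m_1_polygon}, yields $I^m(K)\le 2m+1$, and the reverse inequality is immediate.

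The first step is to put $P$ into the cap-body framework. Let $V_1$ be the vertex set of $P$. Because each side of $P$ is tangent to $\mathbb{B}^2$, the ball lies on the inner side of every such tangent line, so $\mathbb{B}^2\subset P$, and hence $P=\conv(V_1)\subset\conv(\mathbb{B}^2\cup V_1)\subset P$, i.e.\ $P=\conv(\mathbb{B}^2\cup V_1)$. I would then verify that $V_1$ satisfies the condition of Lemma~\ref{spiky_convex_condition}: for distinct vertices $p_i,p_j$, if they are adjacent then $\overline{p_ip_j}$ is a side of $P$ and already touches $\mathbb{B}^2$; if not adjacent, let $\ell$ be the line through $p_i,p_j$, so $P\cap\ell=\overline{p_ip_j}$ by convexity, while (using that $\mathbb{B}^2$ is smooth and strictly convex, so it touches each side at an interior point, not at a vertex) the touch point on the side $\overline{p_ip_{i+1}}$ and the touch point on the side $\overline{p_jp_{j+1}}$ lie strictly on the two opposite open sides of $\ell$, because $p_{i+1}$ and $p_{j+1}$ lie in the two different arcs into which $p_i,p_j$ split the vertex cycle; since $\mathbb{B}^2$ is connected it meets $\ell$, and $\mathbb{B}^2\cap\ell\subset P\cap\ell=\overline{p_ip_j}$. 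Thus $\widehat{\mathbb{B}^2}(V_1)=\conv(\mathbb{B}^2\cup V_1)=P$ by Lemma~\ref{spiky_convex_condition}. Writing $K=\widehat{\mathbb{B}^2}(V_2)$ for the given cap body, its generating set $V_2$ satisfies the same condition by Definition~\ref{def_cap_body}.

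The second step is to feed $(V_1,V_2)$ into Lemma~\ref{sub_cap_body_must_be_illumnated}. Its support-hyperplane hypothesis is automatic: $\partial\mathbb{B}^2$ is smooth, so $\mathbb{B}^2$ has a unique support line at every boundary point, in particular at every point of $\bigcup_{v\in V_1}\big(\partial\mathbb{B}^2\cap\partial\widehat{\mathbb{B}^2}(v)\cap\partial S(\mathbb{B}^2,v)\big)$; and $\widehat{\mathbb{B}^2}(V_2)=K\subset P=\widehat{\mathbb{B}^2}(V_1)$ is the given inclusion. Since $P$ satisfies the angle condition of Lemma~\ref{convex_2m_1_polygon}, that lemma gives $I^m(P)=2m+1$, so there is a multiset $U$ of $2m+1$ directions in $\mathbb{S}^1$ that $m$-fold illuminates $P=\widehat{\mathbb{B}^2}(V_1)$. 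Lemma~\ref{sub_cap_body_must_be_illumnated} then shows $U$ also $m$-fold illuminates $K=\widehat{\mathbb{B}^2}(V_2)$, so $I^m(K)\le 2m+1$. Conversely $K$ is a $2$-dimensional convex body (a cap body is convex and $K\supset\mathbb{B}^2$), so $I^m(K)\ge 2m+(2-1)=2m+1$ by Theorem~\ref{main_lower_bound}. Combining gives $I^m(K)=2m+1$.

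I expect the only delicate point to be the structural claim that $\widehat{\mathbb{B}^2}(V_1)=P$ and that $V_1$ satisfies the hypothesis of Lemma~\ref{spiky_convex_condition} — that is, that a polygon circumscribed about $\mathbb{B}^2$ genuinely is a cap body of $\mathbb{B}^2$ — since without it Lemma~\ref{sub_cap_body_must_be_illumnated} does not literally apply; everything after that is a direct chain of citations. Two minor caveats: Lemma~\ref{sub_cap_body_must_be_illumnated} is stated for finite generating sets, so if $K$ is generated by a countably infinite set one should note that the proof of that lemma carries over verbatim (as in the remark after Lemma~\ref{spiky_convex_condition}); and for $m=1$ one replaces the appeal to Lemma~\ref{convex_2m_1_polygon} by the classical fact $I(P)=3$ for a triangle.
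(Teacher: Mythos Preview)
Your proof is correct and follows exactly the route the paper indicates: the paper proves this lemma by the one-line citation ``By Lemma~\ref{sub_cap_body_must_be_illumnated} and Lemma~\ref{convex_2m_1_polygon}'', and you have simply filled in the verification that $P=\widehat{\mathbb{B}^2}(V_1)$ is a cap body (so that Lemma~\ref{sub_cap_body_must_be_illumnated} applies with base body $\mathbb{B}^2$) and supplied the lower bound. The only cosmetic difference is that for the lower bound the paper uses $I^m(K)\ge I^m(\mathbb{B}^2)=2m+1$ via Lemma~\ref{sub_cap_body_must_be_illumnated} (with $V_2=\emptyset$) and Theorem~\ref{main_smooth_two_dim}, whereas you invoke Theorem~\ref{main_lower_bound} directly; both are fine.
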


\begin{lem}
	 For any $v\in\mathbb{E}^2\setminus\mathbb{B}^2$. We have
	$$I^m\left(\widehat{\mathbb{B}^2}(v)\right)=2m+1.$$
\end{lem}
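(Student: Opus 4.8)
The plan is to establish the two inequalities $I^m(\widehat{\mathbb{B}^2}(v))\ge 2m+1$ and $I^m(\widehat{\mathbb{B}^2}(v))\le 2m+1$ separately, with essentially all the work going into the second. For the first, note that $\widehat{\mathbb{B}^2}(v)$ is a cap body of $\mathbb{B}^2$ (Definition~\ref{def_cap_body}, the condition on pairs of distinct points being vacuous since $V=\{v\}$), so taking $V_1=\{v\}$, $V_2=\emptyset$ in Lemma~\ref{sub_cap_body_must_be_illumnated} (whose hypothesis holds because $\mathbb{B}^2$ is smooth) and combining with Theorem~\ref{main_smooth_two_dim} gives $I^m(\widehat{\mathbb{B}^2}(v))\ge I^m(\mathbb{B}^2)=2m+1$; this is the bound already recorded just before Lemma~\ref{contained_in_polygon}. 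For the upper bound (assume $m\ge 2$ for now), I would produce a convex $(2m+1)$-gon $P$ that is circumscribed about $\mathbb{B}^2$, contains $v$, and satisfies the exterior-angle hypothesis of Lemma~\ref{convex_2m_1_polygon}; then Lemma~\ref{contained_in_polygon}, applied to the cap body $\widehat{\mathbb{B}^2}(v)\subset P$ (legitimate since $\mathbb{B}^2\subset P$, $v\in P$, $P$ convex), yields $I^m(\widehat{\mathbb{B}^2}(v))=2m+1$ outright.

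To build $P$, normalise so that $\mathbb{B}^2$ is centred at the origin and $v=(0,r)$ with $r>1$, and set $\eta=\arcsin(1/r)\in(0,\tfrac\pi2)$. Let $t_1,t_2$ be the two points of $\partial\mathbb{B}^2$ at which the tangent lines through $v$ touch $\mathbb{B}^2$; these are the endpoints of the arc $\Gamma\subset\partial\mathbb{B}^2$ of points not illuminated by $v$, which has angular width $\pi+2\eta$. I take $P$ to be the intersection of the $2m+1$ closed half-planes bounded by: the tangent line at $t_1$, the tangent line at $t_2$, and the tangent lines at $2m-1$ further points chosen on $\Gamma$. Each such half-plane contains $\mathbb{B}^2$, and also contains $v$ (for $p\in\Gamma$ one has $\langle v,p\rangle\le 1$), so $\widehat{\mathbb{B}^2}(v)\subset P$; and $P$ is a genuine convex $(2m+1)$-gon once all the gaps below are positive and smaller than $\pi$. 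The side on the tangent line at $t_i$ passes through $v$, so $v$ is a vertex of $P$, with exterior angle $T:=\pi-2\eta$ there. Writing $g_1,\dots,g_{2m}$ for the successive angular gaps along $\Gamma$ between consecutive tangent points (so $\sum_i g_i=\pi+2\eta$, and these gaps are the remaining exterior angles of $P$), I place the $2m-1$ interior tangent points so that $g_1=\dots=g_{m-1}=g_{m+2}=\dots=g_{2m}=x$ and $g_m=g_{m+1}=y$, where $x$ is any number in the (nonempty) interval $\bigl(\tfrac{2\eta}{m},\tfrac{2\eta}{m-1}\bigr)$ and $y=\tfrac12\bigl(\pi+2\eta-(2m-2)x\bigr)$; then $y>\tfrac12(\pi-2\eta)>0$.

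It remains to check that every sum of $m$ consecutive exterior angles of $P$ is $<\pi$. Because of the symmetry of the chosen gaps there are only three distinct such sums, namely $(m-1)x+y=\tfrac12(\pi+2\eta)<\pi$; $(m-2)x+2y=\pi+2\eta-mx<\pi$ (using $x>\tfrac{2\eta}{m}$); and $(m-1)x+T=(m-1)x+\pi-2\eta<\pi$ (using $x<\tfrac{2\eta}{m-1}$). For $m=1$ the construction degenerates to a triangle $P$ circumscribed about $\mathbb{B}^2$ with $v$ as a vertex; since a triangle is not a parallelogram, $I^1(P)=3$, and Lemma~\ref{illumination_number_of_K_contained_in_polygon} then gives $I^1(\widehat{\mathbb{B}^2}(v))\le 3$, which with the lower bound finishes that case. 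The step I expect to be the real obstacle is ensuring the exterior-angle condition holds for \emph{all} $v\notin\mathbb{B}^2$: when $v$ is far from $\mathbb{B}^2$ the angle $\eta$ is tiny and the exterior angle $T$ at $v$ is close to $\pi$, so a regular (or nearly regular) circumscribed $(2m+1)$-gon already fails the condition because a window of $m$ consecutive angles containing $T$ overshoots $\pi$; the fix is exactly the asymmetric placement above — crowding the tangent points near $t_1$ and $t_2$ so the gaps adjacent to $v$ shrink, at the cost of two larger gaps $g_m,g_{m+1}$ in the middle of $\Gamma$ — and the slightly delicate part is verifying that the window $\bigl(\tfrac{2\eta}{m},\tfrac{2\eta}{m-1}\bigr)$ for $x$ really does make all three sums above strictly less than $\pi$, uniformly in $r>1$.
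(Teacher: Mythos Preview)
Your proposal is correct and follows essentially the same approach as the paper: construct a convex $(2m+1)$-gon circumscribed about $\mathbb{B}^2$ having $v$ as a vertex, verify the exterior-angle condition of Lemma~\ref{convex_2m_1_polygon}, and conclude via Lemma~\ref{contained_in_polygon}. In fact your construction is the same polygon as the paper's, just reparameterised: writing $\alpha=\pi-2\eta$ for the exterior angle at $v$, the paper chooses $\varepsilon\in\bigl(0,\tfrac{\pi-\alpha}{m}\bigr)=\bigl(0,\tfrac{2\eta}{m}\bigr)$ and sets the small gaps to $\tfrac{\pi-\alpha-\varepsilon}{m-1}=\tfrac{2\eta-\varepsilon}{m-1}$ and the two middle gaps to $\tfrac{\alpha}{2}+\varepsilon$; the substitution $x=\tfrac{2\eta-\varepsilon}{m-1}$ recovers exactly your $x\in\bigl(\tfrac{2\eta}{m},\tfrac{2\eta}{m-1}\bigr)$ and $y=\tfrac12(\pi+2\eta-(2m-2)x)=\tfrac{\alpha}{2}+\varepsilon$, and the three inequalities you check are the three the paper checks.
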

\begin{proof}
	If $m=1$, the result is obvious. Now we assume that $m\geq 2$. Denote by $\alpha$ the radian measure of exterior angle of $\widehat{\mathbb{B}^2}(v)$ at the point $v$. Choose a positive real number $\varepsilon$ such that $\varepsilon<\frac{\pi-\alpha}{m}$. Let $p_1=v$. Then one can construct a convex $(2m+1)$-sided polygon $P=p_1p_2\cdots p_{2m+1}$ circumscribed about $\mathbb{B}^2$ with exterior angle $\alpha_i$ at the vertex $p_i$ for $i=1,2,\ldots,2m+1$, where
	$$\alpha_1=\alpha, ~\alpha_2=\alpha_3=\cdots=\alpha_m=\frac{\pi-\alpha-\varepsilon}{m-1}$$
	and
	$$\alpha_{m+1}=\alpha_{m+2}=\frac\alpha2+\varepsilon, ~\alpha_{m+3}=\cdots=\alpha_{2m+1}=\frac{\pi-\alpha-\varepsilon}{m-1}.$$
	Observe that
	$$\alpha+(m-1)\cdot \frac{\pi-\alpha-\varepsilon}{m-1}=\pi-\varepsilon<\pi,$$
	$$(m-1)\cdot \frac{\pi-\alpha-\varepsilon}{m-1}+\frac\alpha2+\varepsilon=\pi-\frac\alpha2<\pi,$$
	and
	$$(m-2)\cdot \frac{\pi-\alpha-\varepsilon}{m-1}+2\left(\frac\alpha2+\varepsilon\right)=\frac{(m-2)\pi+\alpha+m\cdot\varepsilon}{m-1}<\pi.$$
	Therefore, $P$ satisfies the condition in Lemma \ref{convex_2m_1_polygon}. By our construction, it is obvious that $\widehat{\mathbb{B}^2}(v)\subset P$. From Lemma \ref{contained_in_polygon}, one obtains the desired result.
\end{proof}
\begin{rem}
	When the number of elements in $V$ is more than $1$, the calculation of $I^m\left(\widehat{\mathbb{B}^2}(V)\right)$ becomes more complicated and might be related to the results on convex polygon cases.
\end{rem}

Now we consider the $m$-fold illumination number of a cap body $K=\widehat{\mathbb{B}^3}(V)$, where $V$ is a finite subset of $\mathbb{E}^3\setminus\mathbb{B}^3$ such that $V$ satisfies the condition in Lemma \ref{spiky_convex_condition}. From Lemma \ref{sub_cap_body_must_be_illumnated} and Theorem \ref{main_lower_bound}, we have that
$$I^m(K)\geq I^m(\mathbb{B}^3)\geq 2m+2.$$
 One may observe that for any $v\in V$, the closed cap $\overline{C}(\mathbb{B}^3,v)$ is a closed spherical cap on the boundary of $\mathbb{B}^3$. It is not hard to show that a direction $u$ illuminates $v\in V$ with respect to $\widehat{\mathbb{B}^3}(V)$ if and only if $u$ illuminates $\overline{C}(\mathbb{B}^3,v)$ with respect to $\mathbb{B}^3$. Furthermore, for any $v,v'\in V$ where $v\neq v'$, if the sum of the spherical radii of $\overline{C}(\mathbb{B}^3,v)$ and $\overline{C}(\mathbb{B}^3,v')$ is not less than $\frac{\pi}{2}$, then $\overline{C}(\mathbb{B}^3,v)$ and $\overline{C}(\mathbb{B}^3,v')$ cannot be simultaneously illuminated by any direction with respect to $\mathbb{B}^3$. It follows, from Lemma \ref{illuminate_closed_cap}, that $v$ and $v'$ cannot be simultaneously illuminated by any direction with respect to $\widehat{\mathbb{B}^3}(V)$.  Ivanov\cite{Ivanov} proved that the  illumination number of a centrally symmetric cap body of $\mathbb{B}^3$ is at most 6.
This implies that $$I^m(K)\leq 6m,$$ when $K$ is a centrally symmetric cap body of $\mathbb{B}^3$. Furthermore, if $(e_1,e_2,e_3)$ is an orthonormal basis for $\mathbb{E}^3$, then for $i=1,2,3$, the spherical radius of $\overline{C}(\mathbb{B}^3,\pm\sqrt{2}e_i)$ is equal to $\frac{\pi}{4}$. It follows that
$$I^m\left(\widehat{\mathbb{B}^3}(\{\pm\sqrt{2}e_1,\pm\sqrt{2}e_2,\pm\sqrt{2}e_3\})\right)=6m.$$ 
In general, we have two lemmas below.
\begin{lem}\label{illumination_number_of_cap_body_regular_top_only}
Let $V=\{q_1,q_2,...,q_{n},q_{n+1}\}$, where
$$q_i=\left(\sec\frac{\pi}{n}\cos\frac{2i\pi}{n},\sec\frac{\pi}{n}\sin\frac{2i\pi}{n},0\right),$$
for $i=1,2,\ldots,n$ and
$$q_{n+1}=\left(0,0,\sec\frac{(n-2)\pi}{2n}\right).$$
Then
$$I^m\left(\widehat{\mathbb{B}^3}(V)\right)=m+\left\lceil\frac{mn}{\left\lfloor\frac{n-1}{2}\right\rfloor}\right\rceil.$$
\end{lem}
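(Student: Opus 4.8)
The statement asserts an exact value, so I would prove the matching lower and upper bounds separately, using throughout the reduction already recorded in this subsection: for a cap body of $\mathbb{B}^3$, a direction $u$ illuminates a spike apex $v\in V$ (with respect to the cap body) if and only if $u$ illuminates the closed spherical cap $\overline{C}(\mathbb{B}^3,v)$ with respect to $\mathbb{B}^3$, together with the elementary criterion that $u\in\mathbb{S}^2$ illuminates $p\in\mathbb{S}^2$ iff $\langle u,p\rangle_3<0$. First I would identify the caps: since $|q_i|=\sec\frac{\pi}{n}$ for $i\le n$ and $|q_{n+1}|=\sec\frac{(n-2)\pi}{2n}$, the cap $\overline{C}(\mathbb{B}^3,q_i)$ is the closed spherical cap of angular radius $\frac{\pi}{n}$ centred at the equatorial point $\big(\cos\frac{2i\pi}{n},\sin\frac{2i\pi}{n},0\big)$ for $i\le n$, while $\overline{C}(\mathbb{B}^3,q_{n+1})$ is the cap of angular radius $\frac{\pi}{2}-\frac{\pi}{n}$ centred at the north pole. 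Also note $u$ illuminates the whole cap of radius $\rho$ about $c$ exactly when $-u$ lies within spherical distance $\frac{\pi}{2}-\rho$ of $c$.

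For the lower bound, suppose $\widehat{\mathbb{B}^3}(V)$ is $m$-fold illuminated by a multiset $U$. Each $q_i$ is illuminated $m$ times, hence each $\overline{C}(\mathbb{B}^3,q_i)$ is. For $i\le n$ the cap centres are equally spaced points of a great circle, so a short arc-counting argument (the same one underlying Lemma \ref{illumination_number_of_regular_polygon}) shows a single direction illuminates at most $\lfloor\frac{n-1}{2}\rfloor$ of the $n$ equatorial caps; double counting incidences forces at least $\lceil mn/\lfloor\frac{n-1}{2}\rfloor\rceil$ directions of $U$ to illuminate some equatorial cap. Separately $\overline{C}(\mathbb{B}^3,q_{n+1})$ needs $\ge m$ directions, and since the radii of $\overline{C}(\mathbb{B}^3,q_{n+1})$ and of any $\overline{C}(\mathbb{B}^3,q_i)$, $i\le n$, sum to exactly $\frac{\pi}{2}$ while their centres are at distance $\frac{\pi}{2}$, the two open caps prescribed for $-u$ cannot meet, so no direction illuminates both. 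These $m$ directions are therefore disjoint from the previous ones, giving $|U|\ge m+\lceil mn/\lfloor\frac{n-1}{2}\rfloor\rceil$.

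For the upper bound I would exhibit a multiset of exactly this size. Put $m$ copies of $(0,0,-1)$ into $U$: this illuminates $\overline{C}(\mathbb{B}^3,q_{n+1})$ $m$-fold (hence $q_{n+1}$ and its spike, via Lemmas \ref{illuminate_cap} and \ref{illuminate_closed_cap}) and also illuminates the open northern hemisphere $\{p_3>0\}$ $m$-fold. Set $L=\lceil mn/\lfloor\frac{n-1}{2}\rfloor\rceil$ (note $L\ge 2m+1$, since $\frac{mn}{\lfloor(n-1)/2\rfloor}>2m$) and pick a small $\lambda>0$ with $\cos\lambda\cdot\cos\!\big((\lfloor\tfrac{n-1}{2}\rfloor-1)\tfrac{\pi}{n}\big)>\sin\tfrac{\pi}{n}$, which is possible because the inequality is strict at $\lambda=0$. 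For $k=1,\dots,L$ let $u^{(k)}$ be the unit vector whose antipode $-u^{(k)}$ is the point of $\mathbb{S}^2$ at latitude $-\lambda$ and longitude $\phi_k$, where the $\phi_k$ are chosen (roughly) equally spaced around the circle in such a way that the window of $\lfloor\frac{n-1}{2}\rfloor$ consecutive equatorial caps "centred" at $\phi_k$, as $k$ runs over $1,\dots,L$, covers each of the $n$ equatorial caps at least $m$ times; such a cyclic placement exists because $\lfloor\frac{n-1}{2}\rfloor\cdot L\ge mn$. The choice of $\lambda$ makes $u^{(k)}$ illuminate every equatorial cap in its window, so every $q_i$ ($i\le n$) is $m$-fold illuminated; and since $-u^{(k)}$ sits below the equator, the criterion $\langle u^{(k)},p\rangle_3<0$ gives at once that $u^{(k)}$ illuminates every $p\in\mathbb{S}^2$ with $p_3\le 0$ whose longitude differs from $\phi_k$ by less than $\frac{\pi}{2}$ — in particular all of the southern polar region and of the "gaps" between equatorial caps. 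As the $\phi_k$ are essentially equally spaced and $L\ge 2m+1$, each such $p$ is illuminated by at least $\lfloor L/2\rfloor\ge m$ of the $u^{(k)}$. Thus every spike apex of $\widehat{\mathbb{B}^3}(V)$ is illuminated $m$-fold and every point of $\partial\mathbb{B}^3\cap\partial\widehat{\mathbb{B}^3}(V)$ is illuminated $m$-fold; a general boundary point of $\widehat{\mathbb{B}^3}(V)$ is then handled exactly as in Lemma \ref{sub_cap_body_must_be_illumnated} (using Lemmas \ref{not_on_boundary_of_spike}, \ref{boundary_of_spike_ray_not_intersect_int_K}, \ref{illuminate_cap} and the smoothness of $\mathbb{B}^3$), so $U$ $m$-fold illuminates $\widehat{\mathbb{B}^3}(V)$.

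The delicate step — the one I expect to demand the most care — is the construction in the last paragraph: the $L$ equatorial directions must simultaneously realize the combinatorial $\lfloor\frac{n-1}{2}\rfloor$-window covering of the $n$ equatorial caps (which essentially fixes their longitudes, up to the choice of windows) and sweep the entire "bare" part of $\mathbb{S}^2$ below the equator $m$ times over. Making both requirements compatible forces one to choose the tilt $\lambda$ positive but small enough that the windows stay covered while each $u^{(k)}$ still reaches past the south pole, and to place the $L$ windows so their midpoints are spread around the circle rather than clustered, so that the semicircle bound $\lfloor L/2\rfloor\ge m$ is actually available at every longitude; verifying these two properties together, with the off-by-one bookkeeping hidden in rounding the window positions to $\approx\frac{jn}{L}$, is the crux of the argument.
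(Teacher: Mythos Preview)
Your overall strategy --- lower bound by separating $q_{n+1}$ from the equatorial spikes and counting the latter via the regular-polygon argument, upper bound by $m$ copies of $(0,0,-1)$ together with $L=\lceil mn/\lfloor(n-1)/2\rfloor\rceil$ slightly tilted equatorial directions --- is exactly the paper's. The paper's upper bound is stated tersely (``for sufficiently small $\varepsilon$ the tilted family works''), so you are in effect filling in details the paper omits.

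The step you single out as the crux, however, is wrong as stated. Your claim that each $p$ with $p_3\le 0$ is illuminated by at least $\lfloor L/2\rfloor$ of the $u^{(k)}$ fails already for $n=4$, $m=1$: here $\lfloor(n-1)/2\rfloor=1$, $L=4$, the four $\phi_k$ sit at the cap centres $0,\tfrac{\pi}{2},\pi,\tfrac{3\pi}{2}$, and the open semicircle $(-\tfrac{\pi}{2},\tfrac{\pi}{2})$ about $\theta=0$ contains only one of them, not $\lfloor L/2\rfloor=2$. In general the window-covering forces the $\phi_k$ to be aligned with blocks of cap centres rather than equispaced at $2\pi k/L$, so no uniform semicircle bound beyond $m$ is available.

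Fortunately this whole step is unnecessary. The closed equatorial caps $\overline{C}(\mathbb{B}^3,q_j)$, $j\le n$, have angular radius $\pi/n$ with centres $2\pi/n$ apart, so they cover the entire equator. Since your $u^{(k)}$'s $m$-fold illuminate each $q_j$, Lemma~\ref{illuminate_closed_cap} (with $\mathbb{B}^3$ smooth) makes each $\overline{C}(\mathbb{B}^3,q_j)$, and hence every equatorial point, $m$-fold illuminated with respect to $\mathbb{B}^3$. Now for $p$ at longitude $\theta$ with $p_3<0$, the condition $\langle u^{(k)},p\rangle<0$ is strictly weaker than at the equatorial point of the same longitude (the extra term $p_3\sin\lambda$ is negative), so the entire closed lower hemisphere is $m$-fold illuminated automatically. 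This replaces your ``off-by-one bookkeeping'' entirely, and together with the $m$ copies of $(0,0,-1)$ for the open upper hemisphere and for $q_{n+1}$, finishes the upper bound.
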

\begin{proof}
	 For $i=1,2,\ldots,n$, if a direction $u=(x,y,z)\in\mathbb{E}^3\setminus\{(0,0,0)\}$ illuminates $q_i$ with respect to $\widehat{\mathbb{B}^3}(V)$, then by symmetry, the direction $(x,y,-z)$ also illuminates $q_i$ with respect to $\widehat{\mathbb{B}^3}(V)$. Hence, by convexity, the direction $(x,y,0)$ illuminates $q_i$ with respect to $\widehat{\mathbb{B}^3}(V)$. Clearly, we have $x^2+y^2\neq 0$. 
	 
	 Now suppose that $W$ is a multiset of directions in $\mathbb{E}^3\setminus\{(0,0,0)\}$ such that for each $i=1,2,\ldots,n$, the vertex $q_i$ is $m$-fold illuminated by $W$ with respect to $\widehat{\mathbb{B}^3}(V)$. Then, by the above discussion, there exists a multiset $W'$ of directions parallel to the $XY$ plane such that $|W'|=|W|$, and for each $i=1,2,\ldots,n$, the vertex $q_i$ is $m$-fold illuminated by $W'$ with respect to the polygon $q_1q_2\cdots q_n$. 
	 Since $q_1q_2\cdots q_n$ forms a regular convex polygon, by Lemma \ref{illumination_number_of_regular_polygon}, we have that  $$|W|=|W'|\geq \left\lceil\frac{mn}{\left\lfloor\frac{n-1}{2}\right\rfloor}\right\rceil$$ Moreover, for each $i=1,2,\ldots,n$, the sum of spherical radii of $\overline{C}(\mathbb{B}^3,q_{n+1})$ and $\overline{C}(\mathbb{B}^3,q_i)$ is equal to $\frac\pi2$. Hence, $q_{n+1}$ and $q_i$ cannot be simultaneously illuminated by any direction  with respect to $\widehat{\mathbb{B}^3}(V)$. It follows that
	$$I^m\left(\widehat{\mathbb{B}^3}(V)\right)\geq m+\left\lceil\frac{mn}{\left\lfloor\frac{n-1}{2}\right\rfloor}\right\rceil.$$
	Let $U$ be a multiset of directions parallel to the $XY$ plane such that 
	$$|U|=\left\lceil\frac{mn}{\left\lfloor\frac{n-1}{2}\right\rfloor}\right\rceil,$$
	and $q_i$ is $m$-fold illuminated by $U$ with respect to $\widehat{\mathbb{B}^3}(V)$, for $i=1,2\ldots,n$. Then there exists a sufficiently small positive number $\varepsilon$, such that $\widehat{\mathbb{B}^3}(V)$ is $m$-fold illuminated by the multiset of directions
	$$\{u+(0,0,\varepsilon)\mid u\in U\}\cup\{m\cdot (0,0,-1)\},$$
	where $m$ is the multiplicity of $(0,0,-1)$.
	
\end{proof}

Similar to Lemma \ref{illumination_number_of_cap_body_regular_top_only}, one can easily obtain the following result.
\begin{lem}\label{illumination_number_of_cap_body_regular_top_bottom}
	Let $V=\{q_1,q_2,...,q_{n},q_{n+1}q_{n+2}\}$, where
	$$q_i=\left(\sec\frac{\pi}{n}\cos\frac{2i\pi}{n},\sec\frac{\pi}{n}\sin\frac{2i\pi}{n},0\right),$$
	for $i=1,2,\ldots,n$ and
	$$q_{n+1}=\left(0,0,\sec\frac{(n-2)\pi}{2n}\right), ~q_{n+2}=\left(0,0,-\sec\frac{(n-2)\pi}{2n}\right).$$
	Then
	$$I^m\left(\widehat{\mathbb{B}^3}(V)\right)=2m+\left\lceil\frac{mn}{\left\lfloor\frac{n-1}{2}\right\rfloor}\right\rceil.$$
\end{lem}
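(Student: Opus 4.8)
\emph{Plan.} The plan is to mirror the proof of Lemma~\ref{illumination_number_of_cap_body_regular_top_only} almost verbatim, the only new feature being the second pole $q_{n+2}$. I would first record that $\overline{C}(\mathbb{B}^3,q_i)$ is an equatorial spherical cap of radius $\pi/n$ for $i\le n$, while $\overline{C}(\mathbb{B}^3,q_{n+1})$ and $\overline{C}(\mathbb{B}^3,q_{n+2})$ are the caps of radius $\frac{(n-2)\pi}{2n}$ centered at the north and the south pole. Using the translation from the paragraph preceding Lemma~\ref{illumination_number_of_cap_body_regular_top_only}, a direction $u$ illuminates $q_{n+1}$ (resp.\ $q_{n+2}$) with respect to $\widehat{\mathbb{B}^3}(V)$ precisely when $u$ lies in the open cap of radius $\pi/2-\frac{(n-2)\pi}{2n}=\pi/n$ around the south (resp.\ north) pole, and it illuminates $q_i$ precisely when $u$ lies in the open cap of radius $\pi/2-\pi/n$ around $-q_i/|q_i|$.

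\emph{Lower bound.} These three families of admissible directions are pairwise disjoint: the cap of radius $\pi/n$ around the south pole and the cap of radius $\pi/2-\pi/n$ around the equatorial point $-q_i/|q_i|$ have radii summing to $\pi/2$ and centers at distance exactly $\pi/2$, so strictness of the illumination inequalities makes them disjoint (same for the north pole), while the two polar caps, being antipodal of radius $\pi/n<\pi/2$, are disjoint as well. I would flag that this last disjointness really uses antipodality: the ``sum of spherical radii $\ge\pi/2$'' criterion used in the single-pole case does not apply to $q_{n+1},q_{n+2}$ when $n=3$. Consequently, in any multiset $U$ that $m$-fold illuminates $\widehat{\mathbb{B}^3}(V)$, the sub-multisets of directions illuminating $q_{n+1}$, illuminating $q_{n+2}$, and illuminating at least one of $q_1,\dots,q_n$ are pairwise disjoint; the first two have size at least $m$ each, and the third, by the reflection-symmetry reduction of the equatorial caps to horizontal directions together with Lemma~\ref{illumination_number_of_regular_polygon} (a horizontal direction illuminates at most $\lfloor(n-1)/2\rfloor$ of the $q_i$), has size at least $\lceil mn/\lfloor(n-1)/2\rfloor\rceil$. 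Summing gives the desired lower bound.

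\emph{Upper bound.} I would fix a horizontal multiset $U$ with $|U|=\lceil mn/\lfloor(n-1)/2\rfloor\rceil$ that $m$-fold illuminates each $q_i$ ($i\le n$) with respect to $\widehat{\mathbb{B}^3}(V)$ --- this exists exactly as in the proof of Lemma~\ref{illumination_number_of_cap_body_regular_top_only} --- and take $W=U\cup\{m\cdot(0,0,1)\}\cup\{m\cdot(0,0,-1)\}$, of size $2m+\lceil mn/\lfloor(n-1)/2\rfloor\rceil$. For each $j$, $m$-fold illumination of $q_j$ (with respect to $\widehat{\mathbb{B}^3}(q_j)$, equivalently $\widehat{\mathbb{B}^3}(V)$) yields, via Lemma~\ref{illuminate_cap} and, since $\mathbb{B}^3$ is smooth, Lemma~\ref{illuminate_closed_cap}, $m$-fold illumination of the spike $S(\mathbb{B}^3,q_j)\cap\partial\widehat{\mathbb{B}^3}(V)$ and of the closed cap $\overline{C}(\mathbb{B}^3,q_j)$; this covers the side vertices via $U$ and the two polar vertices via the $m$ copies of $(0,0,-1)$ and of $(0,0,1)$, which do illuminate $q_{n+1}$ and $q_{n+2}$. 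What remains is the set $\partial\mathbb{B}^3\cap\partial\widehat{\mathbb{B}^3}(V)$ outside every closed cap $\overline{C}(\mathbb{B}^3,q_j)$; but the closed equatorial caps $\overline{C}(\mathbb{B}^3,q_1),\dots,\overline{C}(\mathbb{B}^3,q_n)$ already tile the whole equator (consecutive ones meeting at a single point), so every remaining point has nonzero $z$-coordinate and hence lies in an open hemisphere of $\partial\mathbb{B}^3$ that is $m$-fold illuminated by the $m$ copies of $(0,0,-1)$ or of $(0,0,1)$; since such a point lies on no $\partial S(\mathbb{B}^3,q_j)$, Lemma~\ref{not_on_boundary_of_spike} transfers this illumination to $\widehat{\mathbb{B}^3}(V)$.

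\emph{Main obstacle.} As in the single-pole lemma, the delicate part is checking that $W$ leaves no boundary point of $\widehat{\mathbb{B}^3}(V)$ insufficiently illuminated; here it is actually easier, since the two disjoint bundles of $m$ vertical directions already $m$-fold cover both open hemispheres of $\partial\mathbb{B}^3$, so the small perturbation of $U$ needed in Lemma~\ref{illumination_number_of_cap_body_regular_top_only} can be dispensed with. The only other point demanding care is the disjointness of the two polar admissible caps when $n$ is small, which I would settle by the direct antipodality argument above rather than by comparing sums of spherical radii.
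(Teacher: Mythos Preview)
Your proposal is correct and follows the route the paper intends: the paper gives no proof for this lemma beyond the sentence ``Similar to Lemma~\ref{illumination_number_of_cap_body_regular_top_only}, one can easily obtain the following result,'' and your argument is precisely a faithful adaptation of that proof with the second pole $q_{n+2}$ inserted. Your two additional observations---that for the disjointness of the two polar admissible regions one must argue via antipodality rather than via the ``sum of radii $\ge\pi/2$'' criterion (which fails for $n=3$), and that the $\varepsilon$-perturbation of $U$ used in Lemma~\ref{illumination_number_of_cap_body_regular_top_only} becomes unnecessary once both $(0,0,1)$ and $(0,0,-1)$ are present---are accurate refinements that a literal transcription of the single-pole proof would miss, but they do not constitute a different method.
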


\section{Conclusion}
From the above discussions and results, the following problems naturally arise.
\begin{prob}
	 Find all possible values of the $m$-fold illumination numbers of $2$-dimensional convex bodies, for any given $m$.
\end{prob}
\begin{rem}
	We know that
	$$I^m(K)=\left\lceil\frac{mn}{\left\lfloor\frac{n-1}{2}\right\rfloor}\right\rceil,$$
	for a regular convex $n$-sided polygon $K$.
\end{rem}
\begin{prob}
	Find an algorithm to determine $I^m(P)$, for any given convex polygon $P$.
\end{prob}

\begin{prob}
	Determine $I^m(\mathbb{B}^d)$, for $d\geq 3$ and $m\geq 2$.
\end{prob}
\begin{rem}
	Note that $I^m(\mathbb{B}^2)=2m+1$ and $I(\mathbb{B}^d)=d+1$.	Furthermore, by Theorem \ref{main_lower_bound} and Theorem \ref{main_ball_upper_bound}, we know that $I^2(\mathbb{B}^3)=6$.
\end{rem}

\begin{prob}
	For any $3$-dimensional convex body $K$ with smooth boundary. Is it true that $I^2(K)=6$? Is it true that $I^m(K)=I^m(\mathbb{B}^3)$?
\end{prob}
\begin{prob}
	Improve some known upper bounds of the $m$-fold illumination numbers of $d$-dimensional convex bodies $K$. 
\end{prob}

\begin{rem}
	From the results in \cite{Papadoperakis} and \cite{Prymak}, we have that for any convex body $K$ in $\mathbb{E}^3, \mathbb{E}^4, \mathbb{E}^5$, and $\mathbb{E}^6$, we have
	$$I^m(K)\leq 16m,~I^m(K)\leq 96m,~I^m(K)\leq 1091m,~I^m(K)\leq 15373m,$$
	respectively. Furthermore, Huang's result\cite{Huang} implies that for any $d$-dimensional convex body $K$,
	$$I^m(K)\leq mc_14^de^{-c_2\sqrt{d}},$$
	for some universal constants $c_1$ and $c_2$. 
\end{rem}
\newpage


\begin{thebibliography}{10}
\bibitem{Boltyanskii}
Boltyanskii, V.: The problem of illuminating the boundary of a convex body.
Izvestiya Moldavskogo Filiala Akademii Nauk SSSR. 77–84 (1960) (In Russian)

\bibitem{Gohberg}
Gohberg, I.T., Markus, A.S.: A certain problem about covering of convex sets with homothetic ones, Izv. Mold. Fil. Akad. Nauk SSSR. \textbf{10(76)}, 87–90 (1960) (in Russian)

\bibitem{Hadwiger_cover}
Hadwiger, H.: Ungel\'{o}ste Probleme Nr. 20. Elem. Math. 12, 121 (1957)

\bibitem{Hadwiger}
Hadwiger, H.: Ungel\"{o}ste Probleme Nr. 38, Elem. Math. \textbf{15}, 130–131 (1960) 

\bibitem{Levi}
Levi, F.W.: \"{U}berdeckung eines Eibereiches durch Parallelverschiebungen seines offenen Kerns, Arch. Math. \textbf{6(5)}, 369–370 (1955)

\bibitem{Papadoperakis}
Papadoperakis, I.: An estimate for the problem of illumination of the boundary
of a convex body in E3. Geom. Dedicata \textbf{75(3)}, 275–285 (1999)

\bibitem{Prymak}
Prymak, A., Shepelska, V.: On illumination of the boundary of a convex body
in En, n = 4, 5, 6. arXiv:1811.08962 [math] (2020)

\bibitem{Rogers}
Rogers, C.A.: A note on coverings. Mathematika \textbf{4(1)}, 1–6 (1957)
 
 \bibitem{Huang}
 Huang, H., Slomka, B., Tkocz, T., Vritsiou, B.-H.: Improved bounds for Hadwiger’s covering problem via thin-shell estimates. arXiv:1811.12548 [math]
(2018)

\bibitem{Hadwiger_smooth}
Hadwiger, H.: \"{U}berdeckung einer Menge durch Mengen kleineren Durchmessers. Comment. Math. Helv. \textbf{18}, 73–75 (1945)

\bibitem{Boltyanskii2}
Boltyanskii, V., Martini, H., Soltan, P.S.: Excursions into Combinatorial Geometry. Springer, Berlin (1996)

\bibitem{Minkowski}
Minkowski, H.: Volumen und Oberfl¨ache. Math. Ann. \textbf{57}, 447–495 (1903)

\bibitem{Naszodi}
Nasz\'{o}di, M.: A spiky ball. Mathematika \textbf{62(2)}, 630–636 (2016)

\bibitem{Ivanov}
Ivanov, I., Strachan, C.: On the illumination of centrally symmetric
cap bodies in small dimensions. J. Geom. \textbf{112} (2021)

\bibitem{Bezdek}
Bezdek, K., Ivanov, I., Strachan, C.: Illuminating spiky balls and cap bodies. Discrete Mathematics.
\textbf{346(1)} (2023)

\bibitem{Lassak}
Lassak, M.: Solution of Hadwiger’s covering problem for centrally symmetric
convex bodies in E3. J. Lond. Math. Soc. \textbf{s2–30(3)}, 501–511 (1984)

\end{thebibliography}
\end{document}